\patchcmd{\section}{\scshape}{\bfseries}{}{}
\renewcommand{\@secnumfont}{\bfseries}
\providecommand \@dotsep{5} \def\listtodoname{List of Todos} \def\listoftodos{\@starttoc{tdo}\listtodoname} \makeatother 
\DeclareMathOperator{\Hom}{Hom}
\theoremstyle{plain}
\newtheorem{thmA}{Theorem}
\newtheorem{theorem}{Theorem}[section]  
\newtheorem{lemma}[theorem]{Lemma}
\newtheorem{proposition}[theorem]{Proposition}
\theoremstyle{definition}
\newtheorem{definition}[theorem]{Definition}
\newtheorem{example}[theorem]{Example}
\newtheorem{remark}[theorem]{Remark}
\newtheorem{construction}[theorem]{Construction}
\newcommand{\sets}{\mathbf{Sets}}
\newcommand{\hyperrings}{\mathbf{Hyperrings}}
\newcommand{\hyperfields}{\mathbf{Hyperfields}}
\newcommand{\parthyperrings}{\mathbf{PartHyperrings}}
\newcommand{\parthyperfields}{\mathbf{PartHyperfields}}
\newcommand{\ddhyperfields}{\mathbf{Hyperfields}_{dd}}
\newcommand{\fuzzyweak}{\mathbf{FuzzyRings}_{wk}}
\newcommand{\fuzzyweakast}{\mathbf{FuzzyRings}_{wk}^{\ast}}
\newcommand{\fuzzystrong}{\mathbf{FuzzyRings}_{str}}
\newcommand{\partialdemifields}{\mathbf{PartialDemifields}}
\newcommand{\functor}{\mathcal{F}}
\newcommand{\invfunctor}{\mathcal{G}}
\newcommand{\reducedfunctor}{\overline{\mathcal{F}}}
\newcommand{\demifunctorA}{\mathcal{F}_1}
\newcommand{\demifunctorB}{\mathcal{F}_2}
\begin{document}

\title{On the relation between hyperrings and fuzzy rings}
\author{Jeffrey Giansiracusa}
\address{Department of Mathematics, Swansea University, Singleton Park, Swansea SA2 8PP, UK}
\email{j.h.giansiracusa@swansea.ac.uk}

\author{Jaiung Jun}
\address{Department of Mathematical Sciences, Binghamton University, Binghamton, NY 13902, USA}
\email{jjun@math.binghamton.edu}

\author{Oliver Lorscheid}
\address{Instituto Nacional De Matem\'{a}tica Pura A Aplicada, Estrada Dona Castorina 110, Rio De Janeiro, Brazil}
\email{oliver@impa.br
}

\subjclass[2010]{14A99(primary), 14T99(secondary)}

\keywords{hyperfield, matroid, tropical geometry, fuzzy ring}

\date{\today}

\begin{abstract}
  We construct a full embedding of the category of hyperfields into Dress's category of fuzzy rings
  and explicitly characterize the essential image --- it fails to be essentially surjective in a
  very minor way.  This embedding provides an identification of Baker's theory of matroids over
  hyperfields with Dress's theory of matroids over fuzzy rings (provided one restricts to those
  fuzzy rings in the essential image).  The embedding functor extends from hyperfields to
  hyperrings, and we study this extension in detail.  We also analyze the relation between
  hyperfields and Baker's partial demifields.
\end{abstract}

\maketitle

\section{Introduction}

The important and pervasive combinatorial notion of matroids has spawned a number of variants over
the years.  In \cite{dress1986duality} and \cite{dress1991grassmann}, Dress and Wenzel developed a
unified framework for these variants by introducing a generalization of rings called \emph{fuzzy
  rings} and defining matroids with coefficients in a fuzzy ring.  Various flavors of matroids,
including ordinary matroids, oriented matroids, and the valuated matroids introduced in
\cite{dress1992valuated}, correspond to different choices of the coefficient fuzzy ring. 

Roughly speaking, a fuzzy ring is a set $S$ with single-valued unital addition and multiplication
operations that satisfy a list of conditions analogous to those of a ring, such as distributivity,
but only up to a tolerance prescribed by a distinguished ideal-like subset $S_0$.  Beyond the work
of Dress and Wenzel, fuzzy rings have not yet received significant attention in the literature.  A
somewhat different generalization of rings, known as \emph{hyperrings}, has been around for many
decades and has been studied very broadly in the literature.  Roughly, a hyperring is a set $R$ with
a single-valued multiplication operation $\times_R$ and a multi-valued addition operation $+_R$
satisfying a list of conditions that are analogous to the defining conditions of a ring.  A
\emph{hyperfield} is a hyperring in which the multiplicative monoid of nonzero elements is a group.

In the recent elegant paper \cite{baker2016matroids}, Baker defined and studied matroids with
coefficients in a hyperfield.  As with fuzzy rings, many common flavors of matroids correspond to
appropriate choices of coefficients.  However, beyond this, Baker shows that hyperfields provide a
compelling setting for matroid theory --- one to which duality theory and many of the most common
cryptomorphic presentations of matroids all extend (including the circuit and dual pair axioms,
which are absent in the work of Dress and Wenzel), and one which is built on arguably simpler and
more familiar algebraic structures.

In view of these two approaches toward a unified theory of matroids, a natural question to ask is
how hyperrings and fuzzy rings are related, and how their corresponding theories of matroids are
related.  These are precisely the questions we set out to address in this paper.  The full story is
somewhat messy, and one of the purposes of this paper is to map the terrain.  However, at the center
of things there is a clean statement: \textbf{there is a fully faithful functor from hyperfields to
  fuzzy rings, and it fails to be essentially surjective in a very mild way that we explicitly
  describe.  Moreover, this embedding induces an identification between matroids over a hyperfield
  and matroids over the corresponding fuzzy ring.}

The ideas behind this result follow from three observations: (1) the multi-valued addition operation
of a hyperring can be viewed as a single-valued operation taking values in the powerset of the
hyperring, but the cost of this is that the resulting object is a fuzzy ring rather than a ring, (2)
one of Dress's notions of morphisms for fuzzy rings is quite flexible and provides a category in which
distinct fuzzy rings can nevertheless be isomorphic, and (3) the definition of matroids over fuzzy
rings sees only the multiplicative units.

\subsection{Statement of results in detail}

We will consider the category $\hyperrings$ of hyperrings and the subcategory $\hyperfields$
consisting of hyperfields.  On the other side, fuzzy rings come with two distinct notions of
morphisms, originally called \emph{morphisms} and \emph{homomorphisms} by Dress, but we prefer to
call them \emph{weak morphisms} and \emph{strong morphisms} as they have less potential for
confusion, and we will denote the corresponding categories of fuzzy rings by $\fuzzyweak$ and
$\fuzzystrong$.  
Since strong morphisms of fuzzy rings restrict to weak morphisms, we obtain a functor $\fuzzystrong \to \fuzzyweak$, which turns out to be neither full nor faithful. All of these definitions will be reviewed in section \ref{sec:review}.

\subsubsection{Main results}

\begin{thmA}\label{thm:main-result}
  There is fully faithful functor
  \[
  \functor:\hyperfields \hookrightarrow \fuzzyweak,
\]
that commutes with the respective forgetful functors to abelian groups given by sending a hyperfield
and a fuzzy ring to its multiplicative group of units, i.e., there are natural isomorphisms
$F^\times\cong\functor(F)^\times$.  Moreover, the essential image consists of those fuzzy rings
$(K,+,\times,\epsilon, K_0)$ such that for each pair of units $a,b\in K^\times$, there exists $c\in
K^\times \cup \{0\}$ such that $a+b+c \in K_0$.
\end{thmA}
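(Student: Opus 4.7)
The plan is to construct $\functor$ via a ``powerset fuzzy ring'' and then identify the essential image by an explicit inverse construction on fuzzy rings satisfying the given condition.

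For a hyperfield $F$ with hyperaddition $\oplus$, I would take $\functor(F)$ to have underlying set $\mathcal{P}^{\ast}(F)$ (the nonempty subsets of $F$), with single-valued addition $A + B := \bigcup_{a \in A,\, b \in B} a \oplus b$, pointwise multiplication $A \cdot B := \{ab : a \in A,\, b \in B\}$, distinguished element $\epsilon := \{-1\}$, and distinguished subset $K_0 := \{A : 0 \in A\}$. The fuzzy ring axioms then translate directly from the associativity, commutativity, and distributivity of $\oplus$. Since $A \cdot B = \{1\}$ forces both $A$ and $B$ to be singletons, the units of $\functor(F)$ are exactly the singletons $\{a\}$ for $a \in F^\times$, yielding the natural bijection $F^\times \cong \functor(F)^\times$. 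On morphisms, a hyperfield map $f: F \to F'$ induces the weak morphism $\functor(f): A \mapsto f(A)$.

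Faithfulness is immediate, since $f$ is recovered from $\functor(f)$ by restriction to singletons. For fullness, given a weak morphism $\phi: \functor(F) \to \functor(F')$, its restriction to units together with $\{0\} \mapsto \{0\}$ defines a map $f: F \to F'$; I would use the weak morphism axioms applied to $\phi(\{a\} + \{b\}) = \phi(a \oplus b)$ to force $f$ to respect the hyperoperation, and then $\phi = \functor(f)$ would follow from agreement on singletons. For the essential image, necessity is direct: given units $\{a\}, \{b\}$ in $\functor(F)$ with $a, b \in F^\times$, any element $c$ in the hypernegative of $a \oplus b$ lies in $F^\times \cup \{0\}$ and satisfies $\{a\} + \{b\} + \{c\} \in K_0$.

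Sufficiency is the substantive half. Given a fuzzy ring $K$ satisfying the stated condition, I would define a hyperfield structure on $F := K^\times \cup \{0\}$ with hyperaddition
\[
a \oplus b := \{\, c \in F : a + b + \epsilon c \in K_0 \,\},
\]
verify the hyperfield axioms, and construct an isomorphism $\functor(F) \cong K$ in $\fuzzyweak$. The principal obstacle is proving associativity and reversibility of $\oplus$: these translate into nontrivial reshuffling of conditions of the form $x + y + \epsilon z \in K_0$, and must be extracted from the comparatively loose distributivity and closure axioms of fuzzy rings. A related subtlety is that the comparison $\functor(F) \to K$ need not be a strong morphism on non-singleton subsets, so one must work in $\fuzzyweak$ throughout and make essential use of the flexibility of weak morphisms.
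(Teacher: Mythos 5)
Your construction is exactly the paper's: the powerset fuzzy ring $\functor(F)$ on $\mathcal{P}^{\ast}(F)$ with units the singletons over $F^\times$, faithfulness via restriction to singletons, necessity of the field-like condition from hypernegatives, and the quasi-inverse $\invfunctor(K)=K^\times\cup\{0\}$ with $a\oplus b=\{c : a+b+\epsilon c\in K_0\}$, whose hyperfield axioms (including the associativity you flag, handled via $(a\oplus b)\oplus c=\{d : a+b+c+\epsilon d\in K_0\}$) and the weak isomorphism $\functor(\invfunctor(K))\cong K$ are verified just as you outline. One small repair in your fullness step: a weak morphism $\phi$ is defined only on units, so the expression $\phi(\{a\}+\{b\})=\phi(a\oplus b)$ is ill-typed; the paper instead uses reversibility --- $c\in a\oplus b$ iff $\{a\}+\{b\}+\{-c\}$ is null, a sum of three units to which the weak-morphism condition applies, yielding $f(c)\in f(a)\oplus f(b)$.
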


In \cite{dress1991grassmann}, Dress and Wenzel showed that the original definition of matroids with
coefficients in a fuzzy ring is cryptomorphic to a fuzzy ring version of the Grassmann-Pl\"ucker
axiom system.  Sending a hyperfield $F$ to the set of rank $r$ matroids on a finite set $E$ with
coefficients in $F$ (in the sense of \cite[\S 3]{baker2016matroids}), and using the pushforward
construction on morphisms, defines a functor
\[
\mathscr{M}_{E,r}^{\textit{hyp}}: \hyperfields \to \sets.
\]
Similarly, sending a fuzzy ring $K$ to the set of rank $r$ matroids on $E$ with coefficients in $K$
defines a functor
\[
\mathscr{M}_{E,r}^{\textit{fuzzy}}: \fuzzyweak \to \sets,
\]
(\textit{a priori} $\mathscr{M}_{E,r}^{\textit{fuzzy}}$ is defined on $\fuzzystrong$, but from the perspective of
Grassmann-Pl\"ucker functions it is immediate that it factors through $\fuzzystrong \to \fuzzyweak$).

\begin{thmA}\label{thm:matroids-result}
The diagram
\[
\begin{diagram}
\node{\hyperfields} \arrow{se,b}{\mathscr{M}_{E,r}^{\textit{hyp}}} \arrow[2]{e,t}{\functor} \node[2]{\fuzzyweak} \arrow{sw,b}{\mathscr{M}_{E,r}^{\textit{fuzzy}}} \\
\node[2]{\sets}
\end{diagram}
\]
commutes up to a natural isomorphism implemented by applying the identification
$F^\times \cong \functor(F)^\times$ to the components of the Grassmann-Pl\"ucker function.  In
particular, there is a bijection between matroids over a hyperfield $F$ and matroids over the
corresponding fuzzy ring $\functor(F)$.
\end{thmA}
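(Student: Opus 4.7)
The plan is to establish the natural isomorphism pointwise, by comparing the two Grassmann--Pl\"ucker axiom systems directly, and then checking naturality. First, I would write down the explicit rank-$r$ Grassmann--Pl\"ucker (GP) axiom system in each setting: over a hyperfield $F$, in the sense of Baker, a GP function is an alternating, nonzero map $\varphi\colon E^r \to F$ such that for every $(x_1,\dots,x_{r-1})$ and $(y_1,\dots,y_{r+1})$ the standard $(r+1)$-term Pl\"ucker hypersum contains $0$; for a fuzzy ring $K$, in the sense of Dress--Wenzel, a GP function $\psi\colon E^r \to K^\times \cup \{0\}$ is alternating (with respect to the distinguished element $\epsilon$) and satisfies the analogous $(r+1)$-term sum lying in $K_0$.

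Next, I would invoke the construction of $\functor$ from Section~\ref{sec:review}: by design, addition in $\functor(F)$ records the hypersum of $F$, the element $\epsilon$ of $\functor(F)$ corresponds to $-1 \in F^\times$ under the natural isomorphism $F^\times \cong \functor(F)^\times$, and an element of $\functor(F)$ lies in $\functor(F)_0$ precisely when the hypersum it represents contains $0$. Combined with the identification $F^\times \cup \{0\} = \functor(F)^\times \cup \{0\}$, these three points show that the hypersum-based Pl\"ucker relation of Baker translates to the fuzzy-ring $K_0$-based Pl\"ucker relation of Dress--Wenzel, and conversely. This yields, for each hyperfield $F$, a bijection between rank-$r$ GP functions over $F$ and over $\functor(F)$, compatible with the scalar-rescaling equivalence, and hence a bijection on the corresponding matroid sets.

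Naturality is then a short check. A morphism $f\colon F \to F'$ of hyperfields acts on GP functions by postcomposition, and $\functor(f)$ agrees with $f$ on units and sends $0$ to $0$, so it acts by the same postcomposition on GP functions over the associated fuzzy rings. The commutativity of the diagram up to natural isomorphism follows by inspection.

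The main obstacle I expect is bookkeeping: carefully aligning the axiom system of \cite{dress1991grassmann}, stated in terms of $K_0$ with the element $\epsilon$ playing the role of $-1$, with the hypersum axiom of \cite{baker2016matroids}. The essential-image characterization of Theorem~\ref{thm:main-result} contributes here by guaranteeing that every two-element hypersum in $F$ is witnessed by some $c \in \functor(F)^\times \cup \{0\}$ with $a + b + c \in \functor(F)_0$, which is what allows the translation to proceed without losing information in either direction. A minor additional point is to verify that Dress--Wenzel's non-triviality and alternating conditions correspond to Baker's; this should be routine given the compatibility of $\epsilon$ with $-1$ and the fact that fuzzy-ring GP functions are required to take values in $K^\times \cup \{0\}$.
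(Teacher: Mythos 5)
Your proposal is correct and follows essentially the same route as the paper: the paper's Proposition \ref{onetoone} establishes exactly the pointwise translation you describe, by observing that (GPH1)--(GPH2) match (GPF1)--(GPF2) because $\epsilon=\{-1_F\}$, and that (GPH3) matches (GPF3) because the sum of singletons in $\functor(F)$ is literally the hypersum in $F$ and an element of $\functor(F)$ is null precisely when it contains $0_F$. One small remark: the field-like/essential-image condition from Theorem~\ref{thm:main-result} is not actually needed here --- for a hyperfield $F$ the fuzzy ring $\functor(F)$ is automatically field-like, and the GP translation relies only on the description of $\functor(F)_0$, not on that condition.
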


\subsubsection{From hyperrings to fuzzy rings}
The functor giving the the embedding of Theorem \ref{thm:main-result} can be extended from
hyperfields to hyperrings, and it can be refined to take values in strong morphisms rather than just
weak morphisms. I.e., we actually have a functor (which we denote by the same symbol)
\[
\functor: \hyperrings \to \fuzzystrong
\]
that generalizes a construction of Dress that embeds the category of (ordinary) rings into fuzzy
rings. 

Let $(R,+,\times)$ be a hyperring.  The fuzzy ring $\functor(R)$ has the nonempty subsets of $R$ as its
elements, multiplication operation given by
\[
A \times_{\functor(R)} B = \{a \times b \mid a\in A \text{ and } b\in B\},
\]
for subsets $A,B\subset R$, and addition operation given by $A +_{\functor(R)} B  = \bigcup_{a\in A,
  b\in B} a + b$.   Note that sending $x \in R$ to the singleton $\{x\}\in \functor(R)$ gives an injective
multiplicative map $R\to \functor(R)$ that turns out to be an isomorphism on units. 

Going beyond Theorem \ref{thm:main-result}, we study the properties of the refined/extended version
of $\functor$.

\begin{thmA} 
  The functor $\functor: \hyperrings \to \fuzzystrong$ has the following properties.
\begin{enumerate}
\item It is faithful but neither full nor essentially surjective, and the same is true for its
  restriction to the full subcategory $\hyperfields$.
\item The composition 
\[\hyperrings \stackrel{\functor}{\to} \fuzzystrong \to \fuzzyweak
\]
 is faithful but not full. 
\end{enumerate}
\end{thmA}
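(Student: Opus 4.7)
The functor $\functor$ sends a hyperring morphism $f: R \to R'$ to the map $A \mapsto f(A)$ on nonempty subsets; this underlying set map is the same whether $\functor(f)$ is viewed as a strong morphism or as a weak morphism, and on singletons it recovers $f$ via $\{x\} \mapsto \{f(x)\}$. Hence $\functor(f) = \functor(g)$ in either $\fuzzystrong$ or $\fuzzyweak$ forces $f = g$, establishing both faithfulness claims.

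\textbf{Non-essential surjectivity for (1).} Theorem \ref{thm:main-result} identifies the essential image of $\functor: \hyperfields \to \fuzzyweak$ as those fuzzy rings in which every pair $a, b \in K^\times$ admits some $c \in K^\times \cup \{0\}$ with $a+b+c \in K_0$. I would exhibit any fuzzy ring violating this condition; since every isomorphism in $\fuzzystrong$ descends to one in $\fuzzyweak$, such a fuzzy ring also lies outside the essential image of $\functor$ in $\fuzzystrong$, and the same example covers both the $\hyperfields$ and $\hyperrings$ versions.

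\textbf{Non-fullness.} For (1): Theorem \ref{thm:main-result} says the composition $\hyperfields \to \fuzzystrong \to \fuzzyweak$ is fully faithful, so every weak morphism $\functor(F) \to \functor(F')$ equals $\functor(f)$ for a unique $f$. Since $\fuzzystrong \to \fuzzyweak$ is not faithful, there exist distinct strong morphisms $\phi_1 \ne \phi_2 : \functor(F) \to \functor(F')$ with the same underlying weak morphism $\functor(f)$. If both lay in the image of $\functor$, the unique factorization from Theorem \ref{thm:main-result} would force $\phi_1 = \functor(f) = \phi_2$, a contradiction; hence at least one $\phi_i$ witnesses non-fullness. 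A natural candidate to realize this concretely is $F = F' = \mathbb{K}$, the Krasner hyperfield. For (2): since the restriction to $\hyperfields$ is already fully faithful, non-fullness must come from genuine hyperrings. I would construct a weak morphism $\phi: \functor(R) \to \functor(R')$ sending some singleton $\{x\}$ to a non-singleton subset of $R'$; since $\functor(h)$ always preserves singletons, such $\phi$ is automatically outside the image of $\functor$, and the leniency of the weak-morphism axioms --- preservation of addition only modulo $K_0$ --- should enable the construction.

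\textbf{Main obstacle.} The principal technical difficulty is producing the explicit counterexample morphisms for non-fullness: exhibiting two distinct strong morphisms with the same underlying weak morphism, and producing a weak morphism sending a singleton to a non-singleton. The faithfulness and non-essential surjectivity claims follow formally from Theorem \ref{thm:main-result} combined with the definition of $\functor$, but the non-fullness examples require a careful unpacking of the distinction between the strong and weak morphism axioms from Section \ref{sec:review}.
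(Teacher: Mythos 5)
Your faithfulness argument for $\functor:\hyperrings\to\fuzzystrong$ matches the paper's (evaluate on singletons), but the rest of the proposal has several genuine gaps, most of them traceable to a misreading of what the functor $\fuzzystrong\to\fuzzyweak$ retains. A weak morphism out of $\functor(R)$ is by definition only a group homomorphism $\functor(R)^\times\to\functor(R')^\times$, and by Lemma \ref{units-are-singletons} every unit of $\functor(R)$ is a singleton $\{u\}$ with $u\in R^\times$. Consequently: (a) your faithfulness argument for part (2) only recovers $f$ on $R^\times$, not on non-units, so it proves faithfulness of the composite on $\hyperfields$ but not on $\hyperrings$ as written; and (b) your plan for non-fullness of the composite --- ``a weak morphism sending a singleton to a non-singleton'' --- describes something that cannot exist, since weak morphisms send units to units and all units are singletons. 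The paper's actual witness (Example \ref{notfullhyperrings}) is of a different nature: it takes $R=\mathbb{K}[H]\cup\{e,f\}$ and $L=\mathbb{K}[H]$ with $R^\times\cong L^\times\cong H$, and shows that the identity of $H$ is a weak morphism $\functor(R)\to\functor(L)$ which no hyperring homomorphism can induce, because any such homomorphism must kill $e$ and $f$ while $1\in e+f$ forces $0\in g(1)+g(e)+g(f)=\{1\}$.

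Two further gaps. For essential surjectivity, the field-like criterion of Theorem \ref{thm:main-result} characterizes the essential image of \emph{hyperfields} only; a non-field-like fuzzy ring can perfectly well equal $\functor(R)$ for a hyperring $R$ --- indeed $\functor(\mathbb Z)$ is not field-like --- so ``the same example covers both the $\hyperfields$ and $\hyperrings$ versions'' is false, and the $\hyperrings$ case needs a separate argument (for instance: a strong isomorphism is a bijection of underlying sets and $|\mathcal{P}^*(R)|=2^{|R|}-1$, so the four-element fuzzy ring of Example \ref{signfuzzy} is not isomorphic in $\fuzzystrong$ to any $\functor(R)$). For non-fullness into $\fuzzystrong$, your deduction is a non sequitur: non-faithfulness of $\fuzzystrong\to\fuzzyweak$ is witnessed by the fuzzy ring $\mathbb Z[x]$, which is not of the form $\functor(F)$, so it does not supply two distinct strong morphisms between objects in the image of $\functor$. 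Worse, your concrete candidate $F=F'=\mathbb{K}$ fails outright: writing $\functor(\mathbb{K})=\{\{0\},\{1\},\{0,1\}\}$, the null relations $\{0,1\}\times\{1\}\in K_0$ and $\{1\}\times\{1\}+\{0,1\}\times\{1\}\in K_0$ force any strong endomorphism to send $\{0,1\}$ to $\{0,1\}$, so the identity is the only strong endomorphism. The paper's Example \ref{notfullexample} instead maps $\functor(\mathbb{K})$ into $\functor(\mathbb{Q}(x)/\mathbb{Q}^\times)$, where the target is large enough that $\{0,1\}$ may be sent either to $\{0,1\}$ or to the whole underlying set, yielding two distinct strong morphisms over the unique hyperfield homomorphism.
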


\subsubsection{Doubly-distributive hyperfields and valuative hyperfields}

A hyperfield is said to be \emph{doubly distributive} if it satisfies the condition
$(a+b)(c+d) = ac + ad + bc + bd$.  On the subcategory $\ddhyperfields \subset \hyperfields$ of
doubly-distributive hyperfields and strict morphisms (meaning the containment
$f(a+b) \subset f(a)+f(b)$ is an equality) there is a reduced version,
\[
\reducedfunctor: \ddhyperfields \to \fuzzystrong,
\]
of the functor $\functor$ studied above. Given a doubly distributive hyperfield $H$, the fuzzy ring
$\reducedfunctor(H)$ is always a sub-fuzzy ring of $\functor(H)$ (meaning that there is a strong
morphism that is set-theoretically injective), and the inclusion restricts to an isomorphism on
units, so $H^\times \cong \functor(H)^\times \cong \reducedfunctor(H)^\times$.

\begin{thmA}
  The functor $\reducedfunctor$ is faithful but neither full nor essentially surjective, as is its
  composition with the functor $\fuzzystrong \to \fuzzyweak$.  If $H$ is a doubly-distributive
  hyperfield then, as in Theorem \ref{thm:matroids-result} above, under the identification
  $H^\times \cong\reducedfunctor(H)^\times$ applied to the components of a Grassmann-Pl\"ucker
  function, there is a bijection between matroids with coefficients in $H$ and matroids with
  coefficients in the fuzzy ring $\reducedfunctor(H)$.
\end{thmA}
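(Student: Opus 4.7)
The plan is to prove the three negative/structural properties (faithfulness, failure of fullness, failure of essential surjectivity) by adapting the arguments used for $\functor$ in Theorem~\ref{thm:main-result} and in the preceding theorem on hyperrings, and to prove the matroid bijection by mimicking the proof of Theorem~\ref{thm:matroids-result}. The unifying observation is that the inclusion $\reducedfunctor(H)\hookrightarrow\functor(H)$ is an isomorphism on units, so $H^\times\cong\reducedfunctor(H)^\times\cong\functor(H)^\times$ naturally; this lets us shuttle data between the hyperfield and the fuzzy ring sides.

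For faithfulness of $\reducedfunctor$, and of its composition with $\fuzzystrong\to\fuzzyweak$, I would observe that a strict morphism of hyperfields $f:H\to H'$ is determined by its restriction to $H^\times$, because $f(0)=0$. Under $\reducedfunctor$, the induced map on units is precisely $f|_{H^\times}$ transported through the isomorphism $H^\times\cong\reducedfunctor(H)^\times$. Since any morphism of fuzzy rings, weak or strong, is multiplicative and carries units to units, it is determined by its restriction to the group of units. Hence two distinct hyperfield morphisms cannot induce the same fuzzy ring morphism, yielding faithfulness in both cases.

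For the failure of fullness and essential surjectivity, I would construct explicit counterexamples paralleling those used for $\functor$ in the preceding theorem, restricted to the doubly-distributive setting. For fullness, the task is to exhibit a pair of doubly-distributive hyperfields $H,H'$ together with a (strong or weak) morphism $\reducedfunctor(H)\to\reducedfunctor(H')$ whose restriction to units is a group homomorphism that does not extend to a hyperfield morphism, for instance because it sends some sum $a+b\subset H$ outside of $f(a)+f(b)\subset H'$. For essential surjectivity, I would locate a fuzzy ring in $\fuzzystrong$ (respectively $\fuzzyweak$) whose units and sum-of-unit structure cannot arise from any doubly-distributive hyperfield; natural candidates are obtained from $\functor$ applied to a non-doubly-distributive hyperfield, or from further thickening of the $K_0$ subset beyond what the reduced construction allows. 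The main obstacle will be verifying that the chosen source hyperfields really satisfy double distributivity and that the constructed counterexamples respect the reduced structure of $\reducedfunctor(H)$ rather than only that of the larger $\functor(H)$.

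For the matroid bijection, the argument mirrors Theorem~\ref{thm:matroids-result}. A Grassmann--Pl\"ucker function of rank $r$ on $E$ with values in $H$ takes values in $H^\times\cup\{0\}$, so via the identification $H^\times\cong\reducedfunctor(H)^\times$ it produces a candidate Grassmann--Pl\"ucker function with values in $\reducedfunctor(H)^\times\cup\{0\}$. The Grassmann--Pl\"ucker axioms are relations of the form $0\in\sum a_i b_i$ in $H$, respectively $\sum a_i b_i\in K_0$ in $\reducedfunctor(H)$. Verifying the bijection reduces to checking that these two conditions correspond under the identification on units, and this is precisely the point at which double distributivity is used: it guarantees that expansions of sums of products in $\reducedfunctor(H)$ agree, as subsets of $H$, with the multi-valued hyperfield sum, so that containment of $0$ on one side matches membership of the sum in $K_0$ on the other.
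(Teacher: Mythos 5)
Your overall architecture matches the paper's: faithfulness via the identification of unit groups, explicit counterexamples for fullness, and the Grassmann--Pl\"ucker comparison for the matroid bijection (the paper handles these via Proposition \ref{onetoone} and by reusing Examples \ref{notfullexample} and \ref{compositionnotfull}). However, your proposed mechanism for defeating fullness is one that cannot occur. You plan to exhibit a morphism $\reducedfunctor(H)\to\reducedfunctor(H')$ ``whose restriction to units \dots does not extend to a hyperfield morphism, for instance because it sends some sum $a+b$ outside of $f(a)+f(b)$.'' By Theorem \ref{thm:main-result} the functor $\functor:\hyperfields\to\fuzzyweak$ is full, and since $\reducedfunctor(H)\hookrightarrow\functor(H)$ is an isomorphism on units that preserves and reflects nullity of sums of units, the restriction to units of \emph{any} weak (hence of any strong) morphism $\reducedfunctor(H)\to\reducedfunctor(H')$ is always induced by some hyperfield homomorphism $H\to H'$. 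So no counterexample of the kind you describe exists, and a search for one would fail. The genuine failures lie elsewhere: for $\reducedfunctor:\ddhyperfields\to\fuzzystrong$, a strong morphism carries strictly more data than its restriction to units, and the paper's Example \ref{notfullexample} produces a strong morphism agreeing with $\reducedfunctor(f)$ on units but differing on a non-singleton element of $S(H)$; for the composition into $\fuzzyweak$, the point is that morphisms in $\ddhyperfields$ are required to be \emph{strict}, and the hyperfield morphism induced by a weak morphism need not be strict --- the paper's Example \ref{compositionnotfull} uses $\mathbb{Q}\to\mathbb{K}$, where a non-strict hyperfield morphism (hence a weak fuzzy-ring morphism) exists but no strict one does.

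A smaller point: you locate the role of double distributivity in the matroid bijection in making ``expansions of sums of products agree with the multi-valued hyperfield sum.'' In fact the Grassmann--Pl\"ucker sums involve only products of units and $0$, i.e.\ singletons, and such a sum and its nullity are computed identically in $\reducedfunctor(H)$, $\functor(H)$ and $H$ without any appeal to double distributivity; that hypothesis is needed only so that $S(H)$ is multiplicatively closed and $\reducedfunctor(H)$ is a fuzzy ring in the first place. With that understood, your comparison of (GPH3) with (GPF3) is exactly the paper's Proposition \ref{onetoone}, and your faithfulness argument agrees with the paper's.
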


Given a totally ordered abelian group $\Gamma$, Dress and Wenzel \cite[p.~237]{dress1992valuated}
construct a fuzzy ring $K_\Gamma$ such that matroids with coefficients in this fuzzy ring are
precisely $\Gamma$-valuated matroids.  On the other hand, there is a hyperfield $H_\Gamma$ whose
underlying set is just $\Gamma\cup \{0\}$ (see e.g., \cite[Example 2.9]{baker2016matroids}), and
this hyperfields turns out to be doubly-distributive. In \S 5, we turn to the relation between the
fuzzy ring $K_\Gamma$ and the hyperfield $H_\Gamma$.

\begin{thmA}
There is a canonical isomorphism $\reducedfunctor(H_\Gamma) \cong K_\Gamma$.
\end{thmA}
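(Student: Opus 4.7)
The plan is to unwind both constructions and exhibit a canonical bijection between their underlying sets that is compatible with multiplication, addition, the null set, and the involution $\epsilon$, and that restricts to the identity on the unit groups $\Gamma = H_\Gamma^\times = K_\Gamma^\times$.

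First I would enumerate the elements of $\reducedfunctor(H_\Gamma)$. By construction, $\reducedfunctor(H_\Gamma)$ is the sub-fuzzy ring of $\functor(H_\Gamma)$ generated by the singletons under the powerset operations. For the tropical hyperfield (with $\Gamma$ written multiplicatively and the order convention of \cite[Example 2.9]{baker2016matroids}), the hypersum of $a,b\in\Gamma$ is the singleton $\{\max(a,b)\}$ when $a\neq b$, and the ``downset'' $D_a := \{c \in \Gamma : c \leq a\}\cup\{0\}$ when $a=b$. Using the double distributivity of $H_\Gamma$ together with the absorption identities $D_a + \{b\} = D_a$ for $b\leq a$, $D_a + \{b\} = \{b\}$ for $b>a$, and $D_a + D_b = D_{\max(a,b)}$, one checks that these operations never leave the collection consisting of the singletons $\{\gamma\}$ (for $\gamma\in\Gamma\cup\{0\}$) together with the downsets $D_\gamma$ (for $\gamma\in\Gamma$). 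The units are the singletons $\{\gamma\}$ with $\gamma\in\Gamma$, and the null set is precisely $\{\{0\}\}\cup\{D_\gamma:\gamma\in\Gamma\}$, namely the subsets containing $0\in H_\Gamma$. Since $-1=1$ in $H_\Gamma$, the involution $\epsilon$ is the identity.

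Next I would match this against the description of $K_\Gamma$ in \cite[p.~237]{dress1992valuated}. That fuzzy ring has $\Gamma$ as its group of units, together with one ``sub-unit'' element for each $\gamma\in\Gamma$ (recording ``value at most $\gamma$'') and a zero, with multiplication given by the $\Gamma$-action and with addition and null set encoding the ultrametric triangle inequality. The candidate isomorphism is then forced: send $\{\gamma\}\mapsto\gamma$, $D_\gamma\mapsto \gamma_-$, and $\{0\}\mapsto 0$. Multiplicative compatibility is immediate from $\{\alpha\}\cdot D_\gamma = D_{\alpha\gamma}$, and the involution is the identity on both sides.

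The verification that addition and the null set are respected is where the bookkeeping lies, but every case reduces to one of the absorption identities listed above or to the tropical sum $\{\alpha\}+\{\beta\} = \{\max(\alpha,\beta)\}$, each of which matches the corresponding Dress--Wenzel formula term by term; in particular, a formal sum of singletons and downsets lies in $\reducedfunctor(H_\Gamma)_0$ if and only if its evaluation in $H_\Gamma$ contains $0$, which in turn matches the condition defining $K_{\Gamma,0}$. The main obstacle is the initial combinatorial step of pinning down the underlying set of $\reducedfunctor(H_\Gamma)$ precisely, so that the bijection with $K_\Gamma$ holds on the nose; once this enumeration is carried out, the remaining structural compatibilities are forced by the universal property of the singletons as generators and the explicit structure of $K_\Gamma$.
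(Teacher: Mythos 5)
Your proposal is correct and follows essentially the same route as the paper: both identify the underlying set of $\reducedfunctor(H_\Gamma)$ as the singletons together with the intervals $[0,\gamma]$ (your downsets $D_\gamma$), observe that this is exactly the underlying set of $K_\Gamma$ as Dress and Wenzel define it, and then check that addition, multiplication, the null set, and $\epsilon=\{1\}=\{-1\}$ all match. Your absorption identities just make explicit the case analysis the paper leaves as ``straightforward checking of the definitions.''
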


\subsubsection{Partial demifields and doubly-distributive hyperfields}

A \emph{partial demifield} is an additional structure that can be placed on a hyperfield.  This
structure was introduced by Baker in \cite[Definition 4.1]{baker2016matroids} in order to explain
why matroids over some hyperfields admit a description in terms of vector axioms, but others do not
seem to.  It can also be seen as a generalization and refinement of the notion of \emph{patial
  fields} from \cite{pendavingh2010lifts}.  Not all hyperfields admit a partial demifield structure,
and when such a structure exists it need not be unique.

Let $\partialdemifields$ be the category of partial demifields.

\begin{thmA}\label{thm:dd-factorization}
The functor $\reducedfunctor$ admits a factorization $\reducedfunctor = \demifunctorB \circ
\demifunctorA$ where \[\demifunctorA: \ddhyperfields \to \partialdemifields\]
 and
  \[
\demifunctorB: Im(\demifunctorA) \to \fuzzystrong.
\]
The functor $\demifunctorA$ is neither full nor essentially surjective. 
\end{thmA}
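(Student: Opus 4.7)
The plan is to build $\demifunctorA$ by equipping each doubly-distributive hyperfield with a canonical partial demifield structure, then to build $\demifunctorB$ on the image so that its composition with $\demifunctorA$ recovers $\reducedfunctor$ on the nose, and finally to exhibit explicit examples witnessing the failures of fullness and essential surjectivity.

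First I would unpack Baker's definition of a partial demifield as a hyperfield $H$ together with a distinguished subset $\Sigma \subseteq H$ of ``admissible'' elements (closed under the appropriate partial operations and containing a sufficient supply of sums). Given a doubly-distributive hyperfield $H$, I would define $\demifunctorA(H)$ by taking $\Sigma_H$ to be a canonically specified subset built from iterated sums that remain singletons (or from formal sums whose evaluations are forced to be single-valued by double-distributivity). The key verification is that double-distributivity ensures the partial field axioms: it is exactly the condition that lets the distributive law be applied without running into multi-valued sums, so the partial operations are well-defined and associative on $\Sigma_H$. Morphisms of doubly-distributive hyperfields (necessarily strict) respect singletons of sums, hence preserve the chosen $\Sigma$, giving functoriality.

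Next, for $\demifunctorB$, I would observe that on the image of $\demifunctorA$ the fuzzy ring $\reducedfunctor(H)$ can be reconstructed purely from the partial demifield data: the underlying set and multiplication of $\reducedfunctor(H)$ are assembled from products of admissible sums, and the distinguished subset $K_0$ of the fuzzy ring is read off from the admissible relations in the partial demifield. Thus defining $\demifunctorB$ on objects by this reconstruction and on morphisms by the induced map on formal sums, the identity $\reducedfunctor = \demifunctorB \circ \demifunctorA$ is essentially tautological, following by comparing the explicit descriptions of $\reducedfunctor(H)$ from the earlier section with the output of the two-step construction.

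For the failure of fullness, I expect to exhibit two doubly-distributive hyperfields $H_1, H_2$ (likely the Krasner hyperfield $\mathbb{K}$ and the sign hyperfield $\mathbb{S}$, or variants such as hyperfields $H_\Gamma$ for different $\Gamma$) and a morphism of partial demifields $\demifunctorA(H_1) \to \demifunctorA(H_2)$ that does not come from any hyperfield morphism $H_1 \to H_2$, because the partial demifield morphism need only respect admissible sums rather than the entire hypersum. For failure of essential surjectivity, I would use Baker's observation that partial demifield structures need not be unique; exhibiting a partial demifield whose underlying hyperfield is not doubly-distributive, or one whose admissible set is strictly smaller than the canonical $\Sigma_H$, shows it does not arise as $\demifunctorA(H)$.

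The main obstacle will be pinning down the precise canonical choice of admissible subset $\Sigma_H$ and verifying that double-distributivity is both necessary and sufficient to make it a partial demifield satisfying $\demifunctorB \circ \demifunctorA = \reducedfunctor$. A too-large choice will fail closure, while a too-small choice will fail to reconstruct $\reducedfunctor$; getting the axiomatic match exact is the delicate part, and this is where the doubly-distributive hypothesis must be used most carefully.
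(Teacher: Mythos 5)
Your high-level architecture matches the paper's: equip each doubly-distributive hyperfield with a canonical partial demifield structure, let $\demifunctorB$ be the (essentially tautological) reconstruction of $\reducedfunctor(H)$ from that structure, and kill fullness and essential surjectivity with examples. However, there is a genuine gap at the heart of the construction: you have Baker's definition of a partial demifield backwards. A partial demifield is not ``a hyperfield $H$ together with a distinguished subset $\Sigma\subseteq H$''; it is a pair $(F,S)$ where $S$ is an honest (single-valued) semiring \emph{containing} $F$ as a multiplicative submonoid that generates $S$, subject to the compatibility condition that whenever $a+_S b$ lands back in $F$ it lies in the hypersum $a+_F b$. Consequently your proposed $\Sigma_H$ of ``iterated sums that remain singletons'' is the wrong object on two counts: it carves something out of $H$ rather than building a semiring around it, and it discards exactly the data needed downstream. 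The correct canonical choice is $S(H)=\{\sum a_i\}\subseteq\mathcal{P}^*(H)$, the set of \emph{all} finite hypersums viewed as subsets of $H$ (multi-valued ones included), with $H$ embedded as the singletons; double-distributivity is what makes $S(H)$ closed under the induced multiplication, via $(\sum a_i)(\sum b_j)=\sum a_ib_j$, and hence a semiring. Since the underlying set of $\reducedfunctor(H)$ \emph{is} $S(H)$ --- including non-singleton elements such as $\mathbb{S}\in\reducedfunctor(\mathbb{S})$ or the intervals $[0,a]\in\reducedfunctor(H_\Gamma)$ --- your $\demifunctorB$ could not reconstruct $\reducedfunctor(H)$ from singleton sums alone, so the identity $\reducedfunctor=\demifunctorB\circ\demifunctorA$ would fail. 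The obstacle you flag at the end (``pinning down the precise canonical choice'') is precisely the missing idea, not a routine detail.

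Two smaller points. For fullness, the relevant mechanism is that morphisms in $\ddhyperfields$ are required to be \emph{strict}, whereas a partial demifield morphism is merely a semiring homomorphism restricting to an ordinary (possibly non-strict) hyperfield homomorphism; the paper exploits this with $\mathbb{K}\to\mathbb{T}$ (no strict morphism exists because $f(1+1)$ would have to equal the infinite set $[-\infty,0]$, yet $0\mapsto-\infty$, $1\mapsto 0$, $\mathbb{K}\mapsto[-\infty,0]$ is a legitimate partial demifield morphism). Your specific candidate pair $\mathbb{K},\mathbb{S}$ does not work in either direction --- no partial demifield morphism exists there either --- though your alternative suggestion of $H_\Gamma$ for varying $\Gamma$ does cover the paper's example, since $\mathbb{K}=H_{\{e\}}$ and $\mathbb{T}=H_{\mathbb{R}}$. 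For essential surjectivity, your second suggestion (a partial demifield structure on a doubly-distributive hyperfield differing from the canonical one, e.g.\ $(\mathbb{S},\mathbb{Z})$) is sound and is exactly what the paper's characterization of the essential image detects: the image consists, up to isomorphism, of those $(F,S)$ for which iterated $+_S$-sums of elements of $F$ agree with iterated hypersums in $F$.
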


In particular, this says that doubly-distributive hyperfields admit canonical (though not
necessarily unique) partial demifield structures in a functorial way. Note also that since
$\reducedfunctor$ is faithful, each of $\demifunctorA$ and $\demifunctorB$ is also faithful.


\subsection*{Acknowledgments} We thank Matt Baker writing the paper that motivated this work and for
providing encouragement and useful comments on a draft.

\section{Review of hyperrings and fuzzy rings}\label{sec:review}

\subsection{A brief history}

The study of multi-valued binary operations can be traced back to Marty \cite{marty1935role}, where
the notion of hypergroups first appeared.  In \cite{krasner1956approximation} Krasner introduced
hyperrings in order to study a problem involving approximating a complete valued field of positive
characteristic by a sequence of such fields in characteristic zero.  For some time interest in
hyperrings was mainly concentrated in certain applied areas (see, e.g.,
\cite{corsini2003applications} and \cite{Dav2}) and hyperrings received relatively little attention
in the pure mathematics community. However, in the last decade that has changed.  For example, Viro
argues in \cite{viro}, \cite{viro2} that hyperrings are a natural structure deserving of study and in
particular they should play a central role in tropical geometry, Gladki and Marshall relate
hyperring theory to abstract real spectra and quadratic forms (\cite{mars1}, \cite{gladki2016witt}),
Connes and Consani \cite{con3} use hyperrings in their work on the Ad\`{e}le class space
$\mathbb{A}_K/K^\times$ over a global field $K$. The second author develops in \cite{jun2015algebraic} a scheme theory based on hyperrings.
Baker \cite{baker2016matroids} has shown that
hyperfields provide an elegant tool for unifying several different flavors of matroid theory.

Fuzzy rings are a different way of relaxing the axioms of rings.  They were introduced by Dress
in \cite{dress1986duality} to provide a unified framework for various classes of matroids, and they
have been developed further in subsequent papers by Dress and Wenzel.  The concept appears to have
not yet been taken up more broadly in the community.

\subsection{Hyperrings}

In this section we briefly review the definition of a hyperring.  For more details, we refer the
reader to \cite{jun2015algebraic} or \cite{baker2016matroids}.

Given a set $A$, we write $\mathcal{P}^*(A)$ for the set of all nonempty subsets of $A$.  A binary
\emph{hyperoperation} on $A$ is a function $+: A \times A \longrightarrow \mathcal{P}^*(A)$.  Note
that a binary operation can be considered as a hyperoperation valued in singletons.

\begin{construction}\label{con:op-ext}
Any binary hyperoperation $+$ on $A$ determines an associated binary operation on the powerset
$\mathcal{P}^*(A)$ by the formula
\begin{equation}
  X+Y=\bigcup_{x\in X, y\in Y} (x+y)
\end{equation} 
for $X,Y\subset A$.  
\end{construction}

Throughout this paper we shall make extensive use of the above construction.  It is
used in formulating what it means for a hyperoperation to be associative, as in the definition
below, and it forms the basis for the definition of the functors $\functor$ and $\reducedfunctor$
that are the central objects of study in this paper.

\begin{definition}\label{canonicalhypergroup}
  A \textbf{canonical hypergroup} is a set $A$ together with a hyperoperation $+$ (sometimes called a
  hyperaddition) that satisfies the following conditions:
  \begin{enumerate}
  \item (Commutativity) $a+b=b+a$ for all $a,b\in A$,
  \item (Identity) There is a (necessarily unique) element $0$ in $A$ such that $a+0=\{a\}$ for all
    $a \in A$.
  \item (Inverses) For each $a \in A$, there exists a unique element $-a \in A$ such that
    $0 \in a+ (-a)$.
  \item (Associativity) $(a+b)+c = a+(b+c)$ for all $a,b,c\in A$.
  \item (Reversibility) $a \in b+c$ if and only if $c \in a+(-b)$, for all $a,b,c\in A$.
  \end{enumerate}
\end{definition}

Observe that if a hyperoperation on $A$ is either commutative or associative then the associated
binary operation on $\mathcal{P}(A)^*$ is as well.

\begin{definition}
  A \textbf{hyperring} is a set $R$ with a multiplication operation $\times$ and an addition hyperoperation
  $+$ such that $(R,+)$ is a canonical hypergroup, $(R,\times)$ is a unital commutative monoid, and
  $+$ and $\times$ satisfy the conditions:
\begin{enumerate}
\item $a\times (b+c) = (a\times b) + (a \times c)$  for all $a,b,c \in R$, and
\item $a \times 0 = 0$ for all $a \in R$.
\end{enumerate}
When $(R \smallsetminus \{0\},\times)$ is a group the hyperring $(R,+,\times)$ is said to be a
\textbf{hyperfield}.
\end{definition}

As is usual practice, when there is no risk of ambiguity, we will often abuse notation and refer to
hyperrings by their underlying sets, writing $R$ in place of $(R,+,\times)$.  We will also commonly
write $ab$ for the product $a\times b$.

\begin{definition}
  Let $R$ and $S$ be hyperrings.  A \textbf{homomorphism} from $R$ to $S$ is a function
  $f: R\to S$ such that $f(0_R)=0_S$, $f(1_R)=1_S$, and for all $a,b\in R$ 
\begin{equation}\label{mor}
  f(a+_Rb) \subseteq f(a)+_S f(b), \quad \text{and} \quad f(a\times_R b)=f(a)\times_S f(b). 
\end{equation}
If the containment of \eqref{mor} is an equality for all $a,b \in R$ then $f$ is said to be a
\textbf{strict homomorphism.}
\end{definition}

\begin{definition}
  Let $F$ be a hyperfield. We say that $F$ is \textbf{doubly-distributive} if, for any
  $a,b,c,d \in F$, one has
\begin{equation}\label{doubly}
(a+b)(c+d)=ac+ad+bc+bd. 
\end{equation}
\end{definition}

\begin{remark}
Not all hyperfields are doubly-distributive.  See for example \cite[Theorem 5.B.]{viro} or Example \ref{dddd}
\end{remark}

We list four examples of hyperfields which are used by Baker in \cite{baker2016matroids} to unify
various flavors of matroids.

\begin{example}[The Krasner hyperfield] 
  Let $\mathbb{K}$ be the set $\{0,1\}$ equipped with the same multiplication as the field
  $\mathbb{F}_2$ and with the commutative addition hyperoperation given by $1+0=1$, $0+0=0$, and
  $1+1=\{0,1\}$.  The resulting structure is a hyperfield called the \textbf{Krasner hyperfield.}
\end{example}

\begin{example}[The hyperfield of signs]
  Let $\mathbb{S}:=\{-1,0,1\}$.  The multiplication on $\mathbb{S}$ is the restriction of
  multiplication on the integers.  The hyperaddition is given by
\begin{align*}
 1+1=1+0=\{1\},\\
  (-1)+(-1)=(-1)+0=\{-1\},\\
 1+(-1)=\{-1,0,1\}.
\end{align*}
The resulting hyperfield structure on $\mathbb{S}$ is called the \textbf{hyperfield of signs}.
\end{example}

\begin{example}[The tropical hyperfield]
  Let $\mathbb{T}:=\mathbb{R}\cup\{-\infty\}$ equipped with the ordering where $\mathbb{R}$ has the
  usual order and $-\infty$ is minimal.  Multiplication is given by the usual addition of real
  numbers together, extended so that $-\infty$ is absorbing. The hyperaddition is given by
  $a + b = \max(a,b)$ if $a \neq b$, and $a + a =[-\infty,a]$.  With this structure $\mathbb{T}$
  becomes a hyperfield called the \textbf{tropical hyperfield}.
\end{example}

\begin{example}[The phase hyperfield]
  Let $\mathbb{P}:=\mathbb{S}^1 \cup\{0\} \subset \mathbb{C}$.  Multiplication is the restriction of
  the usual multiplication of complex numbers, and we define a hyperaddition by
  $x + (-x):=\{-x,0,x\}$ and
  $x +  y:= \{\frac{ax+by}{||ax+by||}\mid a,b \in \mathbb{R}_{>0}\}$ if $x+y \neq 0$. $\mathbb{P}$.
  The resulting structure is a hyperfield called the \textbf{phase hyperfield}.
\end{example}

A useful source of examples of hyperrings is the following quotient construction,  which is proved
to yield a hyperring structure in \cite[Proposition 2.6.]{con3}.

\begin{construction}\label{quotientconstruction}
Let $R$ be a commutative ring, let $R^\times$ be the multiplicative group of units in $R$, and  let
$U$ be a subgroup of $R^\times$.  We consider $U$ acting on $R$ by multiplication and construct a
hyperring structure on the set of cosets $R/U$.  The hyperaddition $\oplus$ is defined by
\[
[a]\oplus [b] := \{ [c] \in R/U \mid c \in aU + bU\}. 
\]
and the multiplication is defined by
\[
[a]\odot [b]:=[ab]. 
\]
\end{construction}

\begin{example}
\begin{enumerate}
\item Let $k$ be any field such that $|k| \geq 3$ and $U=k^\times$. Then the Krasner hyperfield
  $\mathbb{K}$ is isomorphic to the hyperfield $k/U$.
\item Let $\mathbb{Q}$ be the field of rational numbers and $U=\mathbb{Q}_{>0}$ be the subgroup of
  the multiplicative group $\mathbb{Q}^\times$ which consists of positive rational numbers. Then one
  can easily see that $\mathbb{Q}/U$ is isomorphic to the hyperfield of signs $\mathbb{S}$.
\end{enumerate}
\end{example}

\begin{remark}
  There is a beautiful one-to-one correspondence between hyperfield extensions of $\mathbb{K}$ and
  projective geometries (with the condition that each line contains at least four points) together
  with a group of collineations. This correspondence links the complete classification problem of
  hyperfield extensions of $\mathbb{K}$ to (the abelian case of) a long-lasting conjecture on the
  existence of finite Non-Desarguesian projective plane with a simply transitive automorphisms. We
  also note that there exists a similar result relating hyperfield extensions of $\mathbb{S}$ and
  spherical geometries. For more details, see \cite[\S 3]{con3}.
\end{remark}

\subsection{Fuzzy rings and their morphisms}
We review the definition of fuzzy rings first introduced by Dress in \cite{dress1986duality}. 

\begin{definition}$($\cite[\S1]{dress1986duality} $)$ A \textbf{fuzzy ring} is a tuple
  $(K;+,\times;\varepsilon,K_0)$ where $K$ is a set equipped with two binary operations $+$ and
  $\times$, $\varepsilon\in K$ is a distinguished element, and $K_0 \subset K$ is a specified
  subset, subject to the following conditions:
  \begin{itemize}
  \item[(FR0)] $(K,+)$ and $(K,\times)$ are commutative monoids with neutral elements $0$ and $1$.
  \item[(FR1)] ($0$ is an absorbing element) $0 \times a=0$ for all $ a \in K$.
  \item[(FR2)] (Units distribute over addition) $a(b+c)=ab +ac$ for all $b,c \in K$ and $a \in K^\times$ (the
    group of multiplicative units of $K$).
  \item[(FR3)] $\varepsilon^2=1$.
  \item[(FR4)] $K_0+K_0 \subseteq K_0$, $K\times K_0 \subseteq K_0$, $0 \in K_0$ and $1 \notin K_0$.
  \item[(FR5)] For $a \in K^\times$, $(1+a) \in K_0$ if and only if $a=\varepsilon$.
  \item[(FR6)] For $a,b,c,d \in K$, if $(a+b)$ and $(c+d)$ are both in  $K_0$ then
    $ac + \varepsilon b d \in K_0$.
  \item[(FR7)] For $a,b,c,d \in K$, if $a+b(c+d) \in K_0$ then $a + bc + bd \in K_0$.
  \end{itemize}
\end{definition}
We will call an element \textbf{null} if it lies in $K_0$.

\begin{remark}
 These axioms can be reformulated in a slightly more intuitive manner as follows. The set $K$ together with the addition $+$ and the multiplication $\times$ forms a \emph{(non distributive) commutative semiring with $0$ and $1$}, by which we mean the validity of axioms (FR0) and (FR1). The subset $K_0\subset K$ is a \emph{proper semiring ideal}, by which we mean the validity of (FR4).
 
 The element $\epsilon$ is determined uniquely by property (FR5), which allows us to eliminate this constant from the language. Thus (FR3) and (FR5) are equivalent to saying that there exists a unique \emph{fuzzy inverse of $1$ with (respect to $K_0$)}, which is an element $\epsilon\in K^\times$ such that $1+\epsilon\in K_0$; this fuzzy inverse of $1$ satisfies (FR3) and (FR6).
 
 The semiring $K$ satisfies a \emph{fuzzy distributivity (with respect to $K_0$)} in the sense of (FR2) and (FR7).
\end{remark}

\begin{definition}[ {\cite[\S 1]{dress1986duality}}] Let $(K;+,\times, \varepsilon_K, K_0)$ and
  $(L; +, \times, \varepsilon_L, L_0)$ be fuzzy rings. A \textbf{weak morphism} $K \to L$ is a group
  homomorphism $f: (K^\times,\times) \to (L^\times,\times)$ which satisfies the following condition: 
  \begin{equation}\label{fuzzymor}
    \text{For any }  a_1,\ldots,a_n \in K^\times, \text{ if } \sum_{i=1}^n a_i \in K_0 \text{ then } \sum_{i=1}^n f(a_i) \in L_0. 
  \end{equation}
\end{definition}

\begin{definition}[ {\cite[\S 1]{dress1986duality}}] \label{def:strong-morphism}Let $(K;+,\times, \varepsilon_K, K_0)$ and
  $(L; +, \times, \varepsilon_L, L_0)$ be fuzzy rings. A \textbf{strong morphism} $K \to L$ is a
  function $f: K \longrightarrow L$ with $f(1)=1$, $f(0)=0$, and such that:
\begin{enumerate}
\item \label{modulecondition}
  $f(a\times b)=f(a)\times f(b)$ for all  $a \in K^\times$ and  $b \in K$.
\item \label{concon}  Given $a_i,b_i \in K$, if $\sum_{i=1}^n a_i \times b_i \in K_0$ then $\sum_{i=1}^n
  f(a_i)\times f(b_i) \in L_0$.
\end{enumerate}
\end{definition}

\begin{remark}
  These two notions of morphisms were introduced in \cite{dress1986duality}, where they were called
  \emph{morphisms} and \emph{homomorphisms}.  We feel this terminology is potentially confusing, and
  so we prefer to call them weak and strong, as above.
\end{remark}

Note that if $f: (K;+,\times, \varepsilon_K, K_0) \to (L; +, \times, \varepsilon_L, L_0)$ is either
a strong or a weak morphism then it follows from the definitions that
$f(\varepsilon_K)=\varepsilon_L$. Indeed, suppose that $f$ is either a weak or a strong morphism.
Since $1+\varepsilon_K \in K_0$, we should have $f(1)+f(\varepsilon_K) \in L_0$. However, since
$f(1)=1$, this is equivalent $1+f(\varepsilon_K) \in L_0$ and hence $f(\varepsilon_K)=\varepsilon_L$
from $(\mathrm{FR5})$.

It is straightforward to check that the classes of strong and weak morphism are both closed under
composition.  Moreover, the identity on $K^\times$ is a weak morphism that is the identity with respect
to composition, and the identity on $K$ is a strong morphism that is the identity with respect to
composition of strong morphisms. Hence fuzzy rings with either strong or weak morphisms form
categories; we write $\fuzzystrong$ for the category of fuzzy rings with strong morphisms, and
$\fuzzyweak$ for the category of fuzzy rings with weak morphisms.

\begin{example}\label{krasnerfuzzy}
Let $K:=\{0,1,k_0\}$, $0=\{0\}$, $\varepsilon=1$, and $K_0=\{0,k_0\}$. Consider the following addition and multiplication: 
\[
\begin{tabular}{ | c | c | c |c| }
    \hline
    $+$ &$0$ & $1$ & $k_0$  \\ \hline
    $0$ & $0$ & $1$ & $k_0$ \\  \hline
    $1$ &$1$ & $k_0$ & $k_0$  \\ \hline
    $k_0$ & $k_0$ & $k_0$ & $k_0$ \\  
   
    \hline
    \end{tabular} \qquad 
    \begin{tabular}{ | c | c | c |c| }
        \hline
        $\times$ &$0$ & $1$ & $k_0$  \\ \hline
        $0$ & $0$ & $0$ & $0$ \\  \hline
        $1$ &$0$ & $1$ & $k_0$  \\ \hline
        $k_0$ & $0$ & $k_0$ & $k_0$ \\ 
       
        \hline
        \end{tabular}
\]

Then $K$ becomes a fuzzy ring. Moreover, $K$ is in fact a final object in both $\fuzzystrong$ and
$\fuzzyweak$. For more details, see \cite[\S 1.5]{dress1986duality}.
\end{example}

\begin{example} \label{signfuzzy}
Let $L:=\{0,1,-1, k_0\}$, $0=\{0\}$, $\varepsilon=-1$, and $L_0=\{0,k_0\}$. Consider the following addition and multiplication: 
\[
\begin{tabular}{ | c | c | c |c|c| }
    \hline
    $+$ &$0$ & $1$ & $-1$ &$k_0$  \\ \hline
    $0$ & $0$ & $1$ & $-1$ &$k_0$ \\  \hline
    $1$ & $1$ & $1$ & $k_0$ &$k_0$ \\  \hline
    $-1$ &$-1$ & $k_0$ &$-1$ & $k_0$  \\ \hline
    $k_0$ & $k_0$ & $k_0$ &$k_0$ & $k_0$ \\  
   
    \hline
    \end{tabular} \qquad 
    \begin{tabular}{ | c | c | c |c|c| }
        \hline
        $\times$ &$0$ & $1$ & $-1$ &$k_0$  \\ \hline
        $0$ & $0$ & $0$ & $0$ &$0$ \\  \hline
        $1$ & $0$ & $1$ & $-1$ &$k_0$ \\  \hline
        $-1$ &$0$ & $-1$ &$1$ & $k_0$  \\ \hline
        $k_0$ & $0$ & $k_0$ &$k_0$ & $k_0$ \\  
       
        \hline
        \end{tabular}
\]
Then $L$ is a fuzzy ring with the given addition and multiplication. 
\end{example}

\begin{remark}
  Examples \ref{krasnerfuzzy} and \ref{signfuzzy} can be obtained by using the quotient construction
  of fuzzy rings (see, \cite[\S 1.4]{dress1986duality}) and the quotient construction of fuzzy rings
  is similar to the construction of quotient hyperrings (see, \cite[\S 2]{jun2015algebraic}).
\end{remark}

\begin{proposition}\label{restriction}
  Restricting each strong morphism $f:K \to L$ to the group of units of $K$ determines a functor
  $\fuzzystrong \to \fuzzyweak$.
\end{proposition}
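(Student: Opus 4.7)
The plan is to verify, in turn, that a strong morphism $f\colon K\to L$ restricts to a weak morphism $f^\times\colon K^\times\to L^\times$, and that this restriction operation respects identities and composition.

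First I would check that $f$ sends units to units. If $a\in K^\times$ with inverse $a^{-1}$, then by condition \eqref{modulecondition} of Definition \ref{def:strong-morphism} applied with $b=a^{-1}$, we have $f(a)\cdot f(a^{-1}) = f(a\cdot a^{-1}) = f(1) = 1$, so $f(a)\in L^\times$ with inverse $f(a^{-1})$. In particular, $f^\times \colon K^\times \to L^\times$ is well defined, and because \eqref{modulecondition} also applies when both arguments are units, it is a group homomorphism with respect to $\times$.

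Next I would verify the weak-morphism condition \eqref{fuzzymor}. Given $a_1,\ldots,a_n \in K^\times$ with $\sum a_i \in K_0$, set $b_i = 1$ for each $i$; then $\sum a_i\times b_i = \sum a_i \in K_0$, so condition \eqref{concon} of Definition \ref{def:strong-morphism} yields $\sum f(a_i)\times f(b_i) \in L_0$, and since $f(1)=1$ this is exactly $\sum f^\times(a_i) \in L_0$. Thus $f^\times$ is a weak morphism.

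Finally, functoriality is immediate: the set-theoretic restriction of the identity $\mathrm{id}_K\colon K\to K$ to $K^\times$ is the identity on $K^\times$, and for composable strong morphisms $K\xrightarrow{f} L\xrightarrow{g} M$ the restriction $(g\circ f)^\times$ agrees on $K^\times$ with $g^\times\circ f^\times$, both equaling the set-theoretic restriction of $g\circ f$. No step here presents a serious obstacle; the only point requiring mild care is Step 1, where one must notice that condition \eqref{modulecondition} of the strong morphism axioms is strong enough (despite its asymmetric statement requiring one factor to be a unit) to imply that units are preserved.
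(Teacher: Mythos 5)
Your proposal is correct and follows the same route as the paper: the essential step in both is to specialize condition \eqref{concon} of Definition \ref{def:strong-morphism} to $b_i=1$, and your additional verifications (units map to units via \eqref{modulecondition}, compatibility with identities and composition) are routine details the paper leaves implicit.
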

\begin{proof}
This follows immediately from the definitions by taking the $b_i$ to all be 1 in condition (2) of
the definition of strong morphism.
\end{proof}

The above functor is neither full nor faithful. The following example illustrates the lack of
faithfulness, and Example \ref{notfull} illustrates the failure to be full.

\begin{example}
  Any commutative ring $(R,+,\cdot)$ can be considered as a fuzzy ring by letting $K=R$,
  $K_0=\{0\}$, and $\varepsilon=-1$. This gives a faithful embedding of rings into fuzzy rings with
  strong morphisms.  In particular, consider the fuzzy ring $K$ associated with the univariate
  polynomial ring $\mathbb{Z}[x]$.  Sending $x$ to any polynomial $p(x)\in \mathbb{Z}[x]$ defines a ring
  endomorphism and hence a strong fuzzy ring endomorphism.  However, since the group $K^\times$ of units
  in $K$ is $\{1,-1\}$, all strong endomorphisms necessarily induce the same weak endomorphism,
  namely the identity.  
\end{example}

\section{The functor \texorpdfstring{$\functor: \hyperrings \to \fuzzystrong$}{from hyperrings to fuzzy rings}} \label{functor}

In \cite[Example 1.3]{dress1986duality}, Dress constructed a functor from rings to fuzzy rings.
We will first show that Dress's functor easily extends to hyperrings. 

Given a hyperring $R = (R,+, \times)$, consider the
following data:
\[
K = \mathcal{P}^*(R), \quad K_0 =\{T \in K \mid 0_R \in T\}, \quad \varepsilon = \{-1_R\} \in K,
\quad 0=\{0_R\} \in K_0, \quad 1=\{1_R\}\in K.
\]
The hyperaddition and multiplication on $R$ induce binary operations $+$ and $\times$ on $K$ via
Construction \ref{con:op-ext}.

\begin{definition}
  Let $\functor(R)$ denote the tuple $(K, +, \times, \epsilon, K_0)$ constructed from the hyperring
  $R$ as above.
\end{definition}

 In order to show that this yields a fuzzy ring, we first need the following lemma.

\begin{lemma}\label{units-are-singletons}
  With $K$ as above, the group of multiplicative units in $K$ consists of the singletons $\{x\}$
  such that $x$ is a multiplicative unit in $R$.
\end{lemma}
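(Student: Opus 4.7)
The plan is to prove two containments: every singleton $\{x\}$ with $x\in R^\times$ is a unit of $K$, and conversely every unit of $K$ must be a singleton whose element is a unit of $R$.

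For the first direction, the verification is immediate from the definition of the multiplication on $K$ via Construction \ref{con:op-ext}: if $x\in R^\times$ with inverse $x^{-1}\in R$, then
\[
\{x\}\times\{x^{-1}\} \;=\; \{x\times x^{-1}\} \;=\; \{1_R\},
\]
which is the multiplicative identity in $K$. So $\{x\}\in K^\times$.

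For the reverse direction, suppose $A,B\in K$ satisfy $A\times B=\{1_R\}$. Unwinding the definition, this says $\{a\times b\mid a\in A,\ b\in B\}=\{1_R\}$, i.e., $a\times b=1_R$ for \emph{every} pair $(a,b)\in A\times B$. The first step is to pick any $a_0\in A$ and $b_0\in B$; then $a_0\times b_0=1_R$, and since the multiplicative monoid of the hyperring $R$ is commutative, $a_0$ and $b_0$ are mutually inverse units of $R$. The second step is to force $A$ and $B$ to be singletons: for any other $a\in A$, we have $a\times b_0=1_R=a_0\times b_0$, and multiplying on the right by $a_0=b_0^{-1}$ gives $a=a_0$. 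Symmetrically $B=\{b_0\}$. This shows $A=\{a_0\}$ is a singleton whose element lies in $R^\times$.

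The argument is entirely routine; I do not expect any real obstacle. The only point worth flagging is that the cancellation step relies on the commutativity and single-valuedness of the multiplication on $R$ (built into the hyperring axioms) together with the fact that a product of two elements in a commutative monoid equal to $1$ automatically makes each element a unit. Neither the hyperaddition nor the structure of $K_0$ plays any role.
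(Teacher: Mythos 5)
Your proof is correct and follows essentially the same route as the paper: from $A\times B=\{1_R\}$ you deduce that every pairwise product equals $1_R$, hence every element of $A$ is a unit, and cancellation against a fixed element of $B$ forces $A$ (and symmetrically $B$) to be a singleton. The paper's proof is the same argument, merely omitting the easy converse containment that you spell out.
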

\begin{proof}
Suppose that $T_1,T_2 \in K^\times$ and $T_1\times T_2 =\{1_R\}$. This immediately implies that $T_i
\subseteq R^\times$ for $i=1,2$. If $x,y \in T_1$, then $x^{-1}, y^{-1} \in T_2$ since
$T_1\times_KT_2=\{1_R\}$. But this implies that $y\times x^{-1} =1_R$, so $x=y$.
\end{proof}




\begin{theorem}\label{object}
  Given a hyperring $R$, the tuple $\functor(R)$ constructed above is a fuzzy ring.
\end{theorem}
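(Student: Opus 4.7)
The plan is to verify the fuzzy ring axioms (FR0)--(FR7) for $\functor(R)=(K,+,\times,\epsilon,K_0)$ one by one, with each axiom reducing to a corresponding structural property of the hyperring $R$ via Construction \ref{con:op-ext}.

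Axiom (FR0): commutativity and associativity of $+$ on $K$ transfer from the hyperring $(R,+)$ via the observation made right after Definition \ref{canonicalhypergroup}, and $\{0_R\}$ is an identity because $T+\{0_R\}=\bigcup_{t\in T}(t+0_R)=T$; the multiplicative commutative monoid structure with identity $\{1_R\}$ is immediate since $\times$ is single-valued on $R$. Axiom (FR1) is a direct unpacking of $0_R\cdot r=0_R$. For (FR2), I invoke Lemma \ref{units-are-singletons} to identify $K^\times$ with singletons $\{u\}$ for $u\in R^\times$, and then $\{u\}\times(T_1+T_2)=\{u\}\times T_1+\{u\}\times T_2$ unpacks to the hyperring distributivity $u(a+b)=ua+ub$. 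Axiom (FR3) reduces to the hyperring identity $(-1_R)(-1_R)=1_R$, which follows from the inverse and distributivity axioms applied to $0_R=a\cdot 0_R\in a(b+(-b))=ab+a(-b)$. Axiom (FR4) is routine set-membership (with $1\notin K_0$ using the standing convention $0_R\neq 1_R$), and (FR5) follows from uniqueness of hyperadditive inverses: $0_R\in 1_R+u$ iff $u=-1_R$.

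The two axioms requiring a short argument are (FR6) and (FR7). For (FR6), given $T_i\in K$ with $0_R\in T_1+T_2$ and $0_R\in T_3+T_4$, I extract witnesses $t_i\in T_i$ satisfying $t_2=-t_1$ and $t_4=-t_3$ by uniqueness of inverses; then the set $t_1t_3+(-t_2)t_4=t_1t_3+(-(t_1t_3))$ contains $0_R$ by the inverse axiom in $R$, so $0_R\in T_1T_3+\epsilon T_2T_4$. For (FR7), given $0_R\in T+S(U+V)$, I extract $t\in T$, $s\in S$, $w\in U+V$ with $0_R\in t+sw$, write $w\in u+v$ for some $u\in U, v\in V$, and invoke the (ordinary one-variable) distributivity $s(u+v)=su+sv$ of the hyperring to get $sw\in su+sv$; hence $0_R\in t+sw\subseteq t+(su+sv)=t+su+sv\subseteq T+SU+SV$, using associativity of hyperaddition in the last step.

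I do not foresee a real obstacle; the work is essentially bookkeeping that tracks the singleton-versus-general-subset dichotomy in $K=\mathcal{P}^*(R)$. The only points meriting minor care are extracting appropriate pointwise witnesses from the set-level containments in (FR6) and (FR7), and noting that (FR2) works precisely because units are singletons (Lemma \ref{units-are-singletons}), so that full hyperring distributivity is never required for general subsets --- which is consistent with the fact that a general hyperring need not be doubly-distributive.
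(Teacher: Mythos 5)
Your proof is correct and follows essentially the same route as the paper's: an axiom-by-axiom verification in which (FR0)--(FR5) reduce directly to the hyperring axioms via Lemma \ref{units-are-singletons}, and (FR6)--(FR7) are handled by extracting pointwise witnesses from the set-level membership of $0_R$ and applying uniqueness of hyperadditive inverses and one-variable distributivity. The only differences are cosmetic (you spell out the bookkeeping for (FR6) and (FR7) slightly more explicitly than the paper does).
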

\begin{proof}
We will check the fuzzy ring axioms one by one.
\begin{enumerate}
\item[(FR0)] It follows directly from the definition of hyperrings that $(K,+)$ and $(K,\times)$ are commutative monoids.
\item[(FR1)]  One has $0\times a = 0$ for all $a \in K$ since $0=\{0_R\}$ and $0_R$ is an absorbing
  element in $R$.
\item[(FR2)] This condition follows from the distributive property of hyperrings and Lemma
  \ref{units-are-singletons}.  Suppose $T_1 \in K^\times$ and $T_2,T_3 \in K$.  Then $T_1=\{z\}$ for
  some $z \in R^\times$.  Then
\[
T_1 \times (T_2 +T_3)=\{z\} \times (T_2+T_3) = (\{z\}\times T_2)+(\{z\}\times T_3)=T_1\times T_2 +T_1 \times T_3. 
\]
\item[(FR3)] Since $\varepsilon= \{-1_R\}$, we have $\varepsilon^2=\{-1_R\}\times \{-1_R\}=\{1_R\}=1$.
\item[(FR4)] If $A,B \in K_0$ then $0_R \in A$ and $0_R \in B$, and hence
  $0_R \in A+B$, in particular, $K_0 + K_0 \subseteq K_0$. Also, clearly we have
  $K\times K_0 \subseteq K_0$, $0 \in K_0$ and $1 \not \in K_0$.
\item[(FR5)] This is immediate from the definition of $-1_R$ and Lemma
  \ref{units-are-singletons}. Indeed, any $T \in K^\times$ should be a singleton, say $T=\{a\}$. Hence
  $(1+T) \in K_0$ if and only if $0_R \in \{1_R\} + a = 1_R + a$, and this last equality is
  equivalent to the statement $a=-1_R$ (using the hyperring notion of additive inverse).

\item[(FR6)] Let $A_1,A_2,B_1,B_2 \in K$ such that $A_i+B_i \in K_0$ for $i=1,2$. This implies that
  we have $a \in A_1, -a \in B_1$ and $b \in A_2, -b \in B_2$ for some $a,b \in R$. Therefore,
  $ab \in (A_1\times A_2)\cap (B_1\times B_2)$. Since $ \varepsilon =\{-1_R\}$, we have
  $0_R \in ab - ab \in A_1\times A_2 +\varepsilon B_1\times B_2$. Therefore,
  $A_1\times A_2 +\varepsilon B_1\times B_2 \in K_0$.
\item[(FR7)] Let $A,B,C,D \in K$ such that $A+B\times(C+D) \in K_0$. This means that we have
  $a \in A,b \in B, c\in C,d\in D$ such that $0_R \in a+b(c+d)=a+bc +bd$. However,
  $bc \in B\times C$ and $bd \in B\times D$. Therefore $A+(B\times C) + (B\times D ) \in
  K_0$. \qedhere
\end{enumerate}
\end{proof}

\begin{example}\label{krasner}
  Let $\mathbb{K}=\{0,1\}$ be the Krasner's hyperfield. Under the construction of Theorem
  \ref{object}, one obtains $K=\{0,1, \mathbb{K}\}$. Here one can check that the addition and
  multiplication on $K$ is same as the fuzzy ring in Example \ref{krasnerfuzzy} by letting
  $\mathbb{K}=k_0$. Furthermore, as mentioned above, when one restricts the addition and
  multiplication to $\{0,1\} \subseteq K$, they agree with the operations in the Krasner
  hyperfield $\mathbb{K}$.
\end{example}

\begin{example}\label{signsexample}
  Let $\mathbb{S}=\{0,1,-1\}$ be the hyperfield of signs. From the construction of Theorem
  \ref{object}, one obtains $K=\{0,1,-1,\{0,1\},\{1,-1\},\{0,-1\},\mathbb{S}\}$. Let us restrict $K$
  to $K'=\{0,1,-1,\mathbb{S}\}$. Then one can check that the addition and multiplication on $K'$ is
  same as Example \ref{signfuzzy}. Furthermore, as in Example \ref{krasner}, if we restrict the
  operations on $K'$ to $\{0,1,-1\}$, then they are exactly same as in $\mathbb{S}$.
\end{example}

\begin{remark}
 We will see that the previous examples are particular cases of the `reduced' construction from $\S \ref{demi}$ applied to $\mathbb{K}$ and $\mathbb{S}$, respectively
\end{remark}

Let $f:R_1\to R_2$ be a morphism of hyperrings and let $\tilde R_i=\functor(R_i)$ be the associated fuzzy rings $\{\mathcal{P}^*(R_i),+,\times;-1,R^0_i\}$, where $\mathcal{P}^*(R_i)$ is the set of nonempty subsets of $R_i$, the addition and the multiplication are as in Theorem \ref{object}, and $R^0_i=\{A \in \mathcal{P}^*(R_i) \mid 0_{R_i} \in A\}$. We define a map
 \begin{equation}\label{morphismdef}
 \functor{(f)}:\tilde{R}_1 \longrightarrow \tilde{R}_2, \quad A \mapsto f(A), 
 \end{equation}
 where $f(A):=\{f(a) \mid a \in A\subseteq R_1\}$. 

\begin{proposition}\label{homomor}
 For every morphism $f:R_1\to R_2$ of hyperrings, the associated map $\functor(f):\tilde R_1\to\tilde R_2$ is a strong morphism of fuzzy rings.
\end{proposition}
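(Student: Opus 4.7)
The plan is to verify directly the two conditions in Definition \ref{def:strong-morphism} for the set-valued map $\functor(f):\tilde R_1\to\tilde R_2$, $A\mapsto f(A)$. Preservation of the distinguished elements is immediate: $\functor(f)(\{0_{R_1}\})=\{f(0_{R_1})\}=\{0_{R_2}\}$ and similarly for $1$, because a hyperring morphism carries $0$ to $0$ and $1$ to $1$ on the level of underlying sets.

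For condition \eqref{modulecondition}, I would use Lemma \ref{units-are-singletons} to write any unit $A\in\tilde R_1^\times$ as a singleton $\{u\}$ with $u\in R_1^\times$. For $B\in\tilde R_1$, the set $A\times_{\tilde R_1}B$ equals $\{ub\mid b\in B\}$, so
\[
\functor(f)(A\times B)=\{f(ub)\mid b\in B\}=\{f(u)f(b)\mid b\in B\}=\functor(f)(A)\times\functor(f)(B),
\]
where the middle equality is the (honest, single-valued) multiplicativity of $f$ as a hyperring morphism. In fact the same argument shows multiplicativity for all pairs $A,B$, not just when one is a unit, but only the weaker form is required.

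For condition \eqref{concon}, assume $\sum_{i=1}^n A_i\times B_i\in K_0$, i.e.\ $0_{R_1}$ lies in the union, over choices $a_i\in A_i$ and $b_i\in B_i$, of the hypersums $a_1b_1+\cdots+a_nb_n$. Fix such a witnessing tuple, so $0_{R_1}\in a_1b_1+\cdots+a_nb_n$. The key step is an inductive extension of the axiom $f(x+y)\subseteq f(x)+f(y)$ to an $n$-fold hypersum: unwinding the iterated hyperoperation as in Construction \ref{con:op-ext} and applying the two-term inclusion at each stage yields $f(x_1+\cdots+x_n)\subseteq f(x_1)+\cdots+f(x_n)$. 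Setting $x_i=a_ib_i$ and using $f(a_ib_i)=f(a_i)f(b_i)$ gives
\[
0_{R_2}=f(0_{R_1})\in f(a_1)f(b_1)+\cdots+f(a_n)f(b_n)\subseteq\sum_{i=1}^n \functor(f)(A_i)\times\functor(f)(B_i),
\]
which places the right-hand side in $L_0$ as required.

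The only genuine obstacle is the bookkeeping in the last step: one must confirm that the inclusion $f(x+y)\subseteq f(x)+f(y)$ propagates through the $n$-fold sum despite addition being multi-valued. This is a straightforward induction once one observes that, by associativity of the hyperoperation and the definition of the extended operation on $\mathcal{P}^*(R)$, every element of $x_1+\cdots+x_n$ lies in some $w+x_n$ for a $w\in x_1+\cdots+x_{n-1}$, so the inductive hypothesis applies term-by-term.
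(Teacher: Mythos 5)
Your proof is correct and follows essentially the same route as the paper's: extract witnesses $a_i\in A_i$, $b_i\in B_i$ with $0_{R_1}$ in the hypersum of the products, then push everything forward through $f$. You are somewhat more explicit than the paper about the induction needed to extend $f(x+y)\subseteq f(x)+f(y)$ to $n$-fold hypersums, a step the paper leaves implicit.
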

\begin{proof}
 For notational convenience, we identify singletons with
 elements and let $\functor(f):=\tilde{f}$. We have to check the
 following:
\begin{equation}\label{condition1}
  \tilde{f}(1)=1, \quad \tilde{f}(0)=0, \quad \tilde{f}(a\times b)=\tilde{f}(a)\times\tilde{f}(b)\textrm{ for }a \in \tilde{R}_1^\times, b \in \tilde{R}_1
\end{equation}
\begin{equation}\label{condition2}
  \sum_{i=1}^n\tilde{f}(a_i)\times \tilde{f}(b_i) \in \tilde{R}_2^0 \textrm{ whenever } \sum_{i=1}^na_i\times b_i \in \tilde{R}^0_1\textrm{ for } a_i,b_i \in \tilde{R}_1.
\end{equation}
Now clearly, $\tilde{f}(1)=1$ and $\tilde{f}(0)=0$ since $1_{\tilde{R}_i}=\{1_{R_i}\}$ and
$0_{\tilde{R}_i}=\{0_{R_i}\}$. Also, $\tilde{f}(a\times b)=\tilde{f}(a)\times\tilde{f}(b)$ holds
since $f$ is a homomorphism of hyperrings. All it remains to show is the last condition
\eqref{condition2}. Suppose that $\sum_{i=1}^na_i\times b_i \in \tilde{R}^0_1$. This implies that
there exist $x_i \in a_i$ and $y_i \in b_i$ ( $x_i,y_i \in R_1$) such that
$0 \in \sum_{i=1}^nx_i\times y_i$. Therefore, we have $0 \in \sum_{i=1}^nf(x_i)\times f(y_i)$ since
$f$ is a homomorphism of hyperrings. However, since $f(x_i) \in \tilde{f}(a_i)$ and
$f(y_i) \in \tilde{f}(b_i)$, we have
$\sum_{i=1}^n\tilde{f}(a_i)\times \tilde{f}(b_i) \in \tilde{R}_2^0$. This proves that $\tilde{f}$ is
a strong morphism.
\end{proof}

It is straightforward to see that $f\mapsto \functor(f)$ respects compositions and identity
morphisms, and so it is a functor.  Extending the fact that the category of ordinary rings faithfully embeds into
fuzzy rings with strong morphisms, we have the following result.

\begin{proposition}
  The functor $\functor:\hyperrings \to \fuzzystrong$ is faithful.
\end{proposition}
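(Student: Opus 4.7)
The plan is to unwind the definition of $\functor$ on morphisms and exploit the fact that the construction embeds $R$ into $\functor(R)$ via singletons. Concretely, given two hyperring homomorphisms $f,g:R_1\to R_2$ with $\functor(f)=\functor(g)$, I want to recover the equality $f=g$ as ordinary set maps on $R_1$.

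Suppose $\functor(f)=\functor(g)$ as strong morphisms $\functor(R_1)\to\functor(R_2)$. By the definition in \eqref{morphismdef}, both maps send a nonempty subset $A\subseteq R_1$ to its image under $f$ (respectively $g$). In particular, evaluating on a singleton $\{x\}\in\mathcal{P}^*(R_1)$ for any $x\in R_1$, we get
\[
\{f(x)\} \;=\; \functor(f)(\{x\}) \;=\; \functor(g)(\{x\}) \;=\; \{g(x)\},
\]
so $f(x)=g(x)$. Since $x$ was arbitrary, $f=g$.

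There really is no obstacle here: the key observation is simply that the singleton inclusion $R\hookrightarrow\functor(R)$, $x\mapsto\{x\}$, is injective, and the functor $\functor$ is defined on morphisms in such a way that the action on singletons literally records the original map. Hence passing to $\functor$ loses no information about a hyperring homomorphism, and faithfulness follows. This argument makes no use of the hyperring structure beyond the existence of singletons, so it applies uniformly to $\hyperrings$ (and a fortiori to the subcategory $\hyperfields$).
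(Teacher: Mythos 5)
Your proof is correct and follows exactly the same route as the paper: both arguments observe that $\functor(f)$ acts on a singleton $\{x\}$ by sending it to $\{f(x)\}$, so agreement of $\functor(f)$ and $\functor(g)$ on singletons forces $f=g$. Nothing further is needed.
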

\begin{proof}
  Suppose that $f,g: R_1 \longrightarrow R_2$ are two homomorphisms of hyperrings such that
  $\functor(f)=\functor(g)$. Then, in particular, $\functor(f)$ and $\functor(g)$ agree on
  singletons in $R_1$, and this implies that $f = g$.
\end{proof}

We note that the functor $\functor$ is not full in general, even restricted to the subcategory of
hyperfields, as demonstrated by the following example.

\begin{example}\label{notfullexample}
  Let $\mathbb{K}=\{0,1\}$ be the Krasner hyperfield and consider the hyperfield
  $\mathbb{Q}(x) /\mathbb{Q}^\times$ obtained from the field of rational functions $\mathbb{Q}(x)$
  as a quotient hyperring via Construction \ref{quotientconstruction}.  Since any morphism of
  hyperfields must send $0$ to $0$ and $1$ to $1$, there can be at most one morphism
  $f:\mathbb{K} \to \mathbb{Q}(x) /\mathbb{Q}^\times$, and in fact there is exactly one since in
  $\mathbb{Q}(x) /\mathbb{Q}^\times$ we have $1+1= \{0,1\}$. In contrast, we claim that in the
  category of fuzzy rings and strong morphisms, there is more than one strong morphism
  \[
\functor(\mathbb{K}) \to \functor(\mathbb{Q}(x) /\mathbb{Q}^\times). 
\]
We have the morphism $\functor(f)$, and we will now construct a second morphism.  Consider the
mapping
  \[
  \varphi: \functor(\mathbb{K})=\{\{0\},\{1\},\mathbb{K}\} \longrightarrow \functor(\mathbb{Q}(x)
  /\mathbb{Q}^\times),
\]
\begin{align*}
0 & \mapsto 0,\\
1 &\mapsto 1, \\
\mathbb{K} & \mapsto \mathbb{Q}(x)/\mathbb{Q}^\times.
  \end{align*}
  As maps of sets, $\varphi$ and $\functor(f)$ are distinct since
  $\functor(f)(\mathbb{K}) = \{0,1\}$ and $\varphi(\mathbb{K}) = \mathbb{Q}(x) /\mathbb{Q}^\times$,
  which has more elements than just 0 and 1.  It remains to verify that $\varphi$ is indeed a strong
  morphism. Since $\functor(\mathbb{K})^\times=\{1\}$, it is immediate that $\varphi$ satisfies
  condition (1) of Definition \ref{def:strong-morphism}.  Now, suppose that $\sum_{i=1}^n a_ib_i$ is
  null in $\functor(\mathbb{K})$.  If all summands are equal to $\{0\}$ then it
  must be the case that $a_i$ or $b_i$ is $\{0\}$ for all $i$, and so $\sum_{i=1}^n \varphi(a_i)\varphi(b_i)$
  is $\{0\}$ and thus trivially null.  If at least one of the summands $a_j b_j$ is not null, then
  either there is at least one summand $a_j b_j$ equal to $\mathbb{K}$ or there are at least two
  summands ($a_j b_j$ and $a_k b_k$) equal to $\{1\}$. In the first case, one of $a_j$ or $b_j$ is
  $\mathbb{K}$ and the other is $\{1\}$ or $\mathbb{K}$, and so
  $\varphi(a_j)\varphi(b_j)=\mathbb{Q}(x) /\mathbb{Q}^\times$ and therefore the sum
  $\sum_{i=1}^n \varphi(a_i)\varphi(b_i)$ is null.  In the second case, $a_j=b_j=1$ so
  $\varphi(a_j)\varphi(b_j)=\{1\}$ and the same for the $k$ terms, and thus in the summation
  $\sum_{i=1}^n \varphi(a_i)\varphi(b_i)$, the element $\{1\}$ occurs at least twice and therefore the sum null.
\end{example}

Passing from strong to weak morphisms eliminates the phenomenon in the above example and we have:

\begin{proposition}
  Restricting $\functor$ to hyperfields, the composition
  \[\hyperfields \stackrel{\functor}{\to} \fuzzystrong \to \fuzzyweak\] is
  fully faithful.
\end{proposition}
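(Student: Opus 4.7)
The plan is to show faithfulness and fullness separately, with most of the work going into fullness. For faithfulness, suppose $f_1, f_2 : F_1 \to F_2$ are hyperfield morphisms whose images under $\functor$ induce the same weak morphism. By Lemma \ref{units-are-singletons}, the units of $\functor(F_i)$ are precisely the singletons $\{x\}$ with $x \in F_i^\times$, so agreement of the induced weak morphisms on units forces $\{f_1(x)\} = \{f_2(x)\}$ for every $x \in F_1^\times$. Combined with $f_i(0) = 0$, this yields $f_1 = f_2$.

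For fullness, let $\phi: \functor(F_1) \to \functor(F_2)$ be a weak morphism. By definition $\phi$ is a group homomorphism between unit groups, so via Lemma \ref{units-are-singletons} it corresponds to a group homomorphism $\bar\phi: F_1^\times \to F_2^\times$. Define $f : F_1 \to F_2$ by $f(0) = 0$ and $f|_{F_1^\times} = \bar\phi$. Multiplicativity $f(ab) = f(a)f(b)$ is immediate. The heart of the proof is verifying the hypersum containment $f(a+b) \subseteq f(a) + f(b)$.

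To check this, fix $c \in a+b$ and assume first that $a, b, c$ all lie in $F_1^\times$. A standard argument in a canonical hypergroup (combining reversibility with uniqueness of additive inverses) shows $c \in a+b$ if and only if $0 \in a + b + (-c)$, which translates to $\{a\} + \{b\} + \{-c\} \in K_0$ in $\functor(F_1)$. All three summands are units of $\functor(F_1)$, and since weak morphisms must send $\varepsilon$ to $\varepsilon$ we have $\phi(\{-c\}) = \phi(\{-1_{F_1}\} \cdot \{c\}) = \{-f(c)\}$; hence the weak-morphism nullity condition (applied to these three unit summands) gives $\{f(a)\} + \{f(b)\} + \{-f(c)\} \in L_0$, which via reversibility in $F_2$ is exactly $f(c) \in f(a) + f(b)$. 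The degenerate subcases where some of $a, b, c$ is $0$ (which forces $b = -a$ in the subcase $a, b \in F_1^\times, c = 0$) are handled directly.

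Finally, the weak morphism induced by $\functor(f)$ agrees with $\phi$ on every singleton unit, since $\functor(f)(\{x\}) = \{f(x)\} = \phi(\{x\})$. The main obstacle is the translation between hypersum membership and nullity of sums of units carried out in the third paragraph; this step relies crucially on the assumption that $F_1$ is a hyperfield, because it requires $-c$ to be a unit whenever $c \neq 0$ so that the nullity condition can be expressed purely in terms of units (the regime where the weak-morphism condition has teeth). This is precisely why the fullness statement cannot be extended from $\hyperfields$ to $\hyperrings$.
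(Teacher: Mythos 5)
Your proof is correct and follows essentially the same route as the paper's: faithfulness via the identification of $F^\times$ with the singleton units of $\functor(F)$, and fullness by defining $f$ from the weak morphism on units and translating $c\in a+b$ into nullity of $\{a\}+\{b\}+\{-c\}$ via reversibility, then back again in the target. Your explicit treatment of the degenerate cases with $0$ and your closing remark on why $-c$ must be a unit are welcome details the paper leaves implicit, but the argument is the same.
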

\begin{proof}
  Let $f:R_1\longrightarrow R_2$ be a homomorphism of hyperfields. Since the group of multiplicative
  units of $R_i$ is canonically isomorphic to the group of units of the fuzzy ring $\functor(R_i)$,
  and since $R_i$ are both hyperfields (so all nonzero elements are units), it follows that
  $\functor$, when regarded as a functor $\hyperfields \to \fuzzyweak$, is faithful.

  Next, we show that it is full.  Suppose that
  $\varphi: \functor(R_1)^\times \longrightarrow \functor(R_2)^\times$ is a weak morphism.  Again,
  since $\functor(R_i)^\times \cong R_i^\times$, this induces a map of sets $f: R_1 \to R_2$ by the
  rule $\{f(a)\}=\varphi(\{a\})$ for $a\neq 0$, and defining $f(0)=0$.  By construction $f$ is
  multiplicative, so we only have to verify the additivity condition:
  \[ 
  f(a+b) \subseteq f(a)+f(b), \quad \forall a,b \in R_1.
  \]
  Given an element $c \in a+b$, we have
  \begin{equation}\label{con}
    0 \in c+(-c) \subseteq (a+b)+(-c)
  \end{equation}
  Note that, since $\varphi({-a})=-\varphi({a})$, we have $f(-a)=-f(a)$ for all $a \in R_1$. Hence
  the containment \eqref{con} implies that $\{a\}+\{b\}+\{-c\}$ is null in the fuzzy ring
  $\functor(R_1)$, and hence $\varphi(\{a\})+\varphi(\{b\})+\varphi(\{-c\})$ is null in
  $\functor(R_2)$.  In other words, $0 \in f(a)+f(b) - f(c)$, so $f(c) \in f(a)+f(b)$ and hence $f$
  is a homomorphism of hyperfields and $\varphi = \functor(f)$ This proves that the functor
  $\functor: \hyperfields \to \fuzzyweak$ is full.
\end{proof}

The following examples show that the restriction from hyperrings to hyperfields in the above
proposition is necessary for both fullness and faithfulness. 

\begin{example} \label{notfullhyperrings} This example illustrates that fullness requires
  restricting to hyperfields.  For any (multiplicative) abelian group $H$ of order at least $4$, one
  can canonically associate a hyperfield $\mathbb{K}[H]$. (We refer the readers to \cite[\S 3]{con3}
  and the references therein for more details.)  Briefly, the underlying set of $\mathbb{K}[H]$ is
  given by $H \cup \{0\}$. Multiplication is that of $H$ with $0$ as an absorbing element, and
  addition is given by the following rule.
  \begin{align*}
    a+ 0 & = a,\\
    a + a & = \{0,a\},\\
  \end{align*}
  and if $a$ and $b$ are distinct nonzero elements then $a+b=H \smallsetminus \{a,b\}$. Moreover, we
  can adjoin two multiplicatively idempotent elements $e$ and $f$ to obtain another hyperring
  $\mathbb{K}[H]\cup \{e,f\}$ with the presentation:
  \begin{equation} \label{presentation} e^2=e,\quad f^2=f, \quad ef=0,\quad ah=a, \quad b+b=\{0,b\}, \quad b+c=H\cup \{e,f\} \smallsetminus \{b,c\} 
  \end{equation}
  where $h\in H$, $a\in\{e,f\}$ and $b,c\in H\cup\{e,f\}$ with $b\neq c$.
  Let $R:=\mathbb{K}[H]\cup\{e,f\}$ and $L:=\mathbb{K}[H]$.  We will construct a weak fuzzy ring
  morphism $\functor(R) \to \functor(L)$ that does not come from a hyperring homomorphism.
  
  First observe that $R^\times \cong L^\times \cong H$. Therefore the identity map
  $i:R^\times \longrightarrow L^\times$ defines a weak morphism from the fuzzy ring $\functor(R)$ to
  $\functor(L)$ since $\functor(R)^\times \cong R^\times$ and $\functor(L)^\times \cong L^\times$.
  We will prove by contradiction that this weak morphism does not come from a hyperring
  homomorphism.

  Suppose that $g:R \longrightarrow L$ is a homomorphism of hyperrings inducing the weak morphism
  $i$, so $g|_{R^\times}$ is the identity on $H$. Since $ef=0$ and $L$ is a hyperfield, either
  $g(e)=0$ or $g(f)=0$. In fact, both $g(e)$ and $g(f)$ must be zero; for if $g(e)=u$ for some $u \in H$
  then it follows from the presentation \eqref{presentation} that for any $h \in H$ we have
  \[
  g(eh)=g(e)=g(e)g(h)=g(e)h
  \]
  since $g$ is a homomorphism of hyperrings, $h \in H=R^\times$, and $g|_{R^\times}=i$, and this
  contradicts the hypothesis that $H$ is not the trivial group.  Therefore $g(e)=g(f)=0$.

  Next, from the definition, we have $1 \in e+f$ and hence $0 \in 1+e+f$. 
  Since $g$ is a homomorphism of hyperrings, we should have
  $g(0)=0 \in g(1)+g(e)+g(f)$. But $g(1)+g(e)+g(f)=1+0+0=1$ and this gives the desired
  contradiction.
 \end{example}
 
The preceding example also shows that the canonical functor $\fuzzystrong \to \fuzzyweak$
is not full, as explained in the following example.

\begin{example}\label{notfull}
  Using the same notation as in Example \ref{notfullhyperrings}, we have the weak morphism
  $i:\functor(R) \longrightarrow \functor(L)$, and we claim that it is not the restriction of a
  strong morphism $g:\functor(R) \longrightarrow \functor(L)$.  The proof is by
  contradiction. Suppose there is such a strong morphism $g$.  Then $g(e)=g(f)=0$ since
  $g(eu)=g(e)g(u)$ from the condition \eqref{modulecondition} of a strong morphism and the fact that
  $u \in R^\times$.  On the other hand, $0 \in e+f+1$, which means that $(e+f+1)$ is null.  However
  $1=g(e)+g(f)+g(1)$ and hence $(g(e)+g(f)+g(1))$ is not null, which contradicts the hypothesis that
  $g$ is a strong morphism.
\end{example}


\section{Characterizing the essential image of the functor
  \texorpdfstring{$\functor: \hyperfields \to \fuzzyweak$}{from hyperfields to fuzzy
    rings}} \label{invfunctor} In this section, we determine the essential image of the fully
faithful functor $\functor:\hyperfields \to \fuzzyweak$ as the full subcategory of $\fuzzyweak$
whose objects are fuzzy rings satisfying the following assumption.

We say that a fuzzy ring $(K,+,\times,\varepsilon,K_0)$ is \emph{field-like} if for each pair of
units $a,b\in K^\times$, there exists an element $c\in K^\times\cup\{0\}$ such that $a+b+c\in K_0$.
We denote the full subcategory of $\fuzzyweak$ whose objects are field-like fuzzy rings by
$\fuzzyweakast$.  

The condition of being field-like is not vacuous, as this example shows.
\begin{example}
Consider the ring of integers $\mathbb{Z}$ as a hyperring, and the corresponding fuzzy ring
$K= \functor(\mathbb{Z})$. The underlying set is all nonempty subsets of the integers, the null elements are
the nonempty sets that contain zero, and the units are the singletons $\{1\}$ and $\{-1\}$.  Taking
$a=b=\{1\}$, there is no unit $c$ such that $a+b+c$ is null since $\{1\}+\{1\}+\{1\}=\{3\}$ and
$\{1\}+\{1\}+\{-1\}=\{1\}$.  Thus $\functor(\mathbb{Z})$ is not field-like.
\end{example}

Note that for every hyperfield $k$, the fuzzy ring $\functor(k)$ is field-like, as can be seen as
follows. For all $a,b\in k$, the hypersum $a + b$ contains an element $c'\in k$. Thus $0\in a+b-c'$
and so in $\functor(k)$ we have $a+b+c\in \functor(k)_0$ for $c=-c'$. Thus the essential image of
the functor $\functor:\hyperfields \to \fuzzyweak$ is contained in $\fuzzyweakast$.

We may thus regard $\functor$ as having codomain $\fuzzyweakast$.  We have already seen that it is
fully faithful, and we now show that it is essentially surjective onto $\fuzzyweakast$ by
constructing a quasi-inverse.

Given a field-like fuzzy ring $(K,+,\times,\varepsilon,K_0)$, we let $\invfunctor(K)$ be the
commutative multiplicative monoid $\{0\}\cup K^\times$ equipped with the following hyperoperation
$\oplus$: for $a,b \in \invfunctor(K)$,
\begin{equation}
a\oplus b :=\{c \in \invfunctor{(K)} \mid a+b+\varepsilon c \in K_0\}.
\end{equation}

\begin{lemma}
With the above construction, $\invfunctor(K)$ is a hyperfield. 
\end{lemma}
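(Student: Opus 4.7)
The plan is to verify the hyperfield axioms for $\invfunctor(K) = \{0\} \cup K^\times$ directly from the fuzzy ring axioms of $K$ together with the field-like hypothesis. All axioms except associativity reduce to routine applications of (FR0)--(FR5); associativity is the main technical obstacle.

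Nonemptiness of $a \oplus b$ is exactly the content of the field-like hypothesis, after substituting $c \mapsto \varepsilon c$ via $\varepsilon^2 = 1$ to land in $\invfunctor(K)$. Commutativity is immediate from commutativity of $+$ in $K$. For the identity axiom, the null condition $a + \varepsilon c \in K_0$ defining $0 \oplus a$ is analyzed by multiplying through by $a^{-1}$ (using (FR2) and (FR4)) to reduce it to $1 + \varepsilon c a^{-1} \in K_0$; by (FR5) this forces $c = a$, so $0 \oplus a = \{a\}$. The same technique applied with $\varepsilon c$ replaced by $0$ shows that the unique hypernegation is $-a := \varepsilon a$. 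Reversibility follows by multiplying the null condition $b + c + \varepsilon a \in K_0$ by the unit $\varepsilon$ and invoking (FR2), $\varepsilon^2 = 1$, and closure of $K_0$ under multiplication by $K$.

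For associativity, my plan is to establish the symmetric ternary criterion
\[
d \in (a \oplus b) \oplus c \quad\Longleftrightarrow\quad a + b + c + \varepsilon d \in K_0,
\]
from which associativity follows by inspection of the right-hand side. The forward direction is a clean application of (FR6): given a witness $e$ with $a + b + \varepsilon e \in K_0$ and $e + c + \varepsilon d \in K_0$, choosing $A = a+b$, $B = \varepsilon e$, $C = e$, $D = c + \varepsilon d$ in (FR6) produces $(a+b)e + e(c + \varepsilon d) \in K_0$; when $e \in K^\times$, two applications of (FR2) refactor this as $e(a+b+c+\varepsilon d) \in K_0$, and cancellation by $e^{-1}$ via (FR4) yields the conclusion (the case $e = 0$ is immediate from $K_0 + K_0 \subseteq K_0$).

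The reverse direction is the principal obstacle: given $a + b + c + \varepsilon d \in K_0$, we must produce $e \in \invfunctor(K)$ satisfying both $a + b + \varepsilon e \in K_0$ and $e + c + \varepsilon d \in K_0$ simultaneously, i.e., an element of the intersection $(a \oplus b) \cap (d \oplus \varepsilon c)$. Field-likeness supplies elements of each set separately, but a generic candidate from one need not lie in the other --- the correct witness depends on the given null sum. The strategy is to combine this null sum with one produced by field-likeness on $(c, \varepsilon d)$ via (FR6), then use (FR7) together with (FR2) on units to systematically expand and refactor the resulting products, and finally extract the desired containment by unit cancellation. This is the step that requires the full interplay of (FR6), (FR7), and the field-like hypothesis. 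With associativity in hand, the remaining axioms --- closure of the multiplicative structure (products of units are units, products with $0$ are $0$), distributivity $a(b \oplus c) = (ab) \oplus (ac)$ (by (FR2) and unit cancellation when $a \in K^\times$, trivially when $a = 0$), and the group property of $\invfunctor(K) \smallsetminus \{0\} = K^\times$ by (FR0) --- are immediate.
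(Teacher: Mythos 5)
Your overall strategy is the same as the paper's: check the hypergroup and compatibility axioms one by one, using field-likeness for nonemptiness, (FR0) for commutativity, (FR5) for the identity and inverse axioms (the paper does these by a direct case split on $a=0$ versus $a\neq 0$ rather than by multiplying through by $a^{-1}$, but both work, and note you still need the separate observation that $0+c\in K_0$ forces $c=0$ to handle $0\oplus 0$), and (FR2) for distributivity. For associativity you reduce to exactly the same quaternary criterion
\[
(a\oplus b)\oplus c=\{d\in \invfunctor(K)\mid a+b+c+\varepsilon d\in K_0\}=a\oplus(b\oplus c)
\]
that the paper states, and your (FR6)-based argument for the inclusion $(a\oplus b)\oplus c\subseteq\{d\mid a+b+c+\varepsilon d\in K_0\}$ is correct and is more than the paper writes down (it says only ``we leave the details to the reader'').

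The one place where your plan is not yet a proof is the reverse inclusion, and you are right to call it the principal obstacle; but the strategy you describe for it does not obviously close. The difficulty is structural: field-likeness hands you an element of $a\oplus b$ and, separately, an element $e$ with $e+c+\varepsilon d\in K_0$, but you must produce a single $e$ in the intersection. If you feed the given null sum $(a+b)+(c+\varepsilon d)\in K_0$ and a field-likeness witness into (FR6) and then expand with (FR7) and (FR2), every route I can see terminates in a statement of the shape $a\cdot s+b\cdot s\in K_0$ with $s=e+c+\varepsilon d$. At that point you are stuck: the fuzzy ring axioms let you expand products of units over sums but give you no way to refactor $a\cdot s+b\cdot s$ as $(a+b)\cdot s$ (neither $a+b$ nor $s$ is known to be a unit), and hence no ``unit cancellation'' is available to extract $s\in K_0$. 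So the final step of your plan, as described, would fail; a genuinely different choice of witness $e$ (or a different use of the axioms) is needed. To be fair, the paper suppresses exactly this detail as well, so you have correctly located, and partially filled, the only nontrivial point of the lemma --- but the reverse inclusion still needs an actual argument rather than the expand-and-cancel recipe you propose.
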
 

\begin{proof}
  Since $\invfunctor(K)$ is a commutative (multiplicative) monoid, we only have to check that
  $a\oplus b$ is non-empty, that $(\invfunctor(K),\oplus)$ is a canonical hypergroup and that
  $\oplus$ and $\times$ are compatible.

  That the set $a\oplus b$ is non-empty can be seen as follows. Since $K$ is field-like, there
  exists an element $c\in K^\times\cup\{0\}$ such that $a+b+c\in K_0$. Since multiplication by $\epsilon$
  leaves $K^\times\cup\{0\}$ invariant, $\epsilon c\in \invfunctor(K)$, and since $\epsilon^2=1$, we
  conclude that $\epsilon c\in a\oplus b$.

  We continue with the proof that $(\invfunctor(K),\oplus)$ is a canonical hypergroup. Note that
  property (5) of Definition \ref{canonicalhypergroup} is implied by the other axioms of a
  hyperring. Property (1) (commutativity) follows from (FR0).

  As a preparation, we note that $0+c\in K_0$ if and only if $c=0$. Indeed, if $c=0$, then
  $0+c=0\in K_0$. If $c\neq 0$, then (FR5) implies that $0=\varepsilon c$, which is impossible.

  Property (2) (identity) requires a separation of cases. For $a=0$ and $c\in \invfunctor(K)$, we
  have $a+0+\varepsilon c\in K_0$ if and only if $c=0$, by the previous observation. Thus
  $0\oplus 0=\{0\}$. For $a,c\in \invfunctor(K)$ with $a\neq0$, we have $a+0+\varepsilon c\in K_0$
  if and only if $\varepsilon c=\varepsilon a$, by (FR5). This is equivalent to $c=a$ since
  $\varepsilon^2=1$ by (FR3). Thus $a\oplus 0=\{a\}$ for $a\neq0$.

  Property (3) (inverses) also requires a separation of cases. Since $K_0$ and $K^\times$ are
  disjoint, we have $0\in 0\oplus c$, i.e., $0+c+\varepsilon 0\in K_0$, if and only if $c=0$. For a
  nonzero $a\in \invfunctor(K)$, we have $0\in a\oplus c$, i.e., $a+c=a+c+\varepsilon 0\in K_0$, if
  and only if $c=\varepsilon a$ by (FR5).

  Property (4) (associativity) follows from the following:
  \[(a\oplus b)\oplus c=\{d\in \invfunctor(K) \mid a+b+c+\epsilon d\in K_0\}=a\oplus (b\oplus c),\]
  where we leave the details to the reader.

  Compatibility of $\times$ and $\oplus$, i.e., $a\times (b\oplus c)=a\times b\oplus a\times c$,
  follows for nonzero $a$ from (FR2) and is obvious for $a=0$.
\end{proof}

Let $K$ and $L$ be field-like fuzzy rings and $f:K\to L$ a weak morphism.  We extend $f$ from a map
$K^\times \to L^\times$ to a map
$\invfunctor(f): \invfunctor(K) = K^\times \cup \{0\} \to \invfunctor(L) = L^\times \cup \{0\}$ by
sending 0 to 0.

\begin{lemma}
The map $\invfunctor(f):\invfunctor(K)\to \invfunctor(L)$ is a morphism of hyperfields. 
\end{lemma}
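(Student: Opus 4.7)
My plan is to verify the four conditions that make a map of hyperfields: preservation of $0$ and $1$, multiplicativity, and the containment $\invfunctor(f)(a\oplus b)\subseteq \invfunctor(f)(a)\oplus\invfunctor(f)(b)$. The preservation of $0$ holds by construction (we extended $f$ by sending $0$ to $0$), and preservation of $1$ holds because $f$ is a group homomorphism $K^\times\to L^\times$. Multiplicativity on nonzero pairs is immediate from $f$ being a group homomorphism; when either argument is $0$ both sides vanish because $0$ is absorbing in $\invfunctor(K)$ and $\invfunctor(L)$.

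The substantive step is the additive containment. Recall that the earlier discussion of morphisms of fuzzy rings shows that any weak morphism $f$ must satisfy $f(\varepsilon_K)=\varepsilon_L$. Given $c\in a\oplus b$, by the definition of $\oplus$ we have $a+b+\varepsilon_K c\in K_0$, and I want to conclude $f(a)+f(b)+\varepsilon_L f(c)\in L_0$, which would give $\invfunctor(f)(c)\in\invfunctor(f)(a)\oplus\invfunctor(f)(b)$.

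The main obstacle is that the weak morphism axiom only produces null sums from null sums of \emph{units}, so I need to carry out a small case analysis to handle the zero elements. When $a=0$ and $b=0$, the only $c$ in $a\oplus b$ is $c=0$ (as was shown in the preceding lemma's verification of the identity axiom), and the containment is trivial. When exactly one of $a,b$ is zero, say $a=0$ and $b\in K^\times$, we saw $a\oplus b=\{b\}$, and the containment reduces to $f(b)\in\{f(b)\}$. When $a,b\in K^\times$, I split on whether $c$ is zero: if $c=0$, the hypothesis $a+b\in K_0$ (i.e., the sum of two units is null) lets me apply the weak morphism axiom directly to conclude $f(a)+f(b)\in L_0$, i.e., $0\in f(a)\oplus f(b)$; if $c\in K^\times$, then $a$, $b$, and $\varepsilon_K c$ are all units, so the weak morphism axiom gives $f(a)+f(b)+f(\varepsilon_K c)\in L_0$, and using $f(\varepsilon_K c)=f(\varepsilon_K)f(c)=\varepsilon_L f(c)$ I conclude $f(c)\in f(a)\oplus f(b)$.

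No further identities of hyperfields need to be verified (reversibility is automatic in a canonical hypergroup, and additive inverses are preserved because $f(\varepsilon_K)=\varepsilon_L$), so this case analysis completes the proof.
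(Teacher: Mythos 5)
Your proof is correct and follows essentially the same route as the paper: translate $c\in a\oplus b$ into the null sum $a+b+\varepsilon_K c\in K_0$, apply the weak morphism condition, and use $f(\varepsilon_K)=\varepsilon_L$ to conclude $f(c)\in f(a)\oplus f(b)$. The only difference is cosmetic: where you run an explicit case analysis on which of $a,b,c$ are zero, the paper simply observes that since $0$ is the additive neutral element and $f(0)=0$, zero summands can be dropped, so the weak morphism implication holds verbatim for sums over $K^\times\cup\{0\}$.
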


\begin{proof}
  For the notational convenience, we let $g :=\invfunctor(F)$, $k :=\invfunctor(K)$, and
  $\ell :=\invfunctor(L)$. Then clearly, $g(0)=0$, $g(1)=1$ and $g(ab)=g(a)g(b)$. We continue with
  verifying additivity. Since $0$ is the neutral element for addition, we have that
  $\sum a_i\in K_0$ implies $\sum f(a_i)\in L_0$ for $a_i\in K^\times\cup\{0\}$ if we define
  $f(0)=0$. This allows us to avoid a separation of cases for zero elements. We conclude that if
  $c\in a \oplus b$ for $a,b,c\in k$, then $a+b+\epsilon c\in K_0$ and thus
  $f(a)+f(b)+\epsilon f(c)\in L_0$ and thus $g(c)\in g(a)\oplus g(b)$, as desired.
\end{proof}

Clearly, the construction $\invfunctor$ sends the identity map to the identity map and respects
compositions of morphisms. Thus it yields a functor $\invfunctor:\fuzzyweakast\to \hyperfields$.  It
is immediate from the constructions of $\functor$ and $\invfunctor$ that any hyperfield $k$ is
canonically isomorphic to $\invfunctor\circ\functor(k)$.  By this discussion, we have:

\begin{lemma}\label{lem:iso1}
There is a natural isomorphism between
$\invfunctor \circ \functor$ and the identity functor on $\hyperfields$
\end{lemma}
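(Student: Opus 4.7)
The plan is to exhibit an explicit natural isomorphism $\eta: \mathrm{id}_{\hyperfields} \Rightarrow \invfunctor \circ \functor$ whose component at a hyperfield $k$ is the singleton inclusion. For each hyperfield $k$, define
\[
\eta_k: k \longrightarrow \invfunctor(\functor(k)), \qquad x \longmapsto \{x\}.
\]
By Lemma~\ref{units-are-singletons}, the group of units of $\functor(k)$ is $\{\{x\}\mid x\in k^\times\}$, and the zero of $\functor(k)$ is $\{0\}$, so the underlying set of $\invfunctor(\functor(k))$ is exactly $\{\{x\}\mid x\in k\}$. Hence $\eta_k$ is a bijection of sets, and it visibly preserves $0$, $1$, and multiplication since $\{x\}\times\{y\}=\{xy\}$ in $\functor(k)$.

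The only nontrivial point is checking that $\eta_k$ intertwines the two hyperadditions, for which reversibility in $k$ is the key input. By the definition of $\invfunctor$, for $x,y\in k$,
\[
\{x\}\oplus\{y\} \;=\; \bigl\{\{c\}\in \invfunctor(\functor(k)) \,\bigm|\, \{x\}+\{y\}+\varepsilon\{c\}\in \functor(k)_0\bigr\},
\]
and since $\varepsilon=\{-1\}$ and the sum on the right unfolds (via Construction \ref{con:op-ext}) to the hypersum $x+y+(-c)\subset k$, membership in $\functor(k)_0$ means $0\in x+y+(-c)$. By reversibility (axiom (5) of Definition \ref{canonicalhypergroup}), this is equivalent to $c\in x+y$. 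Thus $\{x\}\oplus\{y\}=\{\{c\}\mid c\in x+y\}$, which is exactly $\eta_k(x+y)$. Therefore $\eta_k$ is an isomorphism of hyperfields.

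For naturality, let $f:k\to k'$ be a hyperfield morphism. By the definition of $\functor(f)$ on singletons, $\functor(f)(\{x\})=\{f(x)\}$, and $\invfunctor(\functor(f))$ is just the restriction of $\functor(f)$ to units together with $0$. Hence
\[
\invfunctor(\functor(f))\circ\eta_k(x) \;=\; \{f(x)\} \;=\; \eta_{k'}\circ f(x),
\]
so the naturality square commutes. I expect no real obstacle here: the only step that is more than unwinding definitions is the verification of additivity, and that is a one-line application of reversibility, which is why the authors call the isomorphism immediate.
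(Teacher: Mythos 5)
Your proof is correct and is precisely the verification the paper leaves implicit when it asserts that the isomorphism $k\cong\invfunctor\circ\functor(k)$ is ``immediate from the constructions'': the component $x\mapsto\{x\}$, the identification of the underlying set via Lemma~\ref{units-are-singletons}, the reversibility argument showing $\{x\}\oplus\{y\}=\{\{c\}\mid c\in x+y\}$ (the same manipulation the paper uses in its fullness proof), and the naturality square. No gaps.
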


We next show that there is also a natural isomorphism between the identity of $\fuzzyweakast$ and
$\functor \circ \invfunctor$.  By Lemma \ref{units-are-singletons} and the definition of
$\invfunctor$, for any fuzzy ring $K$ there are canonical isomorphisms of multiplicative monoids
$K^\times \cong \invfunctor(K)^\times \cong (\functor\circ \invfunctor(K))^\times$.  

\begin{lemma}\label{lem:iso2}
The isomorphism of multiplicative groups $K^\times \cong (\functor\circ \invfunctor(K))^\times$ is
a weak isomorphism.
\end{lemma}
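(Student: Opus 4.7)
The plan is to unpack both sides of the claimed weak isomorphism and observe that the definition of the hyperaddition on $\invfunctor(K)$ is tailored precisely to make the weak-morphism condition into a tautology. Write $\phi\colon K^\times\to (\functor\circ\invfunctor(K))^\times$ for the canonical map $a\mapsto\{a\}$. By Lemma~\ref{units-are-singletons} applied to the hyperfield $\invfunctor(K)$, the units of $\functor(\invfunctor(K))$ are exactly the singletons on $\invfunctor(K)^\times=K^\times$, so $\phi$ is a bijection, and it is evidently a group isomorphism because the multiplication on $\functor(\invfunctor(K))$ restricts to the multiplication of $K$ on singletons.

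To verify the weak-morphism condition in both directions, I would show the equivalence
\[
\sum_{i=1}^n a_i\in K_0 \ \Longleftrightarrow\ \sum_{i=1}^n\{a_i\}\in (\functor\circ\invfunctor(K))_0
\]
for $a_1,\dots,a_n\in K^\times$. For this I would first identify the right-hand sum: by the definition of the addition on $\functor$, $\sum_i\{a_i\}$ is exactly the iterated hypersum $a_1\oplus\cdots\oplus a_n$ computed in $\invfunctor(K)$. Iterating the formula obtained in the proof of associativity for $(\invfunctor(K),\oplus)$ one gets, by an easy induction on $n$,
\[
a_1\oplus\cdots\oplus a_n \;=\; \bigl\{c\in\invfunctor(K)\ \bigm|\ a_1+\cdots+a_n+\varepsilon c\in K_0\bigr\}.
\]
Then by the definition of $K_0$-elements in $\functor(\invfunctor(K))$, the sum $\sum_i\{a_i\}$ lies in $(\functor\circ\invfunctor(K))_0$ iff $0$ belongs to this set, iff $a_1+\cdots+a_n+\varepsilon\cdot 0\in K_0$, iff $a_1+\cdots+a_n\in K_0$ (using the absorbing property $\varepsilon\cdot 0=0$ from (FR1)). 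This simultaneously establishes that $\phi$ and $\phi^{-1}$ satisfy the weak-morphism condition, so $\phi$ is a weak isomorphism.

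I would finish by remarking on naturality: for a weak morphism $f\colon K\to L$ of field-like fuzzy rings, the diagram comparing $\phi_K$ and $\phi_L$ with $f$ and $\functor\circ\invfunctor(f)$ commutes tautologically on units, since both composites send $a\in K^\times$ to $\{f(a)\}$. The only genuine content of the lemma is the iterated-associativity identity above; everything else is bookkeeping. I do not anticipate a real obstacle, only the mild care needed to extend the binary associativity proved in the previous lemma to the $n$-ary statement used here.
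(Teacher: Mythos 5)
Your proof is correct and follows essentially the same route as the paper: both reduce the weak-isomorphism condition to the equivalence $\sum a_i\in K_0 \Leftrightarrow 0\in a_1\oplus\cdots\oplus a_n$ in $\invfunctor(K)$, which gives both directions at once. You merely spell out the $n$-ary hypersum identity $a_1\oplus\cdots\oplus a_n=\{c\mid a_1+\cdots+a_n+\varepsilon c\in K_0\}$ that the paper uses implicitly (its binary/ternary case appears in the associativity step of the earlier lemma, with details left to the reader).
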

\begin{proof}
  Let $L$ denote the fuzzy ring $\functor\circ \invfunctor(K)$, and write $\alpha$ for the group
  isomorphism $K^\times \cong L^\times$.  Let $a_i$ be elements of $K^\times$. By the construction
  of $\functor\circ\invfunctor(K)$, we have that $\sum \alpha(a_i)$ is null if and only if
  $0\in \sum a_i$ holds in the hyperfield $\invfunctor(K)$, and this in turn is the case if and only
  if $\sum a_i$ is null in $K$. This shows that both $\alpha$ and $\alpha^{-1}$ are weak
  morphisms of fuzzy rings, and hence they are weak isomorphisms.
\end{proof}

\begin{theorem}\label{thm: equivalence of hyperfields and weak fuzzy rings}
The functor $\functor:\hyperfields\to\fuzzyweakast$ is an equivalence of categories.
\end{theorem}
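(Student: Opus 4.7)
The plan is to invoke the standard criterion that a functor is an equivalence of categories if and only if it is fully faithful and essentially surjective. Full faithfulness of $\functor:\hyperfields\to\fuzzyweakast$ was already established earlier in the section (and it was also verified that the essential image lies inside $\fuzzyweakast$), so the only remaining task is essential surjectivity onto $\fuzzyweakast$.

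Essential surjectivity is an immediate consequence of Lemma \ref{lem:iso2}: for every object $K$ of $\fuzzyweakast$, the canonical group isomorphism $K^\times\cong(\functor\circ\invfunctor(K))^\times$ is a weak isomorphism of fuzzy rings, hence exhibits $K$ as isomorphic (in $\fuzzyweakast$) to an object in the image of $\functor$. Combined with full faithfulness, this yields the theorem.

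For a more hands-on presentation, I would instead assemble the object-wise data of Lemmas \ref{lem:iso1} and \ref{lem:iso2} into genuine natural isomorphisms $\eta:\mathrm{id}_{\hyperfields}\Rightarrow\invfunctor\circ\functor$ and $\varepsilon:\functor\circ\invfunctor\Rightarrow\mathrm{id}_{\fuzzyweakast}$ which together realize $\invfunctor$ as a quasi-inverse of $\functor$. Verifying naturality is entirely routine: a weak morphism in $\fuzzyweakast$ is by definition a group homomorphism on units (with the understanding $0\mapsto 0$), and both $\functor$ and $\invfunctor$ act on morphisms through the identifications $R^\times\cong\functor(R)^\times$ and $K^\times\cong\invfunctor(K)^\times$, so the relevant naturality squares commute tautologically once one traces through the definitions. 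The only potential subtlety, namely that the component of $\varepsilon$ at $K$ is a weak isomorphism rather than merely a group isomorphism, is exactly what Lemma \ref{lem:iso2} supplies.

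There is no real obstacle here: all of the genuine content of the theorem has already been carried out in the construction of $\invfunctor$ and in Lemmas \ref{lem:iso1}--\ref{lem:iso2}, and the theorem itself is a formal packaging of those results via the standard equivalence criterion.
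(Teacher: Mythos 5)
Your proposal is correct and follows essentially the same route as the paper, whose proof simply observes that Lemmas \ref{lem:iso1} and \ref{lem:iso2} exhibit $\functor$ and $\invfunctor$ as mutually inverse equivalences. Your alternative phrasing via fully faithful plus essentially surjective is an equivalent formal packaging of the same content.
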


\begin{proof}
  It follows immediately from Lemmas \ref{lem:iso1} and \ref{lem:iso2} that the $\functor$
  and $\invfunctor$ are mutual inverses.
\end{proof}

\begin{remark}
  Let $R$ be a hyperring. Then $\functor(R)$ is a field-like fuzzy ring if and only if
  $R^\times\cup\{0\}$ is a sub-hyperfield of $R$, which means that for all $a,b\in R^\times$, the
  intersection of $a+b$ with $R^\times\cup\{0\}$ is non-empty. In this case,
  $\invfunctor(\functor(R))$ is canonically isomorphic to the sub-hyperfield $R^\times\cup\{0\}$.
\end{remark}

\subsection{Extending the equivalence beyond field-like fuzzy rings}\label{subsection: Extension to partial hyperfields}
The equivalence of categories, given in Theorem \ref{thm: equivalence of
  hyperfields and weak fuzzy rings}, between hyperfields and field-like fuzzy rings admits an
extension to an equivalence between $\fuzzyweak$ and a category of objects very mildly generalizing
hyperfields.  

We define a \emph{partial hyperring} to be a set $R$ equipped with an ordinary binary operation
$\times$ and a set-valued operation $+$, such that these operations satisfy all of the axioms of a
hyperring except that sums $a+b$ are not required to be empty.  A morphism between partial
hyperrings $R$ and $S$ is a multiplicative map $f:R\to S$ with $f(0_R)=0_S$ and $f(1_R)=1_S$ such
that $f(a+_Rb)\subset a+_Sb$.  This defines the category $\parthyperrings$ of partial
hyperrings. The full subcategory $\parthyperfields$ of partial hyperfields consists of all partial
hyperrings $R$ such that every nonzero element is a admits a multiplicative inverse.
  
The construction of the functor $\functor$ extends literally to a functor
$\functor:\parthyperrings\to\fuzzystrong$. Restricting the domain to partial hyperfields and
restricting the codomain to weak morphism of fuzzy rings yields a functor
\[\functor' :\parthyperfields\to\fuzzyweak,\]
which turns out to be an equivalence of categories.  Indeed, it is straightforward to check that the
functor $\invfunctor:\fuzzyweakast\to\hyperfields$ extends to a functor
$\invfunctor' :\fuzzyweak\to\parthyperfields$, inverse to $\functor'$, by the very same construction
as above, with the only difference being that a fuzzy ring $K$ that is not field-like gives rise to
a partial hyperfield $\invfunctor(K)$ that contains elements whose hypersum is empty.

We draw the attention to the following effect: every partial hyperring $R$, and in particular every
hyperring, contains a canonical partial hyperfield, which is $R^\times\cup\{0\}$, which we call the
\emph{unit field of $R$}. For example, the unit field of the integers $\mathbb Z$ is the partial
hyperfield $\{0,\pm 1\}$ for which all sums are defined as in $\mathbb Z$, with exception of $1+1$
and $(-1)+(-1)$, which are empty.

\section{Totally ordered abelian groups, hyperfields, and fuzzy rings}\label{review}

In this section we review two constructions that take a totally ordered abelian group as input.  The
first produces a hyperfield and the second produces a fuzzy ring as output.  We describe the
relation between these two constructions.

\subsection{From totally ordered abelian groups to hyperfields} \label{linearorderedhypergroup}


Let $(\Gamma,\times)$ be a totally ordered abelian group. Viro \cite{viro} observed that, from this,
one can construct a hyperfield $H_\Gamma$ as follows. First adjoin an element $0$ and extend
the ordering to $\Gamma\cup \{0\}$ by $0 < a$ for all $a\in \Gamma$.  The hyperaddition
is
\begin{equation}
  x \oplus y =\left\{ \begin{array}{ll}
                       \max\{x,y\} & \textrm{if $x\neq y$}\\
                       \left[0,x\right]& \textrm{if $x=y$}
\end{array} \right.
\end{equation}
and the multiplication is given by the group multiplication $\times$ of $\Gamma$, with $0$ as
an absorbing element.

When $\Gamma$ is the group $(\mathbb{R},+)$ then the hyperfield $H_\Gamma$ is the tropical
hyperfield denoted $\mathbb{T}$ and the element $0$ is called $-\infty$.

The following has been proven by Viro.

\begin{theorem}[{\cite[$\S 4.7$]{viro}}]
  Let $\Gamma$ be a totally ordered abelian group. Then the hyperfield $H_\Gamma$ is
  doubly-distributive.
\end{theorem}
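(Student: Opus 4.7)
My approach is to describe both sides of the doubly-distributive identity $(a+b)(c+d)=ac+ad+bc+bd$ as explicit subsets of $H_\Gamma$, and then verify equality by a short case analysis on the equality patterns among $a,b,c,d$. To prepare for this, I would first establish two preliminary observations.

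The first is a formula for finite hypersums in $H_\Gamma$: for $x_1,\dots,x_n\in H_\Gamma$ with $M=\max_i x_i$,
\[
x_1+\cdots+x_n\;=\;\begin{cases}\{M\} & \text{if $M$ is attained by exactly one index,}\\ [0,M] & \text{if $M$ is attained by at least two indices.}\end{cases}
\]
This is immediate from the definition of $\oplus$ for $n=2$, and the induction step reduces to the elementary identities $y+y'=\max(y,y')$ for $y\neq y'$ and $[0,M]+y=[0,M]$ for $y\le M$ (the latter by unfolding the set-valued sum: for $y'\in[0,M]$ with $y'\neq y$ one gets $\max(y',y)$, for $y'=y$ one gets $[0,y]\subseteq[0,M]$, and the union over all $y'\in[0,M]$ is $[0,M]$). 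The second preliminary is that for any nonzero $r,s\in\Gamma$ the set-product $r\cdot[0,s]$ equals $[0,rs]$, because multiplication by $r$ is an order-isomorphism of $\Gamma\cup\{0\}$; more generally $[0,a]\cdot[0,c]=[0,ac]$, since any nonzero $z\in[0,ac]$ equals $a\cdot(a^{-1}z)$ with $a^{-1}z\in[0,c]$.

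With these in hand, the main verification splits into cases. If any of $a,b,c,d$ is $0$, the identity collapses to the ordinary single-element distributivity that holds in every hyperring. Assuming all four are nonzero, there are four subcases. If $a\neq b$ and $c\neq d$, one has $a+b=\{\max(a,b)\}$ and $c+d=\{\max(c,d)\}$, so the left-hand side is the singleton $\{\max(a,b)\cdot\max(c,d)\}$; assuming without loss of generality $a>b$ and $c>d$, translation-invariance of the order on $\Gamma$ gives $ac>ad$, $ac>bc$, and $ac>bd$, so the maximum of $\{ac,ad,bc,bd\}$ is uniquely $ac$ and the right-hand side is $\{ac\}$ by the first preliminary. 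If $a=b$ and $c\neq d$ (say $c>d$), the left-hand side is $[0,a]\cdot\{c\}=[0,ac]$, while the right-hand side $ac+ad+ac+ad$ has its maximum $ac$ attained twice, hence equals $[0,ac]$; the symmetric case $a\neq b$, $c=d$ is analogous. Finally, if $a=b$ and $c=d$, the left-hand side is $[0,a]\cdot[0,c]=[0,ac]$ by the second preliminary, and the right-hand side $ac+ac+ac+ac$ is $[0,ac]$ by the first.

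The main obstacle is the first preliminary, since hyperaddition involving the intervals $[0,x]$ has to be unfolded carefully; essentially all information used in the individual cases is packaged in that formula. Once it is established, the remainder of the argument is a direct unwinding of definitions using only translation-invariance of the order on $\Gamma$.
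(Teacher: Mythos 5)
Your proof is correct. The paper does not actually prove this statement --- it is quoted from Viro's paper and used as a black box --- so there is nothing internal to compare against; your direct verification (compute both sides as explicit subsets via the ``max attained once vs.\ at least twice'' formula for iterated hypersums, then split into cases according to which of $a=b$, $c=d$ hold) is the natural and expected argument, and every case checks out. One small point worth tightening: in the induction establishing your first preliminary, when the partial sum is an interval $[0,M]$ you only record the identity $[0,M]+y=[0,M]$ for $y\le M$; for the general formula you also need the (equally trivial) case $[0,M]+y=\{y\}$ when $y>M$, which restores the ``maximum attained exactly once'' alternative. This case never arises in the four products $ac,ad,bc,bd$ as you order them, but the preliminary is stated for arbitrary $x_1,\dots,x_n$, so the induction should mention it.
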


\subsection{From totally ordered abelian groups to fuzzy rings}

There is a fuzzy ring analogue of the construction $\Gamma \mapsto H_\Gamma$ (see
\cite{dress1992valuated} and \cite{dress2011algebraic}), which we now review.  The input, once
again, is a totally ordered abelian group $(\Gamma, \times)$ and the output is a fuzzy ring.

Let $K_\Gamma\subset \mathcal{P}(\Gamma\cup \{0\})$ be the set consisting of all singletons
and all intervals $[0, a]$.  We define a multiplication operation $\boxtimes$ on $K_\Gamma$
induced from the multiplication $\times$ of $\Gamma$, extended to $\Gamma \cup \{0\}$ with
$0$ as an absorbing element.  The addition operation $\boxplus$ is defined as follows: for
$A,B \in K_\Gamma$,
\begin{equation}
  A\boxplus B =\left\{ \begin{array}{lllll}
                         \max\{a,b\} & \textrm{if $A=\{a\}$, $B=\{b\}$}\\
                         \left[0,b\right]& \textrm{if $A=\{a\}$, $B=[0,b]$, and $a\leq b$}\\
                         a & \textrm{if $A=\{a\}$, $B=[0,b]$, and $b < a$}\\
                         A & \textrm{if $A=[0,a]$, $B=[0,b]$, and $b < a$}\\
                         B & \textrm{if $A=[0,a]$, $B=[0,b]$, and $a\leq b$}
\end{array} \right.
\end{equation}
The set of null elements $(K_\Gamma)_0$ consists of those subsets that contain $0$, and the
element $\epsilon$ is the singleton consisting of the identity element of $\Gamma$.  Then
$K_\Gamma = (K_\Gamma;\boxplus; \boxtimes; \epsilon; (K_\Gamma)_0)$ is a fuzzy ring.  

\begin{remark}
  One may notice that the above construction of Dress and Wenzel is similar to the functor
  $\functor$ which we introduced in $\S \ref{functor}$. We will use the fact that $H_{\Gamma}$
  is doubly-distributive in $\S \ref{demi}$ to generalize the construction $K_\Gamma$ of Dress and
  Wenzel by constructing a faithful functor from the category of doubly-distributive hyperfields to
  the category of fuzzy rings.
\end{remark}

\subsection{Zariski systems}

A notion of Zariski systems for fuzzy rings was introduced in \cite{dress2011algebraic}. In this
subsection, we show that for a totally ordered abelian group $\Gamma$, there is a canonical
injective homomorphism of fuzzy rings from $K_\Gamma$ to $\functor(H_\Gamma)$. Furthermore, this
injection pushes forwards a Zariski system on $K_\Gamma$ to a Zariski system in $\functor(H_\Gamma)$
and the sets defined by Zariski systems coincide. For the notational convenience, we let
$\functor(H_\Gamma):=\widetilde{H_\Gamma}$.

\begin{proposition}\label{inj}
  Let $\Gamma$ be a linearly ordered abelian group and $H_\Gamma$ be the hyperfield as above. Then
  the fuzzy ring $K_{\Gamma}$ is the sub-fuzzy ring of $\widetilde{H_\Gamma}$.
\end{proposition}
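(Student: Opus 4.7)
I would define $\iota \colon K_\Gamma \to \functor(H_\Gamma)$ by the tautological inclusion: each element of $K_\Gamma$ is either a singleton $\{a\}$ or an interval $[0,a]$, and in either case a non-empty subset of $H_\Gamma = \Gamma \cup \{0\}$, which we send to itself inside $\functor(H_\Gamma) = \mathcal{P}^*(H_\Gamma)$. Injectivity is immediate, and the distinguished data match routinely: $\iota$ sends $0$ and $1$ to $\{0\}$ and $\{1\}$; the null subset of each fuzzy ring is the collection of non-empty subsets of $H_\Gamma$ containing $0$; and since $a \oplus a = [0,a]$ in $H_\Gamma$ yields $-a = a$, the $\varepsilon$ of $\functor(H_\Gamma)$ equals $\{1_\Gamma\}$, matching that of $K_\Gamma$.

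The substantive content is to verify that $\iota$ intertwines the operations $\boxplus$, $\boxtimes$ of $K_\Gamma$ with the operations $+$, $\times$ on $\functor(H_\Gamma)$ induced from $H_\Gamma$. For multiplication this is direct: since the group law on $\Gamma$ preserves its ordering, $\{a\} \cdot [0,b] = [0, ab]$ and $[0,a] \cdot [0,b] = [0,ab]$ as subsets of $H_\Gamma$, matching $\boxtimes$ exactly.

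Addition requires a case analysis. For two singletons, $\{a\} + \{b\}$ equals the hypersum $a \oplus b$, which is $\{\max(a,b)\}$ when $a \neq b$ and $[0,a]$ when $a = b$. For $\{a\} + [0,b]$, I would compute the union $\bigcup_{c \in [0,b]} a \oplus c$ by splitting on $c < a$, $c = a$, $c > a$: if $b < a$ every such $c$ contributes $\{a\}$, and if $a \leq b$ the doubled term $a \oplus a = [0,a]$ together with the singleton contributions assembles into $[0,b]$. The interval-plus-interval case reduces similarly: for $[0,a] + [0,b]$ with $b \leq a$, the choice $c = a$, $d = 0$ contributes $\{a\}$ while doublings $c \oplus c$ for $c \in [0,b]$ generate $[0,b]$, and all other contributions are dominated, yielding $[0,a]$.

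The main obstacle I anticipate is bookkeeping in this addition case analysis --- specifically, keeping track that the doubling identity $a \oplus a = [0,a]$ is the mechanism that creates interval elements of $K_\Gamma$ out of singletons, and verifying that the (a priori large) unions over $[0,b]$ in fact collapse into a single element of $K_\Gamma$. Once all cases are verified, $\iota$ preserves $0$, $1$, multiplication, addition, and nullity on the nose, exhibiting it as a set-theoretically injective strong morphism that realizes $K_\Gamma$ as a sub-fuzzy ring of $\functor(H_\Gamma)$.
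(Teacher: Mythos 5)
Your proposal is correct and uses the same map as the paper (the tautological inclusion $A\mapsto A$), but the verification strategy is genuinely different. The paper never checks that addition is preserved: it exploits the fact that a strong morphism (Definition \ref{def:strong-morphism}) only requires multiplicativity on units and the null-sum condition, so it verifies condition (2) directly by characterizing when $\sum a_k\boxtimes b_k$ is null in $K_\Gamma$ and observing that the corresponding sum in $\widetilde{H_\Gamma}$ then contains $0$. You instead prove the stronger statement that $K_\Gamma$ is closed under the induced operations of $\functor(H_\Gamma)$ and that $\boxplus$, $\boxtimes$, $0$, $1$, $\varepsilon$ and nullity all match on the nose; the strong-morphism axioms then follow formally since $\sum\iota(a_k)\times\iota(b_k)=\iota\bigl(\sum a_k\boxtimes b_k\bigr)$. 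Your route costs the addition case analysis but buys more: it is exactly the computation the paper defers to its proof that $\reducedfunctor(H_\Gamma)\cong K_\Gamma$ (Theorem D), and it avoids the delicate classification of null sums in $K_\Gamma$ (where one must also account for a maximum attained once by an interval $[0,c]$, a case the paper's own argument glosses over). Two small points: the displayed definition of $\boxplus$ must be read with $\{a\}\boxplus\{a\}=[0,a]$ (otherwise (FR5) fails for $K_\Gamma$), which is the reading you adopt; and in the case $[0,a]+[0,b]$ with $b<a$ the contributions you list ($\{a\}$ and the doublings over $[0,b]$) only produce $\{a\}\cup[0,b]$ --- you also need the singletons $c\oplus 0=\{c\}$ for $c\in(b,a)$, or simply the single doubling $a\oplus a=[0,a]$, to fill the interval. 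This is a trivial repair, not a gap.
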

\begin{proof}
  This is straightforward. For the notational convenience, let $\widetilde{H_{\Gamma}}:=H$. Define
  the following map:
  \[
  \mathit{i}:K_\Gamma \longrightarrow H, \quad A \mapsto A.
  \]
  By the definition of $\mathit{i}$ and $H$, $\mathit{i}$ is clearly injective. We also have
  $\mathit{i}(e)=\{e\}$, where $e$ is the identity of $\Gamma$ and
  $\mathit{i}(0)=\{0\}$. One can easily check that since the definition of addition and
  multiplication is exactly same as $H$, we have $\mathit{i}(ab)=\mathit{i}(a)\mathit{i}(b)$
  $\forall a,b \in K_{\Gamma}$. Finally, suppose that
  $\sum_{k=1}^na_k\times b_k \in (K_\Gamma)_0=K_I \cup \{0\}$. This implies that all
  $a_k \times b_k =0$ or there exists $m \neq r$ such that
  $a_k\times b_k\leq a_m\times b_m =a_r \times b_r$ for $k \neq m,r$. In the first case, we clearly
  have $\sum_{k=1}^n\mathit{i}(a_k)\times \mathit{i}(b_k) \in H_0$. In the second case, we have
  $\mathit{i}(a_k)\times \mathit{i}(b_k)\leq \mathit{i}(a_m)\times \mathit{i}(b_m) =\mathit{i}(a_r)
  \times \mathit{i}(b_r)$
  for $k \neq m,r$. Therefore, we have $\sum_{k=1}^n\mathit{i}(a_k)\times \mathit{i}(b_k) \in
  H_0$. This proves that $\mathit{i}$ is a strong morphism of fuzzy rings.
\end{proof}

In what follows we use the same notations and terms as in \cite{dress2011algebraic}. To recall, a Zariski system with coefficients in a fuzzy ring $K$ is a triple $(M,K,\mathcal{F})$ where $M$ is a set and $\mathcal F$ is a multiplicatively closed set of functions from $M$ to $K$ that contains for every $a\in M$ a function $f$ such that $f(a)\notin K_0$. Heuristically, one should think of $M$ as a variety over $K$ and of $\mathcal{F}$ as its set of regular functions.

Let $\mathcal{S}:=(M,K_{\Gamma},\mathcal{F})$ be a Zariski system. Since $\mathcal{F}$ is a subset
of $K_{\Gamma}^M$ (a set of maps from $M$ into $K_{\Gamma}$), by the injection $i$ in Proposition
\ref{inj}, we have the following map:
\begin{equation}
  \varphi:K_{\Gamma}^M \longrightarrow \widetilde{H_{\Gamma}}^M, \quad f \mapsto i\circ f:=\tilde{f}.
\end{equation}
Let $\mathcal{F}':=\varphi(\mathcal{F})$. Then we have the following.

\begin{lemma}\label{zariskisystem}
  $S':=(M,\widetilde{H_{\Gamma}}, \mathcal{F}')$ is a Zariski system.
\end{lemma}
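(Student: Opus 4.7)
The plan is to verify directly the two defining properties of a Zariski system for the triple $(M,\widetilde{H_\Gamma},\mathcal{F}')$, transporting each property from $\mathcal{S}=(M,K_\Gamma,\mathcal{F})$ via the injection $i:K_\Gamma\hookrightarrow\widetilde{H_\Gamma}$ of Proposition \ref{inj}. Recall that we must check that $\mathcal{F}'$ is a multiplicatively closed subset of $\widetilde{H_\Gamma}^M$, and that for every $a\in M$ there exists $\tilde{f}\in\mathcal{F}'$ with $\tilde{f}(a)\notin(\widetilde{H_\Gamma})_0$.

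First I would handle multiplicative closure. Since pointwise multiplication in $K_\Gamma^M$ and in $\widetilde{H_\Gamma}^M$ is defined componentwise and $i$ is multiplicative (being a strong morphism of fuzzy rings), the map $\varphi$ is a homomorphism of multiplicative monoids. Hence given $\tilde{f}_1=\varphi(f_1)$ and $\tilde{f}_2=\varphi(f_2)$ in $\mathcal{F}'$ with $f_1,f_2\in\mathcal{F}$, we have $\tilde{f}_1\cdot \tilde{f}_2=\varphi(f_1\cdot f_2)$, and $f_1\cdot f_2\in\mathcal{F}$ since $\mathcal{F}$ is multiplicatively closed; therefore $\tilde{f}_1\cdot \tilde{f}_2\in\mathcal{F}'$.

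Next I would verify the non-degeneracy condition. Fix $a\in M$. Since $\mathcal{S}$ is a Zariski system, there exists $f\in\mathcal{F}$ with $f(a)\notin (K_\Gamma)_0$. The key observation is that $i$ reflects null elements: by the explicit description, $(K_\Gamma)_0$ consists of those elements of $K_\Gamma$ (viewed as nonempty subsets of $\Gamma\cup\{0\}$) that contain $0$, and $(\widetilde{H_\Gamma})_0$ consists of the nonempty subsets of $H_\Gamma$ that contain $0$. Since $i$ is the set-theoretic inclusion, $i(f(a))=f(a)$ as a subset, so $f(a)\notin(K_\Gamma)_0$ forces $\tilde{f}(a)=i(f(a))\notin(\widetilde{H_\Gamma})_0$. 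Thus the witness function $\tilde{f}=\varphi(f)\in\mathcal{F}'$ has the required property.

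I do not expect any serious obstacle: once Proposition \ref{inj} establishes that $i$ is an injective strong morphism of fuzzy rings, both verifications amount to unwinding definitions. The only subtlety worth spelling out is that the reflection of null elements used in the second step relies on the concrete pointwise description of $(K_\Gamma)_0$ and $(\widetilde{H_\Gamma})_0$ rather than just the abstract fact that $i$ is a strong morphism (strong morphisms preserve but need not reflect nullity); this follows for free from the injectivity of $i$ as a map of underlying sets together with the description of null elements as subsets containing $0$.
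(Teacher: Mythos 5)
Your proof is correct and follows essentially the same route as the paper's: multiplicative closure is transported pointwise through the strong morphism $i$, and the non-degeneracy condition is checked by noting that $i$, being the set-theoretic inclusion, sends non-null elements to non-null elements since nullity in both $K_\Gamma$ and $\widetilde{H_\Gamma}$ means exactly ``the subset contains $0$.'' The paper leaves this last point as ``one can easily check''; you have simply spelled it out, which is a welcome addition rather than a deviation.
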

\begin{proof}
  Let $H:=\widetilde{H_{\Gamma}}$.
  \begin{itemize}
  \item[(Z1)] Suppose that $\tilde{f},\tilde{g} \in \mathcal{F}'$. For any $a \in M$, since
    $\mathit{i}$ is a strong morphism, we have
    \[
    \tilde{f}(a)\times \tilde{g}(a)=\mathit{i}(f(a))\times \mathit{i}(g(a))=\mathit{i}(f(a)\times
    g(a))=\mathit{i}((f\times g)(a))=\mathit{i}\circ (f\times g)(a).
    \]
    But, since $\mathcal{S}$ is a Zariski system, $f \times g \in \mathcal{S}$ and hence
    $\tilde{f}\times \tilde{g} \in \mathcal{F}'$.
  \item[(Z2)] For $a \in M$, we have $f \in \mathcal{M}$ such that
    $f(a) \in K_{\Gamma} \backslash (K_\Gamma)_0$. Then one can easily check that
    $\tilde{f}(a) \in H \backslash H_0$.\qedhere
\end{itemize}
\end{proof}

The following proposition shows that $\mathcal{S}$ and $\mathcal{S}'$ define the same `solution set'. 

\begin{proposition}
  Let $\mathcal{S}:=(M,K_{\Gamma},\mathcal{F})$ be a Zariski system and $\mathcal{T}$ be a subset of
  $\mathcal{F}$. Let $S'$ be the Zariski system as in Lemma \ref{zariskisystem}. Then
  \[
  Z(\mathcal{T})=Z(\varphi(\mathcal{T})).
  \]
\end{proposition}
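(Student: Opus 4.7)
The plan is to unwind the definition of the zero locus $Z(-)$ and exploit the fact that the embedding $\mathit{i}: K_\Gamma \hookrightarrow \widetilde{H_\Gamma}$ of Proposition \ref{inj} not only preserves but also \emph{reflects} null elements. Recall that in Dress's framework a point $a\in M$ lies in $Z(\mathcal T)$ precisely when $f(a)$ is a null element for every $f\in \mathcal T$, and similarly for $Z(\varphi(\mathcal T))$ in the fuzzy ring $\widetilde{H_\Gamma}$. Since $\varphi(f)=\mathit{i}\circ f$ by definition, we have $(\varphi(f))(a)=\mathit{i}(f(a))$, so the whole statement reduces to the claim that for any $A\in K_\Gamma$,
\[
A\in (K_\Gamma)_0 \quad \Longleftrightarrow \quad \mathit{i}(A) \in \widetilde{H_\Gamma}_0.
\]

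First I would verify this key bi-implication. The forward direction is automatic from Proposition \ref{inj} since strong morphisms send nulls to nulls. For the reverse direction, I would simply unpack definitions: an element of $(K_\Gamma)_0$ is by construction a singleton $\{0\}$ or an interval $[0,a]$, i.e., exactly those $A\in K_\Gamma$ which contain $0\in\Gamma\cup\{0\}$. Similarly, $\widetilde{H_\Gamma}_0$ consists of those nonempty subsets of $H_\Gamma$ that contain $0$. Since $\mathit{i}$ is the identity on underlying subsets, $0\in \mathit{i}(A)$ holds if and only if $0\in A$, giving the reverse implication.

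Then I would chain the equivalences: $a\in Z(\mathcal T)$ iff $f(a)\in (K_\Gamma)_0$ for every $f\in\mathcal T$, iff $\mathit{i}(f(a))\in \widetilde{H_\Gamma}_0$ for every $f\in\mathcal T$ (by the observation above), iff $\tilde f(a)\in \widetilde{H_\Gamma}_0$ for every $\tilde f\in \varphi(\mathcal T)$, iff $a\in Z(\varphi(\mathcal T))$.

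There is no substantive obstacle here; the proof is essentially a bookkeeping exercise once one identifies that $\mathit{i}$ is a set-theoretic inclusion whose image meets $\widetilde{H_\Gamma}_0$ in exactly $(K_\Gamma)_0$. The only point requiring a moment of care is to make sure that $\mathit{i}$ reflects (not just preserves) the null predicate, which is what was checked in the first paragraph; once this is in hand, the rest is a one-line chain of equivalences.
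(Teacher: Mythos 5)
Your proof is correct and follows essentially the same route as the paper's: both arguments reduce the claim to the observation that nullity in $K_\Gamma$ and in $\widetilde{H_\Gamma}$ each mean ``contains $0$'' and that $\mathit{i}$ is the identity on underlying subsets, so it preserves and reflects nulls. Your write-up is if anything slightly more careful in isolating the reflection of nulls as the one point needing verification.
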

\begin{proof}
  For notational convenience, we write $K=K_\Gamma$ and $H=\widetilde{H_{\Gamma}}$. Suppose that
  $a \in Z(\mathcal{T})$. This means that $f(a) \in K_0$ for all $f \in \mathcal{F}$, hence
  $0 \in f(a)$ and $0 \in \tilde{f}(a)$. It follows that
  $a \in Z(\varphi(\mathcal{T}))$. Conversely, suppose that $a \in Z(\varphi(\mathcal{T}))$. This
  implies that $\mathit{i}(f(a)) \in H_0$ for all $f \in \mathcal{F}$. But, this happens only if
  $0 \in f(a)$ and hence $a \in Z(\mathcal{T})$.
\end{proof}


\section{From doubly-distributive hyperfields to partial demifields and fuzzy rings} \label{demi}

In this section we restrict our attention to the subcategory $\ddhyperfields$ of doubly-distributive
hyperfields with strict morphisms. We construct the reduced variant
\[
\reducedfunctor:  \ddhyperfields \to \fuzzystrong
\]
of the functor $\functor$, along with the factorization
$\reducedfunctor = \demifunctorB \circ \demifunctorA$ through a subcategory of partial demifields
asserted in Theorem \ref{thm:dd-factorization}.

\begin{remark}
We remark that the four main examples in \cite{baker2016matroids} are all doubly-distributive. 
\end{remark}

\subsection{The construction of \texorpdfstring{$\reducedfunctor$}{F-bar}}

Let $F$ be a hyperfield, and let $S(F)\subset \mathcal{P}(F)^*$ denote the set of all non-empty subsets of
$F$ formed by taking hyperaddition sums of finitely many elements of $F$, i.e.,
\[
 S(F) := \left\{A \subseteq F \mid A = \sum_{i=1}^n a_i, \:\: a_i\in F,\: n\in\mathbb{N}\right\}.
\]
Recall that we defined binary operations $+$ and $\times$ on
$\mathcal{P}(F)^*$ by the formulae
\begin{align*}
A + B & :=  \bigcup_{a\in A, \: b \in B} a +_F b, \\
A \times B & = \{a \times_F b \mid a\in A, \: b\in B\}.
\end{align*}

\begin{lemma}
Let $F$ be a hyperfield.  
\begin{enumerate}
\item The binary operation $+$ on $\mathcal{P}(F)^*$ restricts to a binary operation on $S(F)$.
\item If $F$ is doubly-distributive then the binary operation $\times$ on $\mathcal{P}(F)^*$ restricts to a
  binary operation on $S(F)$, and $(S(F),+, \times )$ is a semiring.
\item If $f:F \longrightarrow F'$ is a strict homomorphism of doubly-distributive hyperfields then
  the induced map $S(F) \to S(F')$ is a semiring homomorphism.
\end{enumerate}
\end{lemma}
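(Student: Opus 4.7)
The plan is to reduce each of the three claims to an inductive extension of the binary hyperring axioms. Part (1) is immediate from associativity of hyperaddition on $\mathcal{P}^*(F)$: if $A = a_1 +_F \cdots +_F a_n$ and $B = b_1 +_F \cdots +_F b_m$, then $A + B = a_1 + \cdots + a_n + b_1 + \cdots + b_m$ is again a finite hyperaddition sum, so it lies in $S(F)$.

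For part (2), the central technical step is to establish the identity
\[
(a_1 + \cdots + a_n)(b_1 + \cdots + b_m) \;=\; \sum_{i,j} a_i b_j
\]
in $\mathcal{P}^*(F)$, valid for all $a_i, b_j \in F$. I would prove this by induction on $n + m$. The $(1, m)$ and $(n, 1)$ cases follow from iterated application of ordinary hyperring distributivity $a(b+c) = ab + ac$, together with the observation that for a singleton $a$ and a subset $U$ one has $a(U + b) = aU + ab$. For the genuine inductive step, write the first sum as $A + a_n$ with $A = a_1 + \cdots + a_{n-1}$ and the second as $B + b_m$ with $B = b_1 + \cdots + b_{m-1}$, so that
\[
(A + a_n)(B + b_m) \;=\; \bigcup_{x \in A,\ z \in B} (x + a_n)(z + b_m).
\]
One then applies binary double-distributivity to each factor $(x + a_n)(z + b_m) = xz + xb_m + a_n z + a_n b_m$, pulls the terms that are constant in $x, z$ out of the union (using $\bigcup_\alpha (X(\alpha) + Y) = (\bigcup_\alpha X(\alpha)) + Y$ whenever $Y$ is independent of $\alpha$), and finally applies the inductive hypothesis to subsums with fewer factors. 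Granting this identity, $\times$ closes on $S(F)$. The commutative monoid axioms for $+$ and $\times$ are inherited from $\mathcal{P}^*(F)$; the singletons $\{0\}$ and $\{1\}$ act as additive and multiplicative units; $\{0\}$ is absorbing; and distributivity $A(B+C) = AB + AC$ is an immediate corollary of the displayed identity applied to $B + C$ viewed as a combined hypersum.

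For part (3), I would define the induced map by $A \mapsto f(A) := \{f(a) : a \in A\}$. Strictness of $f$ together with induction on $n$ gives $f(a_1 +_F \cdots +_F a_n) = f(a_1) +_{F'} \cdots +_{F'} f(a_n)$ as an equality of subsets of $F'$; this simultaneously shows that $f(A) \in S(F')$ whenever $A \in S(F)$ and that $f(A + B) = f(A) + f(B)$. Multiplicativity of $f$ on $F$ extends at once to $f(A \times B) = f(A) \times f(B)$, and the units $\{0\}, \{1\}$ are manifestly preserved.

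The main obstacle I foresee is the inductive identity underlying part (2): extending binary double-distributivity to arbitrary hypersums demands a careful interleaving of ordinary distributivity, double-distributivity applied elementwise, and interchanges of unions with hyperadditions, because double-distributivity cannot simply be applied to subsets as if they were elements. Once this identity is in hand, parts (1), (3), and the semiring axioms in part (2) are essentially bookkeeping.
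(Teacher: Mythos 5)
Your proposal is correct and follows essentially the same route as the paper: closure of $+$ via associativity, the identity $\bigl(\sum a_i\bigr)\times\bigl(\sum b_j\bigr)=\sum a_ib_j$ as the crux of part (2), and strictness giving $f\bigl(\sum a_i\bigr)=\sum f(a_i)$ for part (3). The only difference is that the paper simply asserts that the key identity ``follows from the doubly-distributive property,'' whereas you supply the induction (interleaving elementwise double-distributivity with the interchange of unions and hyperadditions, taking the unions one variable at a time) that actually justifies it.
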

\begin{proof}
  Suppose $A,B\in S(F)$, so $A = \sum a_i$ and $B= \sum b_j$, where the sums are with respect to the
  hyperaddition in $F$.  Since summation of more than 2 elements in $F$ is defined by the rule
  $a +_F b +_F c = \cup_{x\in a +_F b} x +_F c$, it follows immediately that $+_{\functor(F)}$
  restricts to a binary operation on $S(F)$.

Now suppose $F$ is doubly-distributive.  It follows from the doubly-distributive property that
\[
\left( \sum a_i \right) \times \left( \sum b_j \right ) = \sum a_i b_j
\]
and so $\times_{\functor}$ restricts to a binary operation on $S(F)$.  Clearly, the singletons
$\{0_F\}$ and $\{1_F\}$ become the additive identity and multiplicative identity.  It remains to
show multiplication distributes over addition, and this again follows directly from the doubly-distributive
property since if $A = \sum a_i$, $B = \sum b_j$, and $C= \sum c_k$ are subsets of $F$ then
\begin{align*}
  (A + B)\times C & = \left(\sum a_i + \sum b_j \right) \times \left(\sum c_k\right) \\
& = \sum a_i c_k +\sum b_j c_k\\
&  = ( A\times C) + (B\times C). 
\end{align*}

Given a strict homomorphism of doubly-distributive hyperfields, $f: F \to F'$, the induced mapping
$S(F) \to S(F')$ sends $A$ to $f(A)$.  A priori, $f(A)$ is an element of $\mathcal{P}(F')$, but
since $A =\sum a_i$ and $f$ is strict, $f(A) = \sum f(a_i)$, and so $f(A)$ is indeed an element of
$S(F')$.
\end{proof}

\begin{theorem}\label{reducedfunctortheorem}
  Let $F$ be a doubly-distributive hyperfield. Let $K=S(F)$,
  $K_0=\{A \subseteq S(F) \mid 0_F \in A\}$, $\varepsilon=-1_F$, $0=\{0_F\}$, and $1=\{1_F\}$. Then
  $\reducedfunctor(F) := (K;+,\times;\varepsilon,K_0)$ is a fuzzy ring.  Moreover, there is an
  inclusion $\reducedfunctor(F) \hookrightarrow \functor(F)$ that restricts to an isomorphism on
  units, and this defines a functor $\ddhyperfields \longrightarrow \fuzzystrong$.
\end{theorem}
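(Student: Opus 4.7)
The plan is to verify the fuzzy ring axioms for $\reducedfunctor(F)$ largely by restriction from the already-established fuzzy ring structure on $\functor(F)$ given by Theorem \ref{object}. The preceding lemma ensures that $S(F)$ is closed under both $+$ and $\times$ on $\mathcal{P}^*(F)$ (using double-distributivity), and it contains the distinguished elements $0=\{0_F\}$, $1=\{1_F\}$, and $\varepsilon=\{-1_F\}$, each of which is a hypersum of one term. Moreover, $K_0^{\reducedfunctor(F)}$ is precisely $K_0^{\functor(F)}\cap S(F)$. Consequently axioms (FR0), (FR3), (FR4), (FR6), (FR7) pass verbatim from $\functor(F)$ to $\reducedfunctor(F)$, since none of the elements produced in those verifications escape $S(F)$.

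For (FR1), (FR2), and (FR5), I would first identify the units of $\reducedfunctor(F)$. Since a unit in $\reducedfunctor(F)$ is a fortiori a unit in $\functor(F)$, Lemma \ref{units-are-singletons} forces it to be a singleton $\{x\}$ with $x\in F^\times$; conversely every such singleton lies in $S(F)$ and is invertible. Thus $\reducedfunctor(F)^\times\cong F^\times\cong \functor(F)^\times$ canonically. With this identification, (FR1), (FR2), (FR5) follow literally as in Theorem \ref{object}; in fact, (FR2) is even a consequence of the full semiring distributivity on $S(F)$ recorded in the preceding lemma.

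For the inclusion $\reducedfunctor(F)\hookrightarrow \functor(F)$, I would take the tautological set-theoretic inclusion $S(F)\subseteq \mathcal{P}^*(F)$. It preserves $0$, $1$, $+$, $\times$ by construction, sends null elements to null elements since $K_0^{\reducedfunctor(F)}\subseteq K_0^{\functor(F)}$, and is therefore a strong morphism of fuzzy rings. The matching of unit groups above shows immediately that it restricts to an isomorphism on units.

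Finally, for functoriality, given a strict morphism $f\colon F\to F'$ of doubly-distributive hyperfields, I would set $\reducedfunctor(f)(A)=f(A)$ for $A\in S(F)$. Strictness is the crucial input: if $A=\sum_{i=1}^n a_i$, then $f(A)=\sum_{i=1}^n f(a_i)\in S(F')$ by part (3) of the preceding lemma, whereas a non-strict morphism would only give a containment and $f(A)$ could fall outside $S(F')$. Verifying that $\reducedfunctor(f)$ is a strong morphism of fuzzy rings is then the same argument as Proposition \ref{homomor}, restricted to $S$. The main obstacle throughout is precisely bookkeeping: confirming that strictness is the exact hypothesis needed for the reduced construction to be functorial, and that every instance of the fuzzy ring axioms, when traced through, produces elements that remain within $S(F)$ rather than in the larger $\mathcal{P}^*(F)$.
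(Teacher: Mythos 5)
Your proposal is correct and takes essentially the same route as the paper, whose proof of this theorem is simply the remark that it is ``essentially the same as that of Theorem \ref{object} and Proposition \ref{homomor}''; you have filled in exactly the right details, namely that $S(F)$ is closed under the operations (with double-distributivity needed for $\times$), that the units are again the singletons over $F^\times$, and that strictness of morphisms is what keeps $f(A)$ inside $S(F')$.
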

\begin{proof}
The proof is essentially same as that of Theorem \ref{object} and  Proposition \ref{homomor}.
\end{proof}

As with $\functor$, the functor $\reducedfunctor$ is faithful since the hyperfield hypothesis
implies that all nonzero elements in $F$ are units and the groups of units in $F$,
$\reducedfunctor(F)$, and $\functor(F)$ are all canonically identified.

\begin{example}\label{dddd}
  It is clear that for $S(F)$ to be a semiring, the hyperfield $F$ must be doubly-distributive since
  any semiring satisfies the doubly-distributive property. For example, let
  $(F,\triangledown,\cdot)$ be the triangle hyperfield $\Delta$ (see, \cite[Theorem
  5.B]{viro}). Then one has $2\triangledown 3 = [1,5]$ and hence $(2\triangledown
  3)^2=[1,25]$. However, we have
  \begin{equation}\label{123}
    (2\cdot 2)\triangledown (2\cdot 3) \triangledown (3\cdot 2)\triangledown (3\cdot 3)=4\triangledown 6\triangledown 6 \triangledown 9 = [0,25]. 
  \end{equation}
  Now, for $A=\{2\}$, $B=\{3\}$, $C=\{2\}$, $D=\{3\}$, since $S(F)$ is a semiring, we should have
  \[
  (2+_S3)\times_S (2+_S 3) = (2\times_S 2)+_S (2\times_S 3) +_S (3\times_S 2)+_S (3\times_S 3)
  \]
  But from \eqref{123} we know this is not true.
\end{example}

\begin{remark}
\begin{enumerate}
\item Note that Example \ref{dddd} does not contradict the construction of Theorem \ref{object}
  since for fuzzy rings, one only requires that units distribute, rather than requiring all elements
  to distribute over sums.
\item When $F$ is not doubly-distributive, the set $S(F)$ need not be multiplicatively closed inside
  $\mathcal{P}(F)$.  In fact, it is multiplicatively closed whenever $F$ satisfies the following
  condition: for any $a_1, \ldots ,a_n, b_1, \ldots b_m \in R$ and $n \in \mathbb{N}$
  \begin{equation}\label{condicondi}
    \exists c_1 , \ldots, c_\ell \textrm{ such that } \left(\sum a_i\right) \times \left(\sum b_i\right) = \sum c_i.
  \end{equation}
\item One natural question is whether the condition \eqref{condicondi} is equivalent to the
  doubly-distributive property or not. The doubly-distributive property directly implies the
  condition \eqref{condicondi}, however the converse is not true in general. For example, the
  triangle hyperfield in Example \ref{dddd} satisfies the condition \eqref{condicondi}, but is not
  doubly-distributive.
\end{enumerate}
\end{remark}

We now prove Theorem D from the introduction.
\begin{theorem}
There is a natural isomorphism of fuzzy rings $\reducedfunctor(H_\Gamma) \cong K_\Gamma$.
\end{theorem}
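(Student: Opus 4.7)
The plan is to realize both $\reducedfunctor(H_\Gamma)$ and $K_\Gamma$ as sub-fuzzy-rings of the ambient $\functor(H_\Gamma)$ with the same underlying set, whence the identity on this common collection of subsets of $H_\Gamma$ supplies the required isomorphism. Proposition \ref{inj} already exhibits $K_\Gamma\hookrightarrow\functor(H_\Gamma)$ as a strong morphism, so the operations, the element $\varepsilon$, and the null set of $K_\Gamma$ are precisely the restrictions of the corresponding structures on $\functor(H_\Gamma)$ to the set of singletons and intervals $[0,a]$. By Theorem \ref{reducedfunctortheorem}, $\reducedfunctor(H_\Gamma)$ is by construction a sub-fuzzy-ring of $\functor(H_\Gamma)$ with underlying set $S(H_\Gamma)$, inheriting operations in the same way. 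The entire theorem therefore reduces to the set-theoretic identity $S(H_\Gamma)=\{\{a\}:a\in H_\Gamma\}\cup\{[0,a]:a\in\Gamma\}$.

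I would prove this identity by induction on the number $n$ of summands. Every singleton $\{a\}$ is a one-fold hypersum and every interval $[0,a]$ is the two-fold hypersum $a\oplus a$, which gives the containment $\supseteq$. For the reverse direction I would show inductively that the class of singletons and intervals $[0,a]$ is closed under the operation $A\mapsto A+b$ for $b\in H_\Gamma$. If $A=\{c\}$, then $A+b=\{\max(b,c)\}$ when $b\neq c$ and $A+b=[0,c]$ when $b=c$. If $A=[0,c]$, then $A+b=\bigcup_{x\in[0,c]}x\oplus b$ is computed by splitting on whether $x<b$, $x=b$, or $x>b$, yielding $\{b\}$ when $b>c$ and $[0,c]$ when $b\leq c$. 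In every case the outcome is again a singleton or an interval of the required form, closing the induction.

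The principal delicate step is the interval subcase $b\leq c$, where the three pieces $\{b\}$, $[0,b]$, and $(b,c]$ arising from the three ranges of $x$ must be reassembled into the full interval $[0,c]$; this is elementary but requires care about endpoints. Once the underlying sets are matched, the identification of distinguished data is immediate: since $0\in 1\oplus 1$ in $H_\Gamma$ the additive inverse of $1$ is itself, so $\varepsilon=\{1_\Gamma\}$ on both sides; the zero and unit are the singletons $\{0\}$ and $\{1_\Gamma\}$; and the null subsets consist of the elements of the common underlying set that contain $0$. Naturality in $\Gamma$ is then automatic, since an order-preserving homomorphism $\Gamma\to\Gamma'$ induces a strict hyperfield morphism $H_\Gamma\to H_{\Gamma'}$ whose set-wise pushforward simultaneously implements the functorialities of $K_{(-)}$ and of $\reducedfunctor\circ H_{(-)}$.
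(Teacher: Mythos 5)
Your proof is correct and takes essentially the same route as the paper's: identify the underlying set $S(H_\Gamma)$ with the collection of singletons and intervals $[0,a]$, then check that the addition, multiplication, $\varepsilon$, and null sets coincide. You merely carry out in explicit detail (induction on the number of summands, the endpoint case analysis, and naturality in $\Gamma$) what the paper dismisses as a straightforward check of the definitions.
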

\begin{proof}
  This is just a straightforward checking of the definitions.  The (hyperaddition) sum of two
  elements in $H_\Gamma$ is either a singleton or an interval of the form $[0, x]$, which gives
  exactly the set $K_\Gamma$.  Moreover, the sum in $H_\Gamma$ of two subsets of these types is
  easily seen to agree with the addition operation in $K_\Gamma$.  Thus the underlying set of
  $\reducedfunctor(H_\Gamma)$ is precisely the same as the underlying set of $K_\Gamma$ and the
  addition operations agree.  The multiplication operations clearly also agree.  The null elements
  of $\reducedfunctor(H_\Gamma)$ are the those subsets of $\Gamma\cup\{0\}$ that contain $0$, which
  is precisely the set of null elements of $K_\Gamma$.  In $K_\Gamma$, the element $\epsilon$ is the
  singleton $\{1\}$, and in $\reducedfunctor(H_\Gamma)$, it is the singleton $\{-1\}$, where
  $-1\in H_\Gamma$ is the the additive inverse of the identity element $1 \in \Gamma$, which is
  simply equal to $1$.
\end{proof}

\subsection{Factoring \texorpdfstring{$\reducedfunctor$}{F-bar} through partial demifields}

Let us first recall the definition of partial demifields from \cite[\S 4]{baker2016matroids}.
\begin{definition}
  A \textbf{partial demifield} is a pair $(F,S)$ consisting of a semiring $(S,+_S, \times_S)$ and a
  hyperfield $(F,+_F, \times_F)$
  such that
\begin{enumerate}
 \item $F$ is a submonoid of $S$ with respect to multiplication and $F$ generates $S$, i.e., the smallest
  subsemiring of $S$ which contains $F$ is $S$ itself;
\item For $a,b \in F$, if $a +_S b \in F$ then  $a +_S b \in a +_F b$.
\end{enumerate}
A morphism of partial demifields $(F,S) \to (F',S')$ is a semiring homomorphism $f: S \to S'$ that
restricts to a hyperfield homomorphism $F \to F'$.
\end{definition}

\begin{lemma}\label{partialdemi}
  Let $F$ be a doubly-distributive hyperfield. Then $(F,S(F))$ is a partial demifield.
\end{lemma}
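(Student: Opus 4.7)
The plan is to verify the two defining axioms of a partial demifield for the pair $(F, S(F))$, using the preceding lemma that establishes $(S(F), +, \times)$ as a semiring whenever $F$ is doubly-distributive. Throughout, I would identify $F$ with its image inside $S(F)$ under the singleton map $a \mapsto \{a\}$; this map is multiplicative because $\{a\} \times_{S(F)} \{b\} = \{a \times_F b\}$, and it sends $1_F$ to the multiplicative identity $\{1_F\}$ of $S(F)$, so $F$ sits inside $S(F)$ as a multiplicative submonoid.

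For the generation condition, the definition of $S(F)$ as the collection of all finite hypersums $\sum_{i=1}^n a_i$ of elements $a_i \in F$ delivers the result essentially for free: any such hypersum coincides, by construction of the addition on $\mathcal{P}(F)^*$, with $\{a_1\} +_{S(F)} \cdots +_{S(F)} \{a_n\}$, which expresses an arbitrary element of $S(F)$ as an $S(F)$-sum of elements in the image of $F$. Hence the smallest sub-semiring of $S(F)$ containing $F$ is $S(F)$ itself.

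For the compatibility axiom, I would take $a, b \in F$ and examine $\{a\} +_{S(F)} \{b\}$, which by the definition of $+$ on $S(F)$ equals $\bigcup_{x \in \{a\}, y \in \{b\}} x +_F y = a +_F b$, viewed as a subset of $F$. If this element of $S(F)$ lies in the image of $F$, it must be a singleton $\{c\}$ for some $c \in F$, which forces $a +_F b = \{c\}$ and therefore $c \in a +_F b$ tautologically, as required.

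There is no substantial obstacle here: the argument is bookkeeping, and the only point where care is needed is to keep the two additions -- the hyperaddition on $F$ and the semiring addition on $S(F)$ -- notationally distinct so that the generation and compatibility conditions are visibly immediate from the construction of $S(F)$.
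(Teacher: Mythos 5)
Your proof is correct and follows essentially the same route as the paper: identify $F$ with the singletons in $S(F)$, observe that generation is immediate from the definition of $S(F)$ as the set of finite hypersums, and note that a singleton sum landing in (the image of) $F$ forces the hypersum $a+_F b$ to be a singleton $\{c\}$, whence $c\in a+_F b$. The extra bookkeeping you supply (multiplicativity of the singleton map, the unit) is harmless and matches the paper's implicit identifications.
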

\begin{proof}
  By identifying singletons of $S(F)$ with elements of $F$, one can consider $F$ as a
  (multiplicative) submonoid of $S(F)$. Also, by definition, $F$ generates $S(F)$ since the addition
  of singletons of $S(F)$ agrees with the hyperaddition of $F$. For the compatibility condition, let
  $a,b \in S(F)$ such that $a,b, a+_Sb \in F$. But, this means that $a,b$ are singletons such that
  $a+b$ is single-valued and hence $a+_Sb = a+b$.
\end{proof}

\begin{example}
  Let $\mathbb{S}:=\{-1,0,1\}$ be the hyperfield of signs. Then
  $S(\mathbb{S})=\{-1,0,1,\mathbb{S}\}$. Note that we identify singletons with elements of
  $\mathbb{S}$. One can easily see that the partial demifield $(\mathbb{S},S(\mathbb{S}))$ is
  isomorphic to the partial demifield $\hat{\mathbb{S}}$ which is constructed by Baker in
  \cite[Example 4.5.]{baker2016matroids}.
\end{example}

\begin{proposition}\label{partialdemifielsubcategory}
Let $(F,S)$ be a partial demifield. Suppose that $(F,S)$ satisfies the following condition: 
\begin{equation}\label{addsame}
a_1 +_F a_2 +_F \cdots +_F a_n = a_1 +_S a_2 +_S \cdots +_S a_n, \quad \forall a_i \in F, \quad \forall n \in \mathbb{N}.
\end{equation}
Then $(F,S)$ is isomorphic to the partial demifield $(F,S(F))$ constructed in Lemma \ref{partialdemi}. 
\end{proposition}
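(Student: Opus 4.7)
The plan is to construct an explicit isomorphism of partial demifields $\Phi\colon (F,S)\to (F,S(F))$ that is the identity on the common hyperfield $F$. Since $F$ is a multiplicative submonoid of $S$ which generates $S$ as a semiring, every element $s\in S$ can be written as $s=a_1+_S\cdots+_Sa_n$ with $a_i\in F$ (any monomial in elements of $F$ already lies in $F$ by multiplicative closure). I would then set
\[
\Phi(s):=a_1+_F\cdots+_Fa_n,
\]
viewed as an element of $S(F)$.

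Well-definedness and injectivity both come from the hypothesis \eqref{addsame}. If $a_1+_S\cdots+_Sa_n=b_1+_S\cdots+_Sb_m$ in $S$, applying \eqref{addsame} to each side identifies both with the same datum, forcing the equality $a_1+_F\cdots+_Fa_n=b_1+_F\cdots+_Fb_m$ in $S(F)$; reversing the argument gives injectivity. Surjectivity is automatic: every element of $S(F)$ is by definition a hypersum of elements of $F$, which is the image under $\Phi$ of the corresponding $S$-sum. An inverse $\Psi\colon S(F)\to S$ is defined symmetrically.

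It remains to verify that $\Phi$ respects the semiring structure and restricts to the identity on $F$. Compatibility with addition is essentially concatenation of sums: by Construction \ref{con:op-ext}, the addition on $S(F)$ is the union of pairwise hypersums, which by associativity of hyperaddition coincides with the iterated hypersum of the concatenated list of summands. Compatibility with multiplication uses distributivity in the semiring $S$ to expand $(a_1+_S\cdots+_Sa_n)\times_S(b_1+_S\cdots+_Sb_m)$ as the $S$-sum of the products $a_ib_j\in F$, and then \eqref{addsame} converts this expansion to the corresponding hypersum in $S(F)$; one checks this matches the multiplication on $S(F)$, which on pairs of hypersums is the set-theoretic product. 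The restriction to $F$ is manifestly the identity under the standard identification of elements of $F$ with singletons in $S(F)$.

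The main subtlety is reading \eqref{addsame} itself: the two sides live \emph{a priori} in different ambient sets (a subset of $F$ versus an element of $S$), so the hypothesis must be understood as asserting that the $S$-element $a_1+_S\cdots+_Sa_n$ and the hypersum subset $a_1+_F\cdots+_Fa_n\subseteq F$ carry the same information. Once this identification is pinned down, the assignment $\Phi$ is essentially forced and the remaining verifications reduce to routine bookkeeping using associativity of hyperaddition and distributivity in $S$.
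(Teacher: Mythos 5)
Your proposal is correct and is essentially the paper's argument run in the opposite direction: the paper defines the map $S(F)\to S$, $f_1+_F\cdots+_Ff_n\mapsto f_1+_S\cdots+_Sf_n$, gets well-definedness and injectivity from \eqref{addsame} exactly as you do, and obtains surjectivity from the minimality of $S$ among subsemirings containing $F$ --- the same use of the generation hypothesis you make up front to see that every $s\in S$ is an $S$-sum of elements of $F$. The only point you omit is the preliminary observation that \eqref{addsame} forces $F$ to be doubly-distributive, which is needed for $(F,S(F))$ to be a partial demifield at all (Lemma \ref{partialdemi} assumes double distributivity); this is implicit in your multiplication-compatibility check but should be stated before invoking the target object.
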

\begin{proof}
  First of all, it follows from \eqref{addsame} that $F$ is doubly-distributive and hence $(F,S(F))$
  is indeed a partial demifield. Now, any element $A \in S(F)$ is of the form
  $A=f_1+_Ff_2+_F\cdots +_Ff_n$ for some $f_i \in F$. Consider the the following map:
\[
f: S(F) \longrightarrow S, \quad A \mapsto f_1+_Sf_2+_S\cdots +_Sf_n. 
\] 
It follows from the assumption \eqref{addsame} that $f$ is well defined since if 
\[
A=f_1+_Ff_2+_F\cdots +_Ff_n=g_1+_Fg_2+_F\cdots +_Fg_m \quad \textrm{for some }f_i,g_i \in F,\quad n,m \in \mathbb{N} 
\]
then from the assumption \eqref{addsame} we have 
\[
f_1+_Sf_2+_S\cdots +_Sf_n=g_1+_Sg_2+_S\cdots +_Sg_m.
\]
Furthermore, this also shows that the map $f$ is an injective homomorphism of partial demifields. It
follows that $f(S(F))$ is a sub-semiring of $S$ which contains $F$. However, $S$ is the smallest
semiring which contains $F$. This implies that $f$ should be surjective as well and hence $f$ is an
isomorphism.
\end{proof}


Let $\ddhyperfields$ be the category of doubly-distributive hyperfields with strict homomorphisms
and $\partialdemifields$ be the category of partial demifields. For an object $F$ of
$\ddhyperfields$, we let $\demifunctorA(F)=(F,S(F))$ as in Lemma \ref{partialdemi}. For a morphism
$f:F\longrightarrow F'$ of $\ddhyperfields$, we let
$\demifunctorA(f):=S(f):S(F)\longrightarrow S(F')$ as in Theorem \ref{reducedfunctortheorem}.
\begin{proposition}
  $\demifunctorA$ is a faithful functor from $\ddhyperfields$ to $\partialdemifields$.
\end{proposition}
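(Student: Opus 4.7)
The statement splits naturally into two assertions: that $\demifunctorA$ is a functor, and that it is faithful. I would handle each in turn, leaning heavily on what has already been established in Lemma \ref{partialdemi} and in the lemma preceding Theorem \ref{reducedfunctortheorem}.

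For \emph{functoriality}, the assignment on objects is exactly the content of Lemma \ref{partialdemi}: if $F$ is doubly-distributive, then $(F, S(F))$ is a partial demifield. For the assignment on morphisms, given a strict hyperfield homomorphism $f: F \to F'$, the induced map $S(f): S(F) \to S(F')$ sending $A \mapsto f(A)$ is already known from the lemma preceding Theorem \ref{reducedfunctortheorem} to be a semiring homomorphism. I would then note that $S(f)$ sends singletons to singletons, so under the canonical identification of $F$ with the singletons inside $S(F)$ it restricts to $f$ itself, which is by hypothesis a hyperfield homomorphism. Combined with being a semiring map, this is precisely what it means for $S(f)$ to be a morphism of partial demifields. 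Preservation of identities and compositions is immediate from the fact that $S(f)$ is defined by pointwise application of $f$, i.e.\ $S(\mathrm{id}_F) = \mathrm{id}_{S(F)}$ and $S(g \circ f) = S(g) \circ S(f)$.

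For \emph{faithfulness}, suppose $f, g: F \to F'$ are strict morphisms in $\ddhyperfields$ with $\demifunctorA(f) = \demifunctorA(g)$, meaning the semiring maps $S(f)$ and $S(g)$ coincide on $S(F)$. Restricting this equality to the submonoid of singletons $F \hookrightarrow S(F)$ yields $f = g$ directly. Alternatively, one can appeal to the faithfulness of $\reducedfunctor: \ddhyperfields \to \fuzzystrong$ together with the factorization $\reducedfunctor = \demifunctorB \circ \demifunctorA$ (cf.\ the remark following Theorem \ref{thm:dd-factorization}): faithfulness of a composition forces faithfulness of the first factor.

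\textbf{Main obstacle.} Essentially all of the heavy lifting has been done in the preceding lemmas, so there is no substantive obstacle here, only a single point warranting emphasis: the strictness hypothesis on morphisms in $\ddhyperfields$ is precisely what guarantees that $S(f)$ is additive and lands in $S(F')$, since a merely containment-respecting morphism could fail the identity $f(\sum a_i) = \sum f(a_i)$ needed to make $S(f)$ a semiring map. Once this is observed, the verification of functoriality and faithfulness is formal.
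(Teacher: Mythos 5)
Your proof is correct and follows essentially the same route as the paper: the paper dismisses functoriality as clear (resting implicitly on Lemma \ref{partialdemi} and the lemma preceding Theorem \ref{reducedfunctortheorem}, exactly as you spell out) and proves faithfulness by restricting $\demifunctorA(f)=\demifunctorA(g)$ to the singletons, which is your main argument. Your remark that strictness is what makes $S(f)$ land in $S(F')$ and be additive is a worthwhile point the paper leaves implicit.
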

\begin{proof}
$\demifunctorA$ is clearly a functor and hence we only have to show that for any doubly-distributive hyperfields $A$ and $B$, 
\[
\Hom_{\ddhyperfields}(A,B)\longrightarrow \Hom_{\partialdemifields}(\demifunctorA(A),\demifunctorA(B))
\]
is injective.  Suppose that we have two strict homomorphisms $f, g:A \longrightarrow B$ such that
$\demifunctorA(f)=\demifunctorA(g)$. This, in particular, implies that
$\demifunctorA(f)|_A=\demifunctorA(g)|_B$ and hence $f=g$. This proves that the functor
$\demifunctorA$ is faithful.
\end{proof}

However the functor $\demifunctorA$ is not full as the following example shows. 

\begin{example}\label{f_1notfull}
  Let $\mathbb{K}$ be the Krasner hyperfield and $\mathbb{T}$ be the tropical hyperfield. Note that
  $\mathbb{T}$ is doubly-distributive, in fact, any hyperfield obtained from a linearly ordered
  abelian group (as in \S \ref{linearorderedhypergroup})) is doubly-distributive (see \cite[\S
  5.2]{viro}). One can easily see that there exists no strict homomorphism from $\mathbb{K}$ to
  $\mathbb{T}$. On the other hand, we have many morphisms from the partial demifield
  $(\mathbb{K},\demifunctorA(\mathbb{K}))$ to the partial demifield
  $(\mathbb{T},\demifunctorA(\mathbb{T}))$. For instance, one can define
\[
f:\demifunctorA(\mathbb{K})=\{0,1,\mathbb{K}\} \longrightarrow \demifunctorA(\mathbb{T}), \quad
f(0)=-\infty,\quad f(1)=0, \quad f(\mathbb{K})=\left[-\infty, 0\right]
\]
This shows that the functor $\demifunctorA$ is not full.
\end{example}

In fact, our construction can be considered as a `reduced' version of our previous functor
$\functor$ in $\S \ref{functor}$ in the following sense. We note that for any doubly-distributive
hyperfield $F$, $S(F)$ is a subset of $\mathcal{P}^*(F)$ since by definition $S(F)$ is the set of
all possible sums.

The functor $\reducedfunctor$ is not full. Indeed, one can easily observe that the hyperfields we
presented in Example \ref{notfullexample} are doubly-distributive and hence the same example shows
that $\reducedfunctor$ is not full as well.  Also, the following composition
\[
\ddhyperfields \stackrel{\reducedfunctor}{\longrightarrow} \fuzzystrong  \to \fuzzyweak
\]
is faithful but not full, as the following example shows.

\begin{example}\label{compositionnotfull}
  Let $\mathbb{Q}$ be the field of rational numbers (considered as a doubly-distributive hyperfield)
  and $\mathbb{K}$ be the Krasner hyperfield (which is doubly-distributive as we mentioned
  above). One can easily see that there is no strict homomorphism from $\mathbb{Q}$ to
  $\mathbb{K}$. On the other hand, since $\reducedfunctor(\mathbb{K})$ is the final object in
  $\fuzzyweak$ (since in this case, $\reducedfunctor(\mathbb{K})=\functor(\mathbb{K})$), there
  exists a weak morphism from $\reducedfunctor(\mathbb{Q})$ to $\reducedfunctor(\mathbb{K})$. This
  shows that the above composition is not full.
\end{example}

Let $\mathfrak{C}$ be the full subcategory of the category $\partialdemifields$ of partial
demifields whose objects satisfy the condition given in Proposition
\ref{partialdemifielsubcategory}; then it follows from Proposition \ref{partialdemifielsubcategory}
that $\mathfrak{C}$ is the essential image of the functor $\demifunctorA$.



Now, for each object $P=(F,S)$ in $\mathfrak{C}$, we let
$\demifunctorB(P)=\reducedfunctor(F)$. Also, for each homomorphism $f:P\longrightarrow P'$, we let
$\demifunctorB(f)=\reducedfunctor(f|_F)$.

\begin{proposition}
$\demifunctorB$ is a faithful functor from $\mathfrak{C}$ to $\fuzzystrong$. 
\end{proposition}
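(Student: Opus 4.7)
The plan is to verify functoriality and faithfulness in turn, both of which reduce to properties of $\reducedfunctor$ via the restriction to $F$.

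The key preliminary observation is that for any morphism $f\colon (F,S)\to (F',S')$ in $\mathfrak{C}$, the restriction $f|_F\colon F\to F'$ is a strict homomorphism of doubly-distributive hyperfields. Indeed, by the defining condition \eqref{addsame} of $\mathfrak{C}$ applied to both source and target, for $a,b\in F$ we have $a+_F b=a+_S b$ and $f(a)+_{F'}f(b)=f(a)+_{S'}f(b)$, so since $f$ is a semiring homomorphism,
\[
f(a+_F b)=f(a+_S b)=f(a)+_{S'}f(b)=f(a)+_{F'}f(b).
\]
Consequently $\reducedfunctor(f|_F)$ is a well-defined strong morphism of fuzzy rings by Theorem \ref{reducedfunctortheorem}, and so the assignment $\demifunctorB(f):=\reducedfunctor(f|_F)$ makes sense. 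Functoriality is then immediate: identities are preserved because restriction sends $\mathrm{id}_S$ to $\mathrm{id}_F$ and $\reducedfunctor$ preserves identities, while composition is preserved because $(g\circ f)|_F = g|_{F'}\circ f|_F$ and $\reducedfunctor$ is functorial on $\ddhyperfields$.

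For faithfulness, suppose $f,g\colon (F,S)\to (F',S')$ are morphisms in $\mathfrak{C}$ with $\demifunctorB(f)=\demifunctorB(g)$, i.e., $\reducedfunctor(f|_F)=\reducedfunctor(g|_F)$. Since $\reducedfunctor$ is faithful, $f|_F=g|_F$. By the definition of a partial demifield, $F$ generates $S$ as a semiring, and because $F$ is a submonoid of $S$ under multiplication, every element of $S$ can be expressed as a finite sum $s=a_1+_S\cdots +_S a_n$ with $a_i\in F$. Applying the semiring homomorphisms $f$ and $g$ and using $f|_F=g|_F$ gives
\[
f(s)=f(a_1)+_{S'}\cdots +_{S'}f(a_n)=g(a_1)+_{S'}\cdots +_{S'}g(a_n)=g(s),
\]
so $f=g$.

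The only nontrivial point is the strictness of $f|_F$, which is precisely where the hypothesis that $(F,S)$ and $(F',S')$ lie in $\mathfrak{C}$ (rather than in an arbitrary partial demifield) is used; everything else is formal bookkeeping on top of the faithfulness of $\reducedfunctor$ and the fact that $F$ generates $S$ additively.
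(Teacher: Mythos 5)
Your ``key preliminary observation'' --- that the restriction $f|_F$ of any morphism $f\colon (F,S)\to(F',S')$ in $\mathfrak{C}$ is a \emph{strict} hyperfield homomorphism --- is false, and the paper itself supplies a counterexample. In Example \ref{f_1notfull}, both $(\mathbb{K},S(\mathbb{K}))$ and $(\mathbb{T},S(\mathbb{T}))$ lie in $\mathfrak{C}$, and the displayed map $f$ with $f(\mathbb{K})=[-\infty,0]$ is a morphism of partial demifields; yet its restriction $\mathbb{K}\to\mathbb{T}$ cannot be strict, since (as that example notes) no strict homomorphism $\mathbb{K}\to\mathbb{T}$ exists. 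Concretely, $f(1+_{\mathbb{K}}1)$ in the sense of hyperfield morphisms is the elementwise image $\{f(0),f(1)\}=\{-\infty,0\}$, which is a proper subset of $f(1)+_{\mathbb{T}}f(1)=[-\infty,0]$. The flaw in your chain of equalities is the first step $f(a+_Fb)=f(a+_Sb)$: although condition \eqref{addsame} identifies the subset $a+_Fb\subseteq F$ with the element $a+_Sb\in S$, the left-hand side in the definition of strictness is the \emph{elementwise} image $\{f(c)\mid c\in a+_Fb\}$, whereas $f(a+_Sb)$ is the value of the semiring map $f$ at that single element of $S$; a semiring homomorphism need not act elementwise on such a subset, and the partial demifield axioms only give the containment $\{f(c)\mid c\in a+_Fb\}\subseteq f(a)+_{F'}f(b)$.

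What you have actually proved is that $f(a+_Sb)=f(a)+_{F'}f(b)$ as elements of $S'\cong S(F')$, which is a statement about $f$ on $S$, not about strictness of $f|_F$. The upshot is that $\reducedfunctor(f|_F)$ is not literally defined for all morphisms of $\mathfrak{C}$ (this is a genuine subtlety that the paper's one-line proof also glosses over); to make $\demifunctorB$ well defined on all of the full subcategory $\mathfrak{C}$ one must either take $\demifunctorB(f)$ to be the semiring map $f\colon S\to S'$ itself (and then verify directly that it is a strong morphism of fuzzy rings, which is not addressed in your argument), or else restrict the morphisms of $\mathfrak{C}$ to those in the honest image of $\demifunctorA$. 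Your faithfulness argument --- reducing to $f|_F=g|_F$ via faithfulness of $\reducedfunctor$ and then propagating to all of $S$ because $F$ generates $S$ additively --- is correct and would go through once the well-definedness issue is repaired.
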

\begin{proof}
This is straightforward from the definition and above lemmas.  
\end{proof}

\begin{example}\label{funexample}
  Let $\Gamma$ be a linearly ordered abelian group. As in $\S \ref{review}$, we may assume that
  $\Gamma$ is equipped with the smallest element $-\infty$. Recall that $\Gamma$ can be enriched to
  a hyperfield $H_\Gamma$. As it was pointed out by Baker (see, \cite[Example
  4.4]{baker2016matroids}), $H_\Gamma$ is a demifield and hence a partial demifield. In particular,
  $H_\Gamma$ is an object of the category $\mathfrak{C}$. Now, one can easily see that
  $\demifunctorB(H_\Gamma)=K_{\Gamma}$, where $K_{\Gamma}$ is the fuzzy ring associated to $\Gamma$
  constructed by Dress and Wenzel (cf. \cite{dress1992valuated}, \cite{dress2011algebraic}).
\end{example}

We finish this section by illustrating these functors applied to the hyperfield of signs $\mathbb{S}$. 

\begin{example}
Let $\mathbb{S}=\{-1,0,1\}$ be the hyperfield of signs. We have the following:
\begin{enumerate}
\item
\[
\functor(\mathbb{S})=(K;+,\times;-1,K_0),
\]
where
\[ K=\{0,1,-1,\{0,1\},\{1,-1\},\{0,-1\},\mathbb{S}\}, \quad  K_0=\{0,\{0,1\},\{0,-1\},\mathbb{S}\}. 
\]
\item
\[
\reducedfunctor(\mathbb{S})=(K_{red};+,\times;-1,(K_0)_{red}),\textrm{ where }K_{red}=\{0,1,-1,\mathbb{S}\} \textrm{ and } (K_0)_{red}=\{0,\mathbb{S}\}. 
\]
\item
\[
\demifunctorA(\mathbb{S})=(\mathbb{S},S(\mathbb{S})),\quad S(\mathbb{S})=\{0,1,-1,\mathbb{S}\}. 
\]
\end{enumerate}
\end{example}

\section{From matroids over hyperfields to matroids over fuzzy rings, and back, via the functors \texorpdfstring{$\functor$}{F} and \texorpdfstring{$\invfunctor$}{G}}\label{sec:matroid-axioms}

In this section we investigate how the functors $\functor$ and $\invfunctor$ relates matroids over hyperfields, as
introduced by Baker \cite{baker2016matroids}, and matroids over fuzzy rings, as introduced by Dress
\cite{dress1986duality}.  There are numerous cryptomorphic axiom sets for ordinary matroids, and
many of these generalize to matroids with either hyperring or fuzzy ring coefficients.  We will only focus on the \emph{Grassmann-Pl\"ucker axioms} and show that both approaches are in fact equivalent via the functors $\functor$ and $\invfunctor$. 

\subsection{Grassmann-Pl\"ucker function axioms for matroids with coefficients}

We first recall the definition of a Grassmann-Pl\"{u}cker function of rank $r$ on a finite set
$E$ with coefficients in a hyperfield or a fuzzy ring.  

\begin{definition}[{Hyperfields Grassmann-Pl\"ucker functions \cite[Definition 3.9]{baker2016matroids}}]
  \label{hypgrass}
  Let $(F, \oplus, \odot)$ be a hyperfield.  A Grassmann-Pl\"{u}cker function of rank $r$ on a
  finite set $E$ with coefficients in $F$ is a function $\varphi:E^r \longrightarrow F$ such that:
\begin{enumerate}
\item[(GPH1)] $\varphi$ is not identically zero.
\item[(GPH2)] $\varphi$ is alternating.
\item[(GPH3)] (Grassmann-Pl\"{u}cker relations) For any two subsets $\{x_1,...x_{r+1}\}$ and
  $\{y_1,...,y_{r-1}\}$ of E, we have
\begin{equation}\label{hyppluc}
0_F \in \bigoplus_{k=1}^{r+1}(-1)^k\varphi(x_1,x_2,...,\hat{x_k},...,x_{r+1})\odot \varphi(x_k,y_1,...,y_{r-1}).
\end{equation}
\end{enumerate}
\end{definition}

\begin{definition}[{Fuzzy rings Grassmann-Pl\"ucker functions \cite[Definition
  4.1]{dress1991grassmann}}] \label{fuzzygrass} Let $K$ be a fuzzy ring with the group of multiplicative
  units $K^\times$.  A Grassmann-Pl\"{u}cker function of rank $r$ on a finite set $E$ with coefficients
  in $K$ is a function $\varphi:E^r \longrightarrow K^\times\cup \{0\}$ such that:
\begin{enumerate}
\item[(GPF1)]
$\varphi$ is not identically zero.
\item[(GPF2)]
$\varphi$ is $\varepsilon$-alternating, i.e, for any $x_1,...,x_r \in E$ and an odd permutation $\sigma \in S_r$, we have
\begin{equation}
\varphi(x_{\sigma(1)},...,x_{\sigma(r)})=\varepsilon\varphi(x_1,...,x_r),
\end{equation} 
and if the number of distinct elements in $\{x_1,...,x_r\}$ is smaller than $r$ then $\varphi(x_1,...,x_r)=0$.
\item[(GPF3)] (Grassmann-Pl\"{u}cker relations) For any two subsets $\{x_1,...x_{r+1}\}$ and $\{y_1,...,y_{r-1}\}$ of E, we have
\begin{equation}\label{fuzzypluc}
\sum_{k=1}^{r+1}\varepsilon^k\varphi(x_1,x_2,...,\hat{x_k},...,x_{r+1})\times \varphi(x_k,y_1,...,y_{r-1}) \in K_0.
\end{equation}
\end{enumerate}
\end{definition}

In both the hyperring case and the fuzzy ring case, we will be interested in equivalence classes of
Grassmann-Pl\"ucker functions, where two functions are equivalent if one is obtained by the other by
multiplication by a unit.

\subsection{Equivalence of the matroid theories}

Let $E$ be a non-empty finite set and $r$ be a positive integer. Let $F$ be a hyperfield and
$\functor(F)$ the associated fuzzy ring.  Recall that there is an identification of groups of
multiplicative units $F^\times \cong \functor(F)^\times$ given by sending $x$ to the singleton $\{x\}$.
Recall that there is also an identification $K^\times\simeq\mathcal G(K)^\times$ for every fuzzy ring $K$.

\begin{proposition}\label{onetoone}
Let $E$ be a finite set, $F$ be a hyperfield, $K$ be a field-like fuzzy ring, and $r$ a positive integer. 
\begin{enumerate}
\item
A function $\varphi: E^r \to F^\times \cong \functor(F)^\times$ is a Grassmann-Pl\"ucker function in the fuzzy ring sense (Definition \ref{fuzzygrass}) if and
  only if it is a Grassmann-Pl\"ucker function in the hyperring sense (Definition \ref{hypgrass}).
  If the hyperfield $F$ is  doubly-distributive then the same is true with $\reducedfunctor$ in
  place of $\functor$.
\item
A function $\varphi: E^r \to K^\times \cong \invfunctor(K)^\times$ is a Grassmann-Pl\"ucker function in the fuzzy ring sense (Definition \ref{fuzzygrass}) if and
  only if it is a Grassmann-Pl\"ucker function in the hyperring sense (Definition \ref{hypgrass}).
\end{enumerate}
\end{proposition}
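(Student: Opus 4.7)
The plan is to verify, axiom by axiom, that under the canonical identification $F^\times\cong\functor(F)^\times$ (or $F^\times\cong\reducedfunctor(F)^\times$, or $K^\times\cong\functor(\invfunctor(K))^\times$), the hyperfield Grassmann-Pl\"ucker axioms (GPH1--3) translate verbatim into the fuzzy ring Grassmann-Pl\"ucker axioms (GPF1--3), and vice-versa. The key observation is that, by Lemma \ref{units-are-singletons}, units of $\functor(F)$ are exactly the singletons $\{a\}$ with $a\in F^\times$, and by construction the element $\varepsilon$ of the fuzzy ring $\functor(F)$ is the singleton $\{-1_F\}$. So the identification sends $-1_F$ to $\varepsilon$, and sends products in $F$ to products in $\functor(F)$.

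First I would dispense with (GPH1)/(GPF1), which are identical conditions under the identification of units, and then with (GPH2)/(GPF2): an alternating function on $F^\times\cup\{0\}$ (with the additive inverse $-1_F$ of $F$) becomes $\varepsilon$-alternating on $\functor(F)^\times\cup\{0\}$ precisely because $-1_F\mapsto\varepsilon$, and the vanishing condition on repeated entries is common to both definitions. The substantive step is to match (GPH3) with (GPF3). For any $c_1,\dots,c_N\in F^\times\cup\{0\}$, treat each $c_i$ as a singleton $\{c_i\}\in\functor(F)$; then by Construction \ref{con:op-ext} the sum $\sum\{c_i\}$ in $\functor(F)$ is literally the hyperaddition sum $\bigoplus c_i\subset F$, and this sum lies in $\functor(F)_0$ if and only if it contains $0_F$. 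Applying this with $c_k=(-1)^k\varphi(x_1,\dots,\hat x_k,\dots,x_{r+1})\odot\varphi(x_k,y_1,\dots,y_{r-1})$ and using $-1_F\leftrightarrow\varepsilon$ converts \eqref{hyppluc} into \eqref{fuzzypluc} and back.

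For the $\reducedfunctor$ version in (1), I would simply note that each $c_i$ is a singleton of $S(F)\subset\mathcal P^*(F)$ and that the inclusion $\reducedfunctor(F)\hookrightarrow\functor(F)$ of Theorem \ref{reducedfunctortheorem} is a strong morphism restricting to an isomorphism on units and preserving the null sets in the sense that an element of $\reducedfunctor(F)$ lies in $\reducedfunctor(F)_0$ iff its image lies in $\functor(F)_0$; so the equivalence of (GPH3) and (GPF3) transfers from $\functor(F)$ to $\reducedfunctor(F)$ without change. For part (2), I would apply (1) to the hyperfield $\invfunctor(K)$: by Lemma \ref{lem:iso2} we have a canonical weak isomorphism $K\cong\functor(\invfunctor(K))$, which induces a bijection on units and preserves null sums of units, so $\varphi:E^r\to K^\times$ is a fuzzy ring GP function iff the same function, viewed via the identification with $\invfunctor(K)^\times$, is a hyperfield GP function.

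The main obstacle I expect is purely notational: checking that signs, products, and the passage between singletons and elements all commute with the identifications, and in particular that the strong morphism $\reducedfunctor(F)\hookrightarrow\functor(F)$ does not disturb the membership test for $K_0$ when applied to sums of singletons. Once that bookkeeping is done cleanly, the proposition follows directly from the definitions with no additional combinatorial input.
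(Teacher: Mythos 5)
Your proposal is correct and follows essentially the same route as the paper: conditions (GPH1)/(GPF1) and (GPH2)/(GPF2) match under the identification of units with $-1_F\mapsto\varepsilon$, and the equivalence of (GPH3) and (GPF3) is immediate because a sum of singletons in $\functor(F)$ is exactly the hyperaddition sum in $F$ and is null precisely when it contains $0_F$; the paper likewise transfers the argument verbatim to $\reducedfunctor$ and $\invfunctor$. Your write-up is in fact more explicit than the paper's on the $\reducedfunctor$ and $\invfunctor$ cases, but no new idea is involved.
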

\begin{proof}
  One sees that conditions (GPH1) and (GPH2) of Definition \ref{hypgrass} are equivalent (GPF1) and
  (GPF2) in Definition \ref{fuzzygrass} since $\varepsilon =-1_F$.  All that remains is to verify
  that (GPF3) and (GPH3) are equivalent, but this is immediately true since, by construction, the set of null elements in $\functor(F)$
  is
  \[
  \functor(F)_0=\{A \subseteq F \mid 0_F \in A\}.
  \]
  The same argument works verbatim for $\reducedfunctor$ and $\invfunctor$. 
\end{proof}

%
%

\begin{example}
  Let $\mathbb{K}$ be the Krasner hyperfield.  A matroid over $\mathbb{K}$ is the same thing as
  a matroid by \cite[\S 3]{baker2016matroids}. A matroid with coefficients in the fuzzy ring
  $\functor(\mathbb{K})$ is also the same thing as a matroid by \cite[\S 1.3]{dress1986duality}.
\end{example}

\begin{example}\label{sign}
  Let $\mathbb{S}$ be the hyperfield of signs. A matroid over $\mathbb{S}$ is the same thing as
  an oriented matroid (see, \cite[\S 3]{baker2016matroids}). Let $\functor(\mathbb{S})$ be the
  fuzzy ring associated to $\mathbb{S}$. Then $\functor(\mathbb{S})$ contains a sub-fuzzy
  ring $K:=\mathbb{R}\sslash \mathbb{R}^+$ (see \cite[\S 6]{dress1991grassmann}). In
  \cite{dress1991grassmann}, the authors showed that a matroid with coefficients in $K$ is the same
  thing as an oriented matroid. But, the proof only depends on Grassmann-Pl\"{u}cker functions on $E$
  with coefficients in $K$. However, since $K^\times=\functor(\mathbb{S})^\times$, the same proof shows
  that a matroid with coefficients in $\functor(\mathbb{S})$ is the same thing as an oriented matroid.
\end{example}

For a totally ordered abelian group $\Gamma$, Dress and Wenzel associated the fuzzy ring $K_{\Gamma}$
and proved that a valuated matroid (with a value group $\Gamma$) is the same thing as a matroid with
coefficients in $K_\Gamma$. For more details, see \cite{dress1992valuated} or $\S \ref{review}$.
\begin{example}
  Let $\mathbb{T}$ be the tropical hyperfield. A matroid over $\mathbb{T}$ is the same thing as a
  valuated matroid (see, \cite[\S 3]{baker2016matroids}). Let $\functor(\mathbb{T})$ be the
  fuzzy hyperring associated to $\mathbb{T}$. It follows from Proposition \ref{inj} that
  $K_\mathbb{R}$ is a sub-fuzzy ring of $\functor(\mathbb{T})$. Furthermore, we have
  $K_\mathbb{R}^\times=\functor(\mathbb{T})^\times$. Similar to Example \ref{sign}, one can see that a
  matroid with coefficients in $\functor(\mathbb{T})$ is the same thing as a matroid with
  coefficients in $K_\mathbb{R}$ and hence a valuated matroid.
\end{example}

\begin{example}
  Let $\mathbb{P}$ be the phase hyperfield. One can now similarly confirm that a matroid with
  coefficient in $\functor(\mathbb{P})$ is a complex matroid. This follows from the fact that a
  matroid with coefficients in the fuzzy ring $\mathbb{C}\sslash \mathbb{R}^+$ is a complex matroid and $\mathbb{C}\sslash \mathbb{R}^+$ is
  the sub-fuzzy ring of $\functor(\mathbb{P})$ such that
  $(\mathbb{C}\sslash \mathbb{R}^+)^\times=\functor(\mathbb{P})^\times$.
\end{example}

\bibliography{fuzzy-hyperring}
\bibliographystyle{alpha}

\end{document}